\newcommand{\sg}{\sigma}
\def\inv{\operatorname{inv}}
\def\maj{\operatorname{maj}}
\def\minimaj{\operatorname{minimaj}}
\newcommand{\qbinom}[2] { {#1 \brack #2}_{q} }
\newcommand{\qint}[1] { [#1]_{q} }
\newcommand{\qtint}[1] { [#1]_{q,t} }
\newcommand{\osp}[2] {\mathcal{OP}_{#1, #2}}
\def\S{\mathfrak{S}}
\def\multiset#1#2{\ensuremath{\left(\kern-.3em\left(\genfrac{}{}{0pt}{}{#1}{#2}\right)\kern-.3em\right)}}
\def\Dinv{\operatorname{Dinv}}
\def\dinv{\operatorname{dinv}}
\def\area{\operatorname{area}}
\newcommand{\ldyck}{\mathcal{LD}}
\newcommand{\pldyck}{\mathcal{LD}^{\operatorname{Part}}}
\newcommand{\dense}{\operatorname{Dense}}
\def\lab{\el}
\def\touch{\operatorname{touch}}
\newcommand{\dyck} { \mathcal{D}}
\def\rise{\operatorname{Rise}}
\def\fall{\operatorname{Fall}}
\def\val{\operatorname{Val}}
\def\risepoly{\rise}
\def\valpoly{\val}
\def\comp{\operatorname{comp}}
\newcommand{\diag} { \operatorname{Diag}}
\def\height{h}
\def\col{c}
\def\hinv{\operatorname{hdinv}}
\def\winv{\operatorname{wdinv}}
\def\dinvi{d}
\def\areai{a}
\newcommand{\stack}{\operatorname{Stack}}
\def\llt{LLT}
\def\word{w}
\def\cat{\operatorname{Cat}}
\def\catmod{\cat^{\prime}}
\def\bi{b}
\def\cpoly{C}
\def\el{\ell}
\def\touch{\operatorname{touch}}
\newtheorem{lemma}{Lemma}[section]
\newtheorem{prop}{Proposition}[section]
\newtheorem{cor}{Corollary}[section]
\newtheorem{thm}{Theorem}[section]
\newtheorem{conj}{Conjecture}[section]
\newtheorem{prob}{Problem}[section]
\author{J.\ Haglund}
\address{Department of Mathematics, University of Pennsylvania \\ Philadelphia, PA 19104}
\email{\texttt{jhaglund@math.upenn.edu}}
\thanks{The first author was partially supported by NSF grant DMS-1200296.}
\author{J.\ B.\ Remmel}
\address{Department of Mathematics, UC San Diego \\ La Jolla, CA 92093}
\email{\texttt{jremmel@math.ucsd.edu}}
\author{A.\ T.\ Wilson}
\address{Department of Mathematics, University of Pennsylvania \\ Philadelphia, PA 19104}
\email{\texttt{andwils@math.upenn.edu}}
\thanks{The third author was supported by a DoD National Defense Science and Engineering Graduate Fellowship and an NSF Mathematical Sciences Postdoctoral Research Fellowship. }
\title{The Delta Conjecture}
\begin{document}
\begin{abstract}
We conjecture two combinatorial interpretations for the symmetric function $\Delta_{e_k} e_n$, where $\Delta_f$ is an eigenoperator for the modified Macdonald polynomials defined by Bergeron, Garsia, Haiman, and Tesler. Both interpretations can be seen as generalizations of the Shuffle Conjecture of Haglund, Haiman, Remmel, Loehr, and Ulyanov, which was proved recently by Carlsson and Mellit. We show how previous work of the third author on Tesler matrices and ordered set partitions can be used to verify several cases of our conjectures. Furthermore, we use a reciprocity identity and LLT polynomials to prove another case. Finally, we show how our conjectures inspire 4-variable generalizations of the Catalan numbers, extending work of Garsia, Haiman, and the first author.
\end{abstract}
\maketitle

\tableofcontents

\section{Introduction}
\label{sec:intro}
While working towards a proof of the Schur positivity of Macdonald polynomials, Garsia and Haiman discovered the module of diagonal harmonics, an $\S_n$-module that captures many of the properties of Macdonald polynomials. In \cite{delta}, Haiman proved that the Frobenius characteristic of the diagonal harmonics could be written as $\nabla e_n$ or $\Delta_{e_n} e_n$ for certain symmetric function operators $\nabla$ and $\Delta_f$ which are eigenoperators of Macdonald polynomials. Building on this work, Haiman, Loehr, Ulyanov, and the first two authors proposed a connection between $\nabla e_n$ and parking functions which has come to be known as the Shuffle Conjecture \cite{shuffle}. The goal of this paper is to state and support two versions of a generalization of the Shuffle Conjecture in which $\Delta_{e_n} e_n$ is replaced by $\Delta_{e_k} e_n$ for an integer $1 \leq k \leq n$. We will also see how our generalizations tie together a wide variety of algebraic and combinatorial objects, such as parking functions, ordered set partitions, generalizations of Tesler matrices, and LLT polynomials. In this section, we provide the necessary notation and then state our conjecture.

\begin{figure}
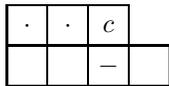

\begin{displaymath}
\begin{ytableau}
\cdot 	& \cdot	& c  \\
\phantom{\cdot}	&  \phantom{\cdot} 	&  -	&  \phantom{\cdot}
\end{ytableau}
\end{displaymath}
\caption{This is the Young diagram (in French notation) of the partition $(4,3)$. The cell $c$ has $a^{\prime}(c) = 2$ (represented by dots) and $\el^{\prime}(c) = 1$ (represented by dashes).}
\label{fig:arm-leg}
\end{figure}

Let $\Lambda$ denote the ring of symmetric functions with coefficients in $\mathbb{Q}(q,t)$. The sets $\{e_{\mu} : \mu \vdash n\}$ and $\{\tilde{H}_{\mu} : \mu \vdash n\}$ are the elementary and {(modified) Macdonald} symmetric function bases for $\Lambda^{(n)}$, the elements of $\Lambda$ that are homogeneous of degree $n$. Given a partition $\mu \vdash n$ and a cell $c$ in the Young diagram of $\mu$ (drawn in French notation) we set $a^{\prime}(c)$ and $\el^{\prime}(c)$ to be the number of cells in $\mu$ that are strictly to the left and strictly below $c$ in $\mu$, respectively. We define 
\begin{align}
&B_{\mu}(q,t) = \sum_{c \in \mu} q^{a^{\prime}(c)} t^{\el^{\prime}(c)} \ \ \ \ \ \text{and} \ \ \ \  \ T_{\mu}(q,t) = \prod_{c \in \mu} q^{a^{\prime}(c)} t^{\el^{\prime}(c)} .
\end{align} 
Given any symmetric function $f \in \Lambda$, we define operators $\Delta_f$ and $\Delta^{\prime}_f$ on $\Lambda$ by their action on the Macdonald basis:
\begin{align}
&\Delta_f \tilde{H}_{\mu} = f[B_{\mu}(q,t)] \tilde{H}_{\mu} \ \ \ \ \text{and} \ \ \ \ 
\Delta^{\prime}_f  \tilde{H}_{\mu} = f[B_{\mu}(q,t)-1] \tilde{H}_{\mu}.
\end{align}
Here, we have used the notation that, for a symmetric function $f$ and a sum $A = a_1 + \ldots + a_N$ of monic monomials, $f[A]$ is equal to the specialization of $f$ at $x_1=a_1, \ldots, x_N=a_N$, where the remaining variables are set equal to zero. We also set $\nabla = \Delta_{e_n}$ as an operator on $\Lambda^{(n)}$. 

Our goal is to conjecture combinatorial interpretations for $\Delta_{e_k} e_n$ and $\Delta^{\prime}_{e_k} e_n$. Note that, by definition, for any $1 \leq k \leq n$
\begin{align}
\Delta_{e_k} e_n = \Delta^{\prime}_{e_k + e_{k-1}} e_n = \Delta^{\prime}_{e_k} e_n + \Delta^{\prime}_{e_{k-1}} e_n .
\end{align}
Furthermore, for any $k > n$, $\Delta_{e_k} e_n = \Delta^{\prime}_{e_{k-1}} e_n = 0$. Therefore $\Delta_{e_n} e_n = \Delta^{\prime}_{e_{n-1}} e_n$. 

To state our conjectures, we consider parking functions as labeled Dyck paths. A \emph{Dyck path} of order $n$ is a lattice path from $(0,0)$ to $(n,n)$ consisting of north and east steps that remains weakly above the line $y=x$, which is sometimes called the diagonal, main diagonal, or 0-diagonal. To obtain a \emph{labeled Dyck path}, we label the north steps of a Dyck path with (not necessarily unique) positive integers such that the labels strictly increase while ascending each column. We denote the Dyck paths and labeled Dyck paths of order $n$ by $\dyck_n$ and $\ldyck_n$, respectively. Often, we will use the notation $D(P)$ to denote the underlying Dyck path of a labeled Dyck path $P$. Labeled Dyck paths are sometimes called (word) parking functions, since there is a classical bijection between the two classes of objects.

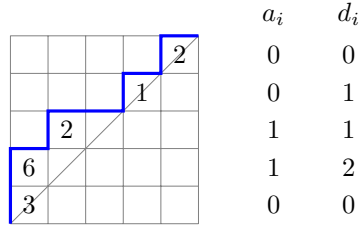
\begin{figure}
\begin{tikzpicture}
\draw[step=0.5cm, gray, very thin] (0.999,0) grid (3.5,2.5);
\draw[ gray, very thin] (1,0) -- (3.5,2.5);
\draw[blue, very thick] (1,0) -- (1,1) -- (1.5,1) -- (1.5,1.5) -- (2.5,1.5) -- (2.5, 2) -- (3, 2) -- (3,2.5) -- (3.5,2.5);
 
\node at (1.25,0.25) {3};
\node at (1.25,0.75) {6};
\node at (1.75,1.25) {2};
\node at (2.75,1.75) {1};
\node at (3.25,2.25) {2};

\node at (4.5,2.75) {$\areai_i$};
\node at (4.5,0.25) {0};
\node at (4.5,0.75) {1}; 
\node at (4.5,1.25) {1};
\node at (4.5,1.75) {0};
\node at (4.5,2.25) {0};

\node at (5.5,2.795) {$\dinvi_i$};
\node at (5.5,0.25) {0};
\node at (5.5,0.75) {2};
\node at (5.5,1.25) {1};
\node at (5.5,1.75) {1};
\node at (5.5,2.25) {0};

\end{tikzpicture}
\caption{A sample labeled Dyck path $P \in \ldyck_5$ with $\area(P) = 2$, $\dinv(P) = 4$, $\comp(P) = \{1,2,1,1\}$, and $\val(P) = \{4,5\}$. }
\label{fig:pf}
\end{figure}

Given a Dyck path $D \in \dyck_n$, we number the rows of $D$ with $1,2,\ldots,n$ from bottom to top. Then, for each row $i$, we set the \emph{area} of the row $i$, written $\areai_i(D)$, to be the number of full squares between $P$ and the diagonal. A labeled Dyck path $P$ inherits the values $\areai_i(P)$ from its underlying Dyck path $D(P)$. We also set 
\begin{align}
\dinvi_{i}(P) &= | \{i < j \leq n : \areai_i(P) = \areai_j(P), \lab_i(P) < \lab_j(P)  \} |  \\
\nonumber	&+ | \{i < j \leq n: \areai_i(P) = \areai_j(P) + 1, \lab_i(P) > \lab_j(P) \} | .
\end{align}
where $\lab_i(P)$ is the label in the $i$th row of $P$. These are the primary and secondary diagonal inversions beginning in row $i$, respectively. The area and dinv statistics are defined by $\area(P) = \sum_{i=1}^{n} \areai_i(P)$ and $\dinv(P) = \sum_{i=1}^{n} \dinvi_i(P)$. 

The \emph{contractible valleys} of $P$ are 
\begin{align}
\val(P) &= \{2 \leq i \leq n : \areai_i(P) < \areai_{i-1}(P) \} \\
	&\cup \{2 \leq i \leq n : \areai_i(P) = \areai_{i-1}(P), \lab_i(P) > \lab_{i-1}(P) \} .
\end{align}
Visually, these are the rows $i$ that are immediately preceded by an east step and, if we were to remove this east step and shift everything beyond it one step to the west, the resulting labeled path would still have increasing labels in its columns. For the parking function depicted in Figure \ref{fig:pf}, 3 is not a contractible valley because removing the east step that starts row 3 would result in a 2 above a 6, so that column's labels would no longer increase from bottom to top. However, rows 4 and 5 are contractible valleys. Finally, by $x^P$ we mean the monomial $\prod_{i=1}^{n} x_{\lab_i(P)}$ and for any polynomial $f(z)$ we use $f(z)|_{z^k}$ to denote the coefficient of $z^k$ in $f$.

With these definitions in hand, we can state our main conjecture, which we call the Delta Conjecture. Sometimes we will refer to \eqref{rise-intro} as the Rise Version and \eqref{valley-intro} as the Valley Version of the Delta Conjecture. 

\begin{conj}[Delta Conjecture]
\label{conj:delta}
For any integers $n > k \geq 0$,
\begin{align}
\label{rise-intro}
 \Delta^{\prime}_{e_k} e_n &= \left. \sum_{P \in \ldyck_{n}} q^{\dinv(P)} t^{\area(P)} \prod_{i : \, \areai_i(P) > \areai_{i-1}(P)} \left( 1 + z / t^{\areai_i(P)} \right) x^P  \right|_{z^{n-k-1}}  \\
 \label{valley-intro}
&= \left. \sum_{P \in \ldyck_{n}} q^{\dinv(P)} t^{\area(P)} \prod_{i \in \val(P)} \left(1 + z / q^{\dinvi_{i}(P) + 1} \right) x^P \right|_{z^{n-k-1}} . 
\end{align}
Equivalently, we can replace the left-hand side with $\Delta_{e_k} e_n$ for integers $n \geq k \geq 0$, multiply both right-hand sides by $(1+z)$, and then take the coefficient  of $z^{n-k}$. 
\end{conj}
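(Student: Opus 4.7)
The plan is to attempt to extend the compositional framework used by Carlsson and Mellit in their proof of the Shuffle Theorem, which corresponds to the case $k = n-1$ of the Delta Conjecture. Since any successful proof must contain the Shuffle Theorem as a special case, working inside the Dyck path / shuffle algebra seems unavoidable, but enhanced to track the auxiliary parameter $z$ that records selected rises in \eqref{rise-intro} or contractible valleys in \eqref{valley-intro}.

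As a preliminary, I would first establish that the two right-hand sides of \eqref{rise-intro} and \eqref{valley-intro} agree. The natural route is a statistic-preserving involution on $\ldyck_n$ that swaps the contribution of selected rises with that of contractible valleys; since rises carry a factor depending on $t^{\areai_i}$ while valleys carry a factor depending on $q^{\dinvi_i + 1}$, such an involution should interact with the zeta map that exchanges $\area$ and $\dinv$ on ordinary Dyck paths. Failing a direct bijection, I would try to verify the equality at the level of the plethystic expansion into monomial or Schur functions, using the known expansions of $\Delta'_{e_k} e_n$ on small examples as a guide.

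Next, I would formulate a compositional refinement: group the labeled Dyck paths on each side by the composition $\alpha \models n$ recording run lengths between diagonal touches, and conjecturally match each summand with the application of a specific word in the Dyck path algebra operators to an expression involving $e_{n-k}$ and $e_{k}$. This refinement must specialize to the Haglund--Morse--Zabrocki Compositional Shuffle Conjecture when $k = n-1$. The correct compositional identity for $\Delta'_{e_k} e_n$ can be guessed by expanding low-rank cases and matching them against operator words in the shuffle algebra, then attempting to verify the guess using the commutation relations already established in the Carlsson--Mellit framework.

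The principal obstacle, and the reason I expect this route to be genuinely hard, is that the parameter $z$ is transverse to the grading underlying the Carlsson--Mellit argument: their identities are homogeneous in each shuffle-algebra variable, whereas the rise and valley factors produce a nonhomogeneous polynomial in $z$ whose individual coefficients must match $\Delta'_{e_{n-k-1}} e_n$ for every $0 \leq k \leq n-1$ simultaneously. One would need either to introduce a one-parameter family of operators interpolating the Carlsson--Mellit operators and prove $z$-deformed versions of their commutation relations, or to isolate each $z^j$ coefficient combinatorially before invoking the algebra. The partial results of the paper, obtained via Tesler matrices, ordered set partitions, and a reciprocity identity together with LLT polynomials, supply crucial boundary data at $q = 0$, $t = 0$, and Hilbert-series specializations, and any proposed compositional refinement should be sanity-checked against these before committing to a full proof attempt.
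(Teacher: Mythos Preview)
The statement you are trying to prove is a \emph{conjecture}: the paper does not prove it, and hence there is no ``paper's own proof'' to compare against. What the paper does is verify special cases (the $q=0$, $t=0$, $q=1$ cases at the level of $\langle\cdot,p_{1^n}\rangle$; the $t=1/q$ case paired with $e_{n-d}h_d$ or $h_{n-d}h_d$; and the full $k=1$ case of the Rise Version). Your write-up is a research outline, not a proof, and you explicitly acknowledge this; so there is no mathematical error to flag in the usual sense, but there are concrete reasons why the specific plan you sketch runs into known obstructions.

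First, your preliminary step of showing \eqref{rise-intro} $=$ \eqref{valley-intro} by a zeta-map style involution is more delicate than you suggest. The zeta map exchanges $\area$ and $\dinv$ on unlabeled Dyck paths, but no labeled analogue with the required properties is known; indeed the paper poses as an open problem even the \emph{symmetry} of $\valpoly_{n,k}(x;q,t)$ as a function of the $x$ variables (Section~\ref{sec:final}), noting that fixing the underlying Dyck path does not give a symmetric function. So the Valley side resists LLT-type arguments, and a direct Rise/Valley bijection would already be a major result.

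Second, your proposed compositional refinement in the style of Haglund--Morse--Zabrocki is explicitly addressed in the paper and found to fail in the most naive form: at the end of Subsection~\ref{ssec:cat} the authors observe that replacing $e_n$ by $E_{n,r}$ or $C_\alpha$ in $\Delta'_{e_k} e_n$ need not produce a symmetric function with polynomial $q,t$-coefficients. Thus the Carlsson--Mellit machinery does not transfer by simply inserting the auxiliary $z$ into their operator words; any compositional identity would have to live on a different object (the paper's own Conjecture~\ref{conj:comp-cat} puts the composition on the Catalan side with an extra $\Delta_{h_\ell}$). Your diagnosis that a $z$-deformation of the Carlsson--Mellit operators would be needed is consistent with the paper's closing remarks, but neither you nor the paper supplies one.
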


Most of the remainder of the paper is devoted to this conjecture. Several cases of the Delta Conjecture have been proved in existing work, usually under different guises. We summarize the current status of this progress in Figure \ref{fig:status}. We establish the necessary background in Section \ref{sec:background}. In Section \ref{sec:comb}, we define several classes of combinatorial objects and use these objects to give alternate formulations of the Delta Conjecture. Section \ref{sec:osp} connects previous work of the second and third authors on ordered set partitions and Tesler matrices in \cite{rw, tesler-wilson} to cases of the Delta Conjecture. Section \ref{sec:t=1/q} provides a plethystic formula for $\Delta_{e_k} e_n$ at $t=1/q$ and uses a result of Garsia, Leven, Wallach, and Xin \cite{rational-t=1/q} to prove Schur positivity in this case. In Section \ref{sec:k=1}, we use a reciprocity identity and LLT polynomials to prove the $k=1$ case of \eqref{rise-intro}. Section \ref{sec:extensions} contains a variation of the Delta Conjecture that involves new 4-variable Catalan polynomials. Finally, we use Section \ref{sec:final} to outline some of the major open problems related to the Delta Conjecture. In particular, Carlsson and Mellit \cite{carlsson-mellit} have recently announced a proof of the Shuffle Conjecture, which is equivalent to the $k=n-1$ case of the Delta Conjecture. We briefly discuss how their ideas might be used in our setting.

\begin{figure}
\begin{tabular}{c | c | c | c }
Conditions &  LHS of \eqref{rise-intro} & RHS of \eqref{rise-intro} & RHS of \eqref{valley-intro}  \\ \hline
$\langle \cdot, p_{1^n} \rangle$ at $q=0$		& \cite{tesler-wilson}		&\cite{rw}	&\cite{rw}	  \\ 
$\langle \cdot, p_{1^n} \rangle$ at $t=0$		& \cite{tesler-wilson}		&\cite{rw}	&\cite{rhoades-minimaj}	 	\\ 
$\langle \cdot, p_{1^n} \rangle$ at $q=1$		& \cite{tesler-wilson}		& \cite{tesler-wilson} & ? \\
$\langle \cdot, e_{n-d}h_{d}\rangle$, $t=1/q$	& \cite{wilson-thesis}	&\cite{wilson-thesis}	&\cite{wilson-thesis}	 \\ 
$\langle \cdot, h_{n-d}h_{d}\rangle$, $t=1/q$	& \cite{wilson-thesis}	&\cite{wilson-thesis}	&\cite{wilson-thesis} \\
$k=1$		& Section \ref{sec:k=1}		& Section \ref{sec:k=1}	& ?		\\ 
\end{tabular}
\caption{This table summarizes the progress of work on Conjecture \ref{conj:delta}, also known as the Delta Conjecture. The connections between the citations and the Delta Conjecture are explained in Section \ref{sec:osp}. Question marks indicate cases which have not been proved. }
\label{fig:status}
\end{figure}

\section{Background}
\label{sec:background}
In this section, we fix the notation that we did not establish in the introduction. A partition $\lambda \vdash n$ is a weakly decreasing sequence of positive integers of length $\ell(\lambda)$ whose sum is $n$. The sets $\{e_{\lambda} : \lambda \vdash n\}$, $\{m_{\lambda} : \lambda \vdash n\}$, $\{h_{\lambda} : \lambda \vdash n\}$, $\{p_{\lambda} : \lambda \vdash n\}$, $\{s_{\lambda} : \lambda \vdash n\}$, and $\{\tilde{H}_{\lambda} : \lambda \vdash n\}$ are the elementary, monomial, homogeneous, power sum, Schur, and (modified) Macdonald bases for $\Lambda^{(n)}$, the symmetric functions that are homogeneous of degree $n$. A symmetric function is a formal power series in variables $x_1, x_2, x_3, \ldots$ that are invariant under permuting the indices of the variables. We will also use the classical Hall inner product and the involution $\omega$ on $\Lambda$. More information about these topics can be found in \cite{ec2, macdonald, hhl}. 

We will find that the concept of plethysm is quite valuable, especially in Section \ref{sec:t=1/q}. Given a power series $E$ in the variables $q$, $t$ and $x_1, x_2, x_3, \ldots$, we consider $E$ as a sum of monomials. Then, for any symmetric function $f \in \Lambda$, we define the plethysm $p_k[E]$ to be the sum of all the monomials in $E$ raised to the $k$th power. Extending by multiplication, this defines $p_{\lambda}[E]$ for any partition $\lambda$. Finally, for any symmetric function $f$ we compute $f[E]$ by expanding $f$ into the power sum basis and then replacing each $p_{\lambda}$ with $p_{\lambda}[E]$. Sometimes we will use $X$ to denote the sum $x_1 + x_2 + x_3 + \ldots .$ With this notation, we can state a useful identity that is sometimes called Cauchy's Formula: for any bases $\{a_{\lambda} : \lambda \vdash n\}$ and $\{b_{\lambda} : \lambda \vdash n\}$ that are dual with respect to the Hall inner product and two sums $X$ and $Y$,
\begin{align}
e_{n}[XY] &= \sum_{\lambda \vdash n} \omega\left( a_{\lambda}[X]\right) b_{\lambda}[Y] .
\end{align}

The ring of quasisymmetric functions consists of the formal power series in variables $x_1, x_2, x_3, \ldots$ that are invariant under any permutation of the indices that preserves the order of the indices. We will only use the monomial basis $\{M_{\alpha} : \alpha \vDash n\}$ for the quasisymmetric functions that are homogeneous of degree $n$, where $\alpha$ is a composition (i.e.\ vector of positive integers) whose sum is $n$. The book \cite{ec2} contains more information on quasisymmetric functions for the curious reader.

Finally, we use the standard notation for $q$- and $q,t$- integers and binomial coefficients. For integers $n \geq k \geq 0$, 
\begin{align}
\qint{n} &= \sum_{i=0}^{n-1} q^i  &\qtint{n} = \sum_{i=0}^{n-1} q^i t^{n-i-1} \\
\qint{n}! &= \prod_{i=1}^{n} \qint{i} &\qbinom{n}{k} = \frac{\qint{n}!}{\qint{k}! \qint{n-k}!}
\end{align}
with the convention that $\qint{0} = \qtint{0} = 1$.

\section{Alternate combinatorial formulations}
\label{sec:comb}
Set $\risepoly_{n,k}(x;q,t)$ and $\valpoly_{n,k}(x;q,t)$ to be the right-hand sides of \eqref{rise-intro} and \eqref{valley-intro}, respectively. In this section, we define several classes of combinatorial objects and give statistics on these objects which lead to alternate formulas for $\risepoly_{n,k}(x;q,t)$ and $\valpoly_{n,k}(x;q,t)$ .  These formulations make it easier to approach special cases of the Delta Conjecture in Sections \ref{sec:osp} and \ref{sec:k=1}. We also hope that they may be useful in future work on the Delta Conjecture.

\subsection{Decorated labeled Dyck paths}
\label{ssec:decorating}
We begin by decorating labeled Dyck paths. Specifically, given $P \in \ldyck_n$, let the \emph{double rises} of $P$ be the set 
\begin{align}
\rise(P) &= \{2 \leq i \leq n : \areai_i(P) > \areai_{i-1}(P) \} .
\end{align}
These are the rows whose north step is immediately preceded by another north step. Similarly, we define the \emph{double falls} of $P$, written $\fall(P)$, to be the columns of $P$ whose east step is immediately followed by another east step. Then we can define the double rise-decorated, double fall-decorated, and contractible valley-decorated labeled Dyck paths, respectively, as follows:
\begin{align}
\ldyck^{\rise}_{n,k} &= \{ (P, R) : P \in \ldyck_n, R \subseteq \rise(P), |R| = k \} \\
\ldyck^{\fall}_{n,k} &= \{ (P, F) : P \in \ldyck_n, F \subseteq \fall(P), |F| = k \} \\
\ldyck^{\val}_{n,k} &= \{ (P, V) : P \in \ldyck_n, V \subseteq \val(P), |V| = k \} .
\end{align}
There is a trivial bijection between $\ldyck^{\rise}_{n,k}$ and $\ldyck^{\fall}_{n,k}$; namely, given a row $i \in R$ with $\areai_i(P) = a$, send $i$ to the column which contains the first east step north of $i$ that is $a$ lattice steps away from the diagonal. This is equivalent to matching open and closed parentheses in Dyck words. We will give a bijection connecting each of these sets to $\ldyck^{\val}_{n,k}$ later in this section. For now, we define statistics on these objects as follows. For $P \in \ldyck_n$, $R \subseteq \rise(P)$, $F \subseteq \fall(P)$, and $V \subseteq \val(P)$, we set
\begin{align}
\area^{-}((P, R)) &= \sum_{i \in \{1,2,\ldots,n\} \setminus R} \areai_i(P), \\
\area^{-}((P, F)) &= \sum_{i \in \{1,2,\ldots,n\} \setminus F} \col_i(P), \text { and} \\
\dinv^{-}((P, V)) &= \sum_{i \in \{1,2,\ldots,n\} \setminus V} \dinvi_i(P) - |V|,
\end{align}
where $\col_i(P)$ is the number of full squares between $P$ and the diagonal in the $i$th column.

It is not immediately clear from its definition that $\dinv^{-}((P,V))$ is always nonnegative. To see this, consider a (contractible) valley $v$ of a labeled Dyck path $P \in \ldyck_n$. We will show that there is always at least one diagonal inversion of the form $(i,v)$ for $i < v$ with $i \notin \val(P)$. By definition, we must have $v > 1$. If $\areai_{v-1} = \areai_{v}$, then by the definition of contractible valleys $(v-1,v)$ is a diagonal inversion. Now assume that $\areai_{v-1} > \areai_v$. Then there must be a row $j < v$ with $\areai_j = \areai_v$ such that $j+1 \in \rise(P)$. Choose the smallest such $j$. If $j \in \val(P)$, choose $i$ to be as large as possible so that each of $i+1, i+2, \ldots, j \in \val{P}$. By the definition of $i$ and by the choice of $j$, $i$ cannot be a valley. Since $j+1 \in \rise(P)$, $j+1 \notin \val(P)$. We claim that at least one of $(i,v)$ and $(j+1,v)$ is a diagonal inversion. $(i,v)$ is a primary diagonal inversion unless $\lab_i(P) \geq \lab_v(P)$; in that case, $\lab_{j+1}(P) > \lab_{j}(P) > \lab_{i}(P)$, so $(j+1,v)$ is a secondary diagonal inversion.

The following identities follow directly from the definitions given above. They give alternate expressions for the right-hand sides of Conjecture \ref{conj:delta} and, thanks to the argument above, show that the powers of $q$ and $t$ in $\eqref{valley-intro}$ are always nonnegative.

\begin{prop}
\label{prop:decorating}
For integers $n > k \geq 0$,
\begin{align}
\risepoly_{n,k}(x;q,t)  &= \sum_{(P, R) \in \ldyck^{\rise}_{n,n-k-1}} q^{\dinv(P)} t^{\area^{-}((P,R))} x^P \\
&= \sum_{(P, F) \in \ldyck^{\fall}_{n,n-k-1}} q^{\dinv(P)} t^{\area^{-}((P,F))} x^P . \\
\valpoly_{n,k}(x;q,t)&= \sum_{(P, V) \in \ldyck^{\rise}_{n,n-k-1}} q^{\dinv^{-}((P,V))} t^{\area(P)} x^P. 
\end{align}
\end{prop}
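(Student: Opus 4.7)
The plan is to prove all three identities by expanding the products $\prod(1 + z/\cdot)$ in the definitions of $\risepoly_{n,k}$ and $\valpoly_{n,k}$ and extracting the coefficient of $z^{n-k-1}$ term by term. Choosing the $z$-summand from one factor of such a product is combinatorially the same as marking the corresponding row, so the coefficient of $z^{n-k-1}$ is naturally indexed by a labeled Dyck path paired with a decoration set of size $n-k-1$. The bulk of the work is then a routine rewriting of the resulting $q$- and $t$-exponents, and no auxiliary machinery is needed.

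First I would handle the Rise Version. Expanding
$$\prod_{i \in \rise(P)} \bigl(1 + z/t^{\areai_i(P)}\bigr)$$
shows that the coefficient of $z^{n-k-1}$ equals $\sum_{R \subseteq \rise(P), \, |R| = n-k-1} \prod_{i \in R} t^{-\areai_i(P)}$. Multiplying by the prefactor $t^{\area(P)} = t^{\sum_{i} \areai_i(P)}$ leaves $t^{\sum_{i \notin R}\areai_i(P)} = t^{\area^-((P,R))}$, which is exactly the first formula. For the Fall Version I would invoke the trivial bijection between $\ldyck^{\rise}_{n,n-k-1}$ and $\ldyck^{\fall}_{n,n-k-1}$ described immediately before the proposition: if row $i$ is matched to column $j$, a short geometric verification shows that the chosen east step lies $\areai_i(P)$ steps above the diagonal and hence $\col_j(P) = \areai_i(P)$, so $\sum_{i \notin R}\areai_i(P) = \sum_{j \notin F}\col_j(P)$ and the two sums agree term by term with the $q^{\dinv(P)} x^P$ factors matching trivially.

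The Valley Version is proved in the same way. Expanding
$$\prod_{i \in \val(P)}\bigl(1 + z/q^{\dinvi_i(P)+1}\bigr)$$
and extracting the coefficient of $z^{n-k-1}$ yields a sum over $V \subseteq \val(P)$ of size $n-k-1$ with extra weight $\prod_{i \in V} q^{-\dinvi_i(P) - 1}$, and combining with $q^{\dinv(P)} = q^{\sum_{i}\dinvi_i(P)}$ gives $q^{\sum_{i \notin V}\dinvi_i(P) - |V|} = q^{\dinv^-((P,V))}$. (The index set $\ldyck^{\rise}_{n,n-k-1}$ appearing in the statement is plainly a typo and should read $\ldyck^{\val}_{n,n-k-1}$.) The only nontrivial point, and the one place I would expect any work, is to confirm that $\dinv^-((P,V)) \geq 0$, so that the valley formula really is a polynomial in $q$ rather than a Laurent polynomial; this is exactly the pigeonhole-style argument given in the paragraph preceding the proposition, which for each contractible valley $v \in V$ produces a witnessing diagonal inversion $(i,v)$ with $i \notin V$, absorbing the $-|V|$. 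Since each $\areai_i(P) \geq 0$, no analogous argument is needed for the two area-based identities.
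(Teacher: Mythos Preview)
Your proposal is correct and matches the paper's approach: the paper simply asserts that these identities ``follow directly from the definitions given above,'' and your argument spells out precisely that direct verification by expanding the products and extracting the $z^{n-k-1}$ coefficient. Your identification of the typo ($\ldyck^{\rise}$ should be $\ldyck^{\val}$ in the valley formula) and your observation that the nonnegativity of $\dinv^{-}$ is handled by the preceding paragraph are both exactly what the paper intends.
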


\subsection{Leaning stacks}
\label{ssec:stacks}
In this section, we define a class of objects which will allow us to state the two forms of the Delta Conjecture on a single set of objects. We consider what we call \emph{leaning stacks}. A leaning stack is a sequence of $n$ unit lattice square boxes, each of which is either just northeast of the box below it or directly north of the box below it. We denote the set of leaning stacks with $n$ boxes, $k$ of which are diagonally above the square blow them, by $\stack_{n,k}$. 

For a fixed leaning stack $S \in \stack_{n,k}$, the \emph{labeled Dyck paths} with respect to $S$, denoted $\ldyck(S)$, are the lattice paths consisting of north and east steps from $(0,0)$ to $(k+1,n)$ that remain weakly to the left of the left border of $S$ and which are labeled according to the same rules as stated in Section \ref{sec:intro}. We denote the unlabeled versions of these objects by $\dyck(S)$. We set $\ldyck^{\stack}_{n,k} = \cup_{S \in \stack_{n,k}} \ldyck(S)$.

We claim that $\ldyck^{\stack}_{n,k}$ is in bijection with each of $\ldyck^{\rise}_{n,n-k-1}$, $\ldyck^{\fall}_{n,n-k-1}$, and $\ldyck^{\val}_{n,n-k-1}$. Furthermore, we can translate the statistics from these sets of objects to $\ldyck^{\stack}_{n,k}$. Given $P \in \ldyck(S)$ with leaning stack $S \in \stack_{n,k}$, for each row of $P$ set $\areai_i(P)$ to be the number of squares between $P$ and $S$ and $\height_i(P)$ to be the number of squares strictly below the square just to the right of the north step in row $i$ and weakly above the bottom square of $S$ in the same column. Then $\area(P) = \sum_{i=1}^{n} \areai_i(P)$ is simply the number of squares between $P$ and $S$. (Note that this is not equal to $\sum_{i=1}^{n} \height_i(P)$.) Set $\diag(S)$ to be the rows of $S$ which are diagonally above the square below them along with row 1. Then we can define
\begin{align}
\winv(P) &= | \{1 \leq i < j \leq n : i \in \diag(S), \areai_i(P) = \areai_j(P), \lab_i(P) < \lab_j(P) \} | \\
\nonumber
&+ |\{1 \leq i < j \leq n : i \in \diag(S), \areai_i(P) = \areai_j(P)+1, \lab_i(P) > \lab_j(P) \} | \\
\nonumber
&- (n-k-1) \\
\hinv(P) &= | \{1 \leq i < j \leq n : \height_i(P) = \height_j(P), \lab_i(P) < \lab_j(P) \} | \\
\nonumber
&+ |\{1 \leq i < j \leq n : \height_i(P) = \height_j(P) + 1, \lab_i(P) > \lab_j(P) \} | .
\end{align}

\begin{figure}
\begin{tikzpicture}
\filldraw[yellow!40!white] (0,0) rectangle (0.5,0.5);
\filldraw[yellow!40!white] (0.5,0.5) rectangle (1,2.5);
\filldraw[yellow!40!white] (1,2.5) rectangle (1.5,3);
\draw[step=0.5cm, gray, very thin] (-0.001,0) grid (1.5,3);

\draw[blue, very thick] (0,0) -- (0,1.5) -- (0.5,1.5) -- (0.5,3) -- (1.5,3);

\node at (0.25, 0.25) {3};
\node at (0.25, 0.75) {4};
\node at (0.25, 1.25) {6};
\node at (0.75, 1.75) {1};
\node at (0.75, 2.25) {4};
\node at (0.75, 2.75) {5};

\node at (2.5,3.25) {$\areai_i$};
\node at (2.5,0.25) {0};
\node at (2.5,0.75) {1}; 
\node at (2.5,1.25) {1};
\node at (2.5,1.75) {0};
\node at (2.5,2.25) {0};
\node at (2.5, 2.75) {1};

\node at (3.5,3.275) {$\height_i$};
\node at (3.5,0.25) {0};
\node at (3.5,0.75) {1}; 
\node at (3.5,1.25) {2};
\node at (3.5,1.75) {2};
\node at (3.5,2.25) {3};
\node at (3.5, 2.75) {4};

\end{tikzpicture}
\caption{An example $P \in \ldyck^{\stack}_{6,2}$ with stack $S$ given by $\diag(S) = \{1, 2, 6\}$.  The boxes in the stack are shaded yellow. We have $\area(P) = 3$, $\winv(P) =1$, and $\hinv(P) = 0$. } 
\label{fig:stack}
\end{figure}

\begin{prop}
\label{prop:stacks}
We can construct bijections
\begin{align}
\phi_{n,k}&: \ldyck^{\fall}_{n,n-k-1} \to \ldyck^{\stack}_{n,k} \\
\psi_{n,k}&: \ldyck^{\val}_{n,n-k-1} \to \ldyck^{\stack}_{n,k}
\end{align}
such that
\begin{align}
\label{area-}
\area(\phi_{n,k}((P,F))) &= \area^{-}((P,F)) \\
\label{hinv}
\hinv(\phi_{n,k}((P,F))) &= \dinv(P) \\
\area(\psi_{n,k}((P,F))) &= \area(P) \\
\label{winv}
\winv(\psi_{n,k}((P,V))) &= \dinv^{-}((P,V)) .
\end{align}
and $x^P$ is preserved. As a result,
\begin{align}
\risepoly_{n,k}(x;q,t) &= \sum_{P \in \ldyck^{\stack}_{n,k}} q^{\hinv(P)} t^{\area(P)} x^P \\
\valpoly_{n,k}(x;q,t) &= \sum_{P \in \ldyck^{\stack}_{n,k}} q^{\winv(P)} t^{\area(P)} x^P .
\end{align}
\end{prop}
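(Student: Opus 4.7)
The plan is to construct $\psi_{n,k}$ and $\phi_{n,k}$ as explicit contraction bijections and then verify the statistic identities by row-by-row or pair-by-pair arguments.

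For $\psi_{n,k}$, given $(P, V) \in \ldyck^{\val}_{n, n-k-1}$, I would delete the east step immediately preceding the $i$th north step of $P$ for each $i \in V$. The contractibility condition built into $\val(P)$ (as described visually after its definition) is precisely what ensures that the resulting labeled lattice path from $(0,0)$ to $(k+1,n)$ still has strictly increasing labels in each column. Define the stack $S$ by setting $\diag(S) = \{1,\ldots,n\} \setminus V$, so that $|\diag(S)| = k+1$ and $S \in \stack_{n,k}$. The inverse simply reinserts an east step immediately before the $i$th north step for each $i \notin \diag(S)$. For $\phi_{n,k}$ I would use an analogous contraction: delete each marked east step in $F$ directly. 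Because each marked step is a double fall, the trailing east step in its run keeps the contracted path well-formed, and the stack is built by recording the positions of the deletions as non-diagonal rows.

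Verification for $\psi_{n,k}$ is essentially immediate. Deleting an east step just before the $i$th north step decreases the $x$-coordinate of that north step by one, while the stack column at row $i$ simultaneously decreases by one (since $i$ becomes non-diagonal), so $\areai_i(P) = \areai_i(\psi(P,V))$ for every $i$. Summing over $i$ gives $\area$-preservation. Since labels stay in their rows, the monomial $x^P$ is preserved and each $\dinvi_j$ matches in $\psi(P,V)$ as well. Summing these over $j \in \diag(S) = \{1,\ldots,n\} \setminus V$ and subtracting $|V| = n - k - 1$ yields $\winv(\psi(P,V)) = \dinv^-((P,V))$.

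For $\phi_{n,k}$, the identity $\area^-((P,F)) = \area(\phi(P,F))$ follows from the column-based form of $\area^-$ in Proposition \ref{prop:decorating}: each deleted east step removes exactly its column's contribution to the area. The identity $\hinv(\phi(P,F)) = \dinv(P)$ is the main technical obstacle, because $\hinv$ is measured against $\height$ rather than $\areai$, and heights can shift non-trivially when east steps are removed. I would establish it by classifying pairs $(i,j)$ of rows according to whether they straddle removed east steps and matching each $\hinv$-contribution in $\phi(P,F)$ with a $\dinv$-contribution in $P$; verifying that this matching is a bijection on pairs is the heart of the argument. Once the bijections and the identities \eqref{area-} through \eqref{winv} are in hand, the polynomial formulas for $\risepoly_{n,k}$ and $\valpoly_{n,k}$ follow by rewriting the sums of Proposition \ref{prop:decorating} under the bijections.
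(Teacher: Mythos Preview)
Your construction and verification of $\psi_{n,k}$ are correct and match the paper's argument.

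For $\phi_{n,k}$, the contraction itself is right, but you are substantially overestimating the difficulty of \eqref{hinv}. The key observation you miss is that, once the stack is specified correctly (the paper makes row $j+1$ non-diagonal for each double-fall column $j \in F$; your phrase ``recording the positions of the deletions as non-diagonal rows'' leaves this ambiguous, since elements of $F$ are column indices), one has
\[
\height_i\bigl(\phi_{n,k}((P,F))\bigr) = \areai_i(P) \qquad \text{for every row } i.
\]
Since $\hinv$ is defined by exactly the same pair conditions as $\dinv$ with $\height_i$ in place of $\areai_i$, and the labels are preserved row by row, the identity $\hinv(\phi_{n,k}((P,F))) = \dinv(P)$ is then immediate. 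No case analysis on pairs $(i,j)$ straddling removed east steps, and no bijection on inversion pairs, is required: your ``main technical obstacle'' dissolves into a one-line verification of this row-wise equality, which is exactly how the paper handles it.
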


\begin{proof}
To define $\phi_{n,k}$, we take some $P \in \ldyck_{n}$, $F \subseteq \fall(P)$ with $|F| = n-k-1$. We begin with the leaning stack that consists entirely of diagonal steps between squares. Then, for each column $j \in F$, we remove the east step in column $j+1$ and move the square of $S$ in column $j+1$ one space to the left. The result is $\phi_{n,k}((P,F))$. To invert $\phi_{n,k}$, we simply ``push'' over all squares of the stack that appear directly above the square below them and insert east steps in the columns that were occupied by these squares. To see that $\phi_{n,k}$ cooperates with the statistics as proposed, we note that, for each $j \in F$, the process above removes $j$ squares from between $P$ and the diagonal. This proves \eqref{area-}. Equation \eqref{hinv} follows from the fact that $\height_i(\phi_{n,k}((P,F))) =  \areai_{i}(P)$ and the definitions given above.

Now we define $\psi_{n,k}$ for $P \in \ldyck_n$, $V \subseteq \val(P)$. We begin with the completely diagonal leaning stack again. For each $i \in V$, we remove the east step preceding the north step in row $i$ and move the square of $S$ in row $i$ one space to the left.  To invert $\psi_{n,k}$, we push over all vertical squares in the stack and insert east steps preceding the rows that were occupied by these squares. We notice that, for each row $i$, $\areai_i(P) = \areai_i(\psi_{n,k}((P,V)))$, so $\psi_{n,k}$ preserves area. Equation \eqref{winv} follows from the definitions of $\winv$ and $\dinv^{-}$. 

\end{proof}

Figure \ref{fig:bijections} contains examples of the maps $\phi_{n,k}$ and $\psi_{n,k}$. We note that the composition $\psi_{n,k} \, \circ \,  \phi_{n,k}$ is a bijection $\ldyck^{\fall}_{n,n-k-1} \to \ldyck^{\val}_{n,n-k-1}$ that preserves the monomial $x^P$. Furthermore, Proposition \ref{prop:stacks} implies $\risepoly_{n,k}(x;1,t) = \valpoly_{n,k}(x;1,t)$.

\subsection{Densely labeled Dyck paths}
\label{ssec:dense}

For our final combinatorial formulation, we again begin with integers $n > k \geq 0$. We use a shorter Dyck path $D \in \dyck_{k+1}$. Now we label each lattice square that occurs weakly above the line $y=x$ whose northwest corner intersects $D$. A square whose west edge is a north step of $D$ is called a north square; the other labeled squares are called east squares. Furthermore, we label these squares with sets of positive integers such that
\begin{enumerate}
\item no north square receives the label $\emptyset$,
\item for two north squares in the same column, every entry in the label of the lower square is less than every entry in the label of the upper square, and
\item there are $n$ total elements used in the labels.
\end{enumerate}
We call the resulting collection of objects \emph{densely labeled Dyck paths}, written \\ $\ldyck^{\dense}_{n,k}$. Figure \ref{fig:bijections} contains an example of a densely labeled Dyck path.

In order to move the statistics from our previous objects, for each element $r$ of any label in some $P \in \ldyck^{\dense}_{n,k}$ we set $\area(r, P)$ to be the number of full squares between $r$'s square and the diagonal. It is quite difficult to define the height of an entry in this setting, so we focus only on the $\area$ and $\winv$ statistics. We say
\begin{itemize}
\item $\area(P) = \sum_{\text{label entries } r} \area(r, P)$,
\item $\winv(P)$ is equal to the number of pairs of label entries $(r,s)$ with $r$ minimal in its square, $r$'s square appearing strictly west of $s$'s square, and either
\begin{itemize}
\item $r < s$ and $\area(r, P) = \area(s, P)$, or
\item $r > s$ and $\area(r, P) = \area(s, P) + 1$
\end{itemize}
minus the number of entries in labels in east squares in $P$.
\end{itemize}

\begin{prop}
\label{prop:dense}
We can construct a bijection $\theta_{n,k} : \ldyck^{\stack}_{n,k} \to \ldyck^{\dense}_{n,k}$ such that
\begin{align}
\area(\theta_{n,k}(P)) &= \area(P) \\ 
\winv(\theta_{n,k}(P)) &= \winv(P).
\end{align}
As a result, 
\begin{align}
\valpoly_{n,k}(x;q,t) &= \sum_{P \in \ldyck^{\dense}_{n,k}} q^{\winv(P)} t^{\area(P)} x^P .
\end{align}
\end{prop}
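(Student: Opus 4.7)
The plan is to define $\theta_{n,k}$ by compressing the vertical structure of the leaning stack $S$ into unit cells. Given $P \in \ldyck^{\stack}_{n,k}$, the stack $S$ has $|\diag(S)|=k+1$ columns, the $j$-th containing rows $r_j, r_j+1, \ldots, r_{j+1}-1$ of the path, and I would produce $\theta_{n,k}(P) \in \ldyck^{\dense}_{n,k}$ by collapsing each such column to a single cell while shrinking the accompanying path from height $n$ to height $k+1$. The shape $D \in \dyck_{k+1}$ of the dense path records the compressed profile of $S$ together with $P$: a diagonal step of $S$ becomes a north step of $D$, while east steps of $D$ arise from east steps of $P$ occurring within a single stack column. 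The set labels are inherited from $P$: $\lab_{r_j}(P)$ becomes the minimal element of the cell corresponding to the bottom of the $j$-th stack column, and the remaining labels $\lab_{r_j+1}(P), \ldots, \lab_{r_{j+1}-1}(P)$ are placed either as further entries in the same north square of $D$ or as entries of an east square, according to whether $P$ stays in a single dense column or crosses an east step while traversing the $j$-th stack column.

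Verifying that the output lies in $\ldyck^{\dense}_{n,k}$ amounts to checking the three labeling conditions: every north square is nonempty since it contains at least $\lab_{r_j}$, the strict set-wise increase for north squares in the same column of $D$ is inherited from the strict column-wise increase of labels in $P$, and the total number of label entries is $n$ by construction. For the inverse, I would expand each north square vertically by listing its elements in increasing order and reinsert east steps to accommodate east-square entries, uniquely recovering both $S$ and $P$.

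For the statistics, $\area$ is preserved directly since the squares between $P$ and $S$ are in bijection with those contributing to $\area(\theta_{n,k}(P))$ under the compression. The key balancing for $\winv$ is that the minimal entries of the cells of $\theta_{n,k}(P)$ correspond bijectively to the rows $i \in \diag(S)$, and the primary/secondary inversion pairs in $\winv(P)$ map to the pairs counted in the dense $\winv$. The correction term $-(n-k-1)$ on the stack side matches the ``$-$(east-square entries)'' correction on the dense side, because exactly $n-(k+1)=n-k-1$ rows lie outside $\diag(S)$ and become east-square entries under $\theta_{n,k}$.

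The main obstacle will be the bookkeeping for $\winv$: one must verify, pair by pair, that the inversions counted on the stack side correspond precisely to those counted on the dense side, with the minimality condition ``$r$ minimal in its square'' aligning perfectly with ``$i \in \diag(S)$''. The subtlety lies in tracking how the primary/secondary distinction (based on $\areai_i = \areai_j$ versus $\areai_i = \areai_j+1$) transfers through the compression, especially for pairs where one entry is an east-square entry. Once this correspondence is pinned down, the identity $\valpoly_{n,k}(x;q,t) = \sum_{P \in \ldyck^{\dense}_{n,k}} q^{\winv(P)} t^{\area(P)} x^P$ follows immediately from Proposition \ref{prop:stacks}.
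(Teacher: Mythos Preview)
Your construction is essentially the paper's: contract every north step of $P$ lying in a row outside $\diag(S)$ (a ``vertical'' row of the stack) and merge its label into the square just below. The paper states this in one sentence and asserts that the statistics match ``from the definitions,'' so your more detailed outline is in the right spirit. However, your verification of $\winv$ contains a concrete error.

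You claim that the $n-k-1$ rows outside $\diag(S)$ all become east-square entries, so that the correction $-(n-k-1)$ on the stack side equals the correction $-|\text{east-square entries}|$ on the dense side. This is false. When a vertical row sits directly above a row of $\diag(S)$ in the \emph{same column of $P$} (no intervening east step), its label merges into a \emph{north} square, not an east square. In the paper's own example (Figure~\ref{fig:bijections}), row~$3$ is vertical but its label~$6$ joins the north square $\{4,6\}$; only rows~$4$ and~$5$ land in east squares, giving $2$ east-square entries, not $n-k-1=3$. Correspondingly the raw inversion counts differ as well (here $4$ on the stack side versus $3$ on the dense side), so your proposed direct matching of inversions cannot work as stated: the minima of east squares, which are permitted as the left entry $r$ in the dense $\winv$, do \emph{not} correspond to rows of $\diag(S)$. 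The equality $\winv(\theta_{n,k}(P))=\winv(P)$ does hold, but the cancellation is more delicate than a term-by-term matching of raw counts and corrections; you will need to account separately for the inversions whose left entry is an east-square minimum and show that these exactly offset the discrepancy between $n-k-1$ and the number of east-square entries.
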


\begin{proof}
We define $\theta_{n,k}$ by contracting every north step of $P$ that shares a row with a vertical square of the leaning stack. The labels whose north steps are removed are simply combined with the remaining labels to form the set labels. The inverse is direct and the assertions about the statistic follow from the definitions above. 
\end{proof}

We summarize all of our bijections in Figure \ref{fig:bijections}.

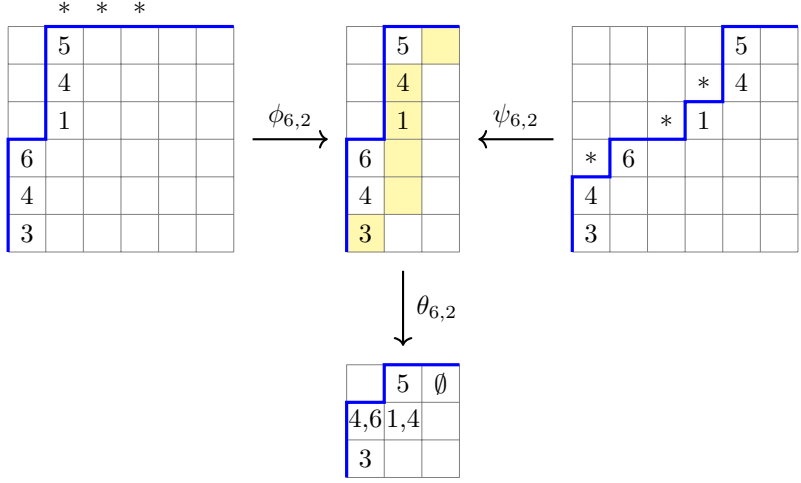
\begin{figure}
\begin{tikzpicture}
\filldraw[yellow!40!white] (0,0) rectangle (0.5,0.5);
\filldraw[yellow!40!white] (0.5,0.5) rectangle (1,2.5);
\filldraw[yellow!40!white] (1,2.5) rectangle (1.5,3);
\draw[step=0.5cm, gray, very thin] (-0.001,0) grid (1.5,3);
\draw[blue, very thick] (0,0) -- (0,1.5) -- (0.5,1.5) -- (0.5,3) -- (1.5,3);
\node at (0.25, 0.25) {3};
\node at (0.25, 0.75) {4};
\node at (0.25, 1.25) {6};
\node at (0.75, 1.75) {1};
\node at (0.75, 2.25) {4};
\node at (0.75, 2.75) {5};

\draw[thick, ->] (-1.25, 1.5) -- (-0.25, 1.5);
\node at (-0.75, 1.8) {$\phi_{6,2}$};

\draw[step=0.5cm, gray, very thin] (-4.501, 0) grid (-1.5, 3);
\draw[blue, very thick] (-4.5, 0) -- (-4.5, 1.5) -- (-4, 1.5) -- (-4, 3) -- (-1.5, 3);
\node at (-4.25, 0.25) {3};
\node at (-4.25, 0.75) {4};
\node at (-4.25, 1.25) {6};
\node at (-3.75, 1.75) {1};
\node at (-3.75, 2.25) {4};
\node at (-3.75, 2.75) {5};
\node at (-3.75, 3.25) {$\ast$};
\node at (-3.25, 3.25) {$\ast$};
\node at (-2.75, 3.25) {$\ast$};

\draw[thick, ->] (2.75, 1.5) -- (1.75, 1.5);
\node at (2.25, 1.8) {$\psi_{6,2}$};

\draw[step=0.5cm, gray, very thin] (2.999, 0) grid (6, 3);
\draw[blue, very thick] (3,0) -- (3,1) -- (3.5, 1) -- (3.5, 1.5) -- (4.5, 1.5) -- (4.5, 2) -- (5, 2) -- (5, 3) -- (6, 3);
\node at (3.25, 0.25) {3};
\node at (3.25, 0.75) {4};
\node at (3.75, 1.25) {6};
\node at (4.75, 1.75) {1};
\node at (5.25, 2.25) {4};
\node at (5.25, 2.75) {5};
\node at (3.25, 1.25) {$\ast$};
\node at (4.25, 1.75) {$\ast$};
\node at (4.75, 2.25) {$\ast$};

\draw[thick, ->] (0.75, -0.25) -- (0.75, -1.25);
\node at (1.2, -0.75) {$\theta_{6,2}$};

\draw[step=0.5cm, gray, very thin] (0,-3) grid (1.5, -1.5);
\draw[blue, very thick] (0, -3) -- (0, -2) -- (0.5, -2) -- (0.5, -1.5) -- (1.5, -1.5);
\node at (0.25, -2.75) {3};
\node at (0.25, -2.25) {4,6};
\node at (0.75, -2.25) {1,4};
\node at (0.75, -1.75) {5};
\node at (1.25, -1.75) {$\emptyset$};

\end{tikzpicture}
\caption{Examples of the maps $\phi_{6,2}$, $\psi_{6,2}$, and $\theta_{6,2}$. We have marked the selected double falls and contractible valleys with stars.}
\label{fig:bijections}
\end{figure}

\subsection{The $q=t=1$ case}
\label{ssec:q=t=1}
As an immediate application of these interpretations, we obtain a formula for $\risepoly_{n,k}(x;1,1) + \risepoly_{n,k-1}(x;1,1)$. Thanks to the leaning stacks interpretation, we already know that $\risepoly_{n,k}(x;1,t) = \valpoly_{n,k}(x;1,t)$. In Section \ref{sec:t=1/q}, we will see that $\Delta_{e_{k}} e_n$ also obeys the formula we prove here. This proves the $q=t=1$ case of the Delta Conjecture.

\begin{prop}
\label{prop:q=t=1}
For any integers $1 \leq k \leq n$, 
\begin{align}
\risepoly_{n,k}(x;1,1) + \risepoly_{n,k}(x;1,1) &= \left.  \frac{1}{k+1} \binom{n}{k} \left(\sum_{i \geq 0} e_i u^i \right)^{k+1} \right|_{u^n} \\
&= \frac{1}{k+1} \binom{n}{k} e_{n}[(k+1)X] . 
\end{align}

\end{prop}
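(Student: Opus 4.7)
The plan is to apply the leaning stack interpretation (Proposition \ref{prop:stacks}) at $q = t = 1$ and then prove the resulting combinatorial identity by a cycle-lemma argument. The left side is presumably intended to be $\risepoly_{n,k}(x;1,1) + \risepoly_{n,k-1}(x;1,1)$, consistent with $\Delta_{e_k} e_n = \Delta^{\prime}_{e_k} e_n + \Delta^{\prime}_{e_{k-1}} e_n$ from the introduction. At $q = t = 1$ the statistics $\hinv$ and $\area$ collapse, so Proposition \ref{prop:stacks} gives $\risepoly_{n,k}(x;1,1) = \sum_{P \in \ldyck^{\stack}_{n,k}} x^P$. First I would unify the $k$ and $k-1$ terms by allowing the rightmost column of the stack to have height zero: the union $\ldyck^{\stack}_{n,k} \cup \ldyck^{\stack}_{n,k-1}$ is in bijection with pairs $(q, c)$, where $q = (q_0,\ldots,q_k)$ is a composition of $n$ with $q_0,\ldots,q_{k-1} \geq 1$, $q_k \geq 0$, and $c = (c_0,\ldots,c_k)$ is a composition of $n$ into $k+1$ nonnegative parts satisfying the dominance $c_0 + \cdots + c_{j-1} \geq q_0 + \cdots + q_{j-1}$ for $j = 1, \ldots, k$. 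There are exactly $\binom{n}{k}$ such ``combined stacks'' $q$, and summing strictly increasing labels in each column of $c$ gives the weight $\prod_{j=0}^{k} e_{c_j}[X]$, producing
\begin{align*}
\risepoly_{n,k}(x;1,1) + \risepoly_{n,k-1}(x;1,1) = \sum_{(c,q) \, : \, c \text{ dominates } q} \, \prod_{j=0}^{k} e_{c_j}[X].
\end{align*}

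The key identity I would then prove is $M(c) := \sum_{i=0}^{k} N(c^{(i)}) = \binom{n}{k}$ for every composition $c$ of $n$ into $k+1$ nonnegative parts, where $N(c) = \#\{q \text{ combined stack} : c \text{ dominates } q\}$ and $c^{(i)}$ is the cyclic rotation of $c$ by $i$ positions. Because $\prod_j e_{c_j}[X]$ is invariant under cyclic rotations of $c$, averaging over the orbit gives
\begin{align*}
(k+1) \sum_{(c,q)} \prod_{j} e_{c_j}[X] \;=\; \sum_{c} M(c) \prod_{j} e_{c_j}[X] \;=\; \binom{n}{k} \sum_{c} \prod_{j} e_{c_j}[X] \;=\; \binom{n}{k} \, e_{n}[(k+1)X],
\end{align*}
and dividing by $k+1$ produces the claimed formula. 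The two displayed forms of the right-hand side are related by the standard identity $\bigl(\sum_{i \geq 0} e_i u^i\bigr)^{k+1} \big|_{u^n} = e_n[(k+1)X]$.

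For $M(c) = \binom{n}{k}$, I would apply Raney's cycle lemma to the cyclic word $w = N^{c_0} E N^{c_1} E \cdots N^{c_k} E$ of length $n + k + 1$. Combined stacks $q$ correspond bijectively, via $Q_j = q_0 + \cdots + q_{j-1}$, to $k$-element subsets of the $n$ $N$-cells of $w$ (thought of as ``absolute mark sets''). For a fixed mark set, define a walk that adds $+1$ at a marked $N$-cell, $0$ at an unmarked $N$-cell, and $-1$ at each $E$-cell; its total step-sum is $k - (k+1) = -1$. By Raney's lemma there is a unique cyclic starting position for which every intermediate partial sum is nonnegative (with final value $-1$). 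Since the walk can only attain a new minimum at an $E$-cell, this distinguished starting position must sit immediately after an $E$-cell, hence agrees with one of the $k+1$ rotations $c^{(i)}$. A direct check shows that requiring the partial sums to be nonnegative at the first $k$ $E$-cells is exactly the dominance $Q_j \leq C^{(i)}_j$ for $j = 1,\ldots,k$; the intermediate positions are automatically nonnegative because the walk is nondecreasing on stretches between $E$-cells. Each of the $\binom{n}{k}$ mark sets therefore contributes exactly one valid pair $(i, Q)$ to $M(c)$, giving $M(c) = \binom{n}{k}$.

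The main conceptual point to justify carefully is that the unique Raney-good starting position lands just after an $E$-cell: this uses the fact that the walk can only hit new running minima at $E$-steps, and deserves a short argument when several positions achieve the global minimum. The rest of the argument is bookkeeping, in particular confirming that the $\risepoly_{n,k-1}$ term embeds into the $(k+1)$-column framework via a phantom last column with $c_k = q_k = 0$ (contributing the harmless factor $e_0 = 1$).
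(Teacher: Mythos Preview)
Your argument is correct, but it takes a genuinely different route from the paper's. The paper works directly with the original definition of $\risepoly_{n,k}$: at $q=t=1$ one has
\[
\risepoly_{n,k}(x;1,1) + \risepoly_{n,k-1}(x;1,1) \;=\; \left. \sum_{P \in \ldyck_n} (1+z)^{|\rise(P)|+1} x^P \right|_{z^{n-k}},
\]
then groups the $n\times n$ labeled Dyck paths by the partition $\lambda$ of vertical run lengths, invokes the known count $\frac{1}{n+1}\binom{n+1}{c_1,\ldots,c_n,\,n-\ell(\lambda)+1}$ for Dyck paths with prescribed run multiplicities (citing \cite{rational-catalan}), and closes with a short multinomial/generating-function simplification. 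You instead pass to the leaning-stack model of Proposition~\ref{prop:stacks}, recast the sum as pairs $(c,q)$ in a $(k{+}1)\times n$ grid with a dominance condition, and prove the key identity $\sum_i N(c^{(i)}) = \binom{n}{k}$ by a direct Raney/cycle-lemma bijection between mark sets and rotation--stack pairs.

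Both approaches are ultimately cycle-lemma arguments, but at different levels: the paper \emph{cites} a cycle-lemma consequence (the Dyck path count by run profile) and finishes in two lines, while you \emph{reprove} the needed instance from scratch in the leaning-stack framework. Your version is longer but self-contained; it also makes the appearance of the factor $\frac{1}{k+1}\binom{n}{k}$ transparent as an average over cyclic rotations, which is pleasant. The paper's version is quicker and keeps the combinatorics on the $n\times n$ Dyck path side, avoiding the translation through Proposition~\ref{prop:stacks}. Your justification that the unique good start lies just after an $E$-cell is fine: if the preceding cell were an $N$ (marked or not) its nonnegative step would make the previous position a good start as well, contradicting uniqueness.
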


\begin{proof}
By the definition of $\risepoly_{n,k}(x;q,t)$ appearing in the Delta Conjecture,
\begin{align}
\label{q=t=1-start}
\risepoly_{n,k}(x;1,1) + \risepoly_{n,k-1}(x;1,1) &= \left. \sum_{P \in \ldyck_n} (1+z)^{|\rise(P)| + 1} x^P \right|_{z^{n-k}} .
\end{align}
Given a partition $\lambda \vdash n$, set $c_i = c_i(\lambda)$ to be the multiplicity of $i$ in $\lambda$. As mentioned in Equation 4 of \cite{rational-catalan}, the number of Dyck paths with exactly $c_i$ vertical runs of length $i$ for each $i$ is 
\begin{align}
\frac{1}{n+1} \binom{n+1}{c_1, c_2, \ldots, c_n, n-\ell(\lambda)+1} .
\end{align}
Furthermore, such a Dyck path has $n-\ell(\lambda)$ double rises. We label each of the vertical runs of such a Dyck path with increasing sequences of integers, contributing an $e_{\lambda}$ term. Hence \eqref{q=t=1-start}
\begin{align}
&= \left. \sum_{\lambda \vdash n} \frac{1}{n+1} \binom{n+1}{c_1, c_2, \ldots, c_n, n-\ell(\lambda)+1} (1+z)^{n-\ell(\lambda)+1} e_{\lambda} \right|_{z^{n-k}} \\
&= \left. \frac{1}{n+1} \binom{n+1}{n-k} \left(\sum_{i \geq 0} e_i u^i \right)^{k+1} \right|_{u^n} 
\end{align}
which proves the first identity in the proposition. The second identity is a consequence of Cauchy's Formula.
\end{proof}

\section{Ordered set partitions and the $q=0$, $t=0$, and $q=1$ cases}
\label{sec:osp}
In this section, we show how previous work of the second and third authors in \cite{rw, tesler-wilson} can be combined to prove the following result.

\begin{thm}
\label{thm:zero}
 The coefficients of the monomial quasisymmetric function $M_{1^n}$ are equal in each of the following:
\begin{align}
\risepoly_{n,k}(x;q,0), \ \ 
\risepoly_{n,k}(x;0,q), \ \ 
\valpoly_{n,k}(x;q,0), \ \
\left. \Delta^{\prime}_{e_k} e_n \right|_{t=0}, \ \ 
\left. \Delta^{\prime}_{e_k} e_{n} \right|_{q=0, \, t=q}.
\end{align}
\end{thm}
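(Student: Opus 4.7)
The plan is to identify the $M_{1^n}$-coefficient of each of the five expressions with a single polynomial $F_{n,k}(q) \in \mathbb{N}[q]$ enumerating ordered set partitions of $\{1,2,\ldots,n\}$ into $k+1$ blocks by a common $q$-statistic. For a degree-$n$ symmetric function $f$, the coefficient of $M_{1^n}$ equals $[x_1 x_2 \cdots x_n]\, f = \langle f, p_{1^n}\rangle$, so on the three combinatorial sides it restricts the sum to labeled Dyck paths whose labels form a permutation of $\{1,\ldots,n\}$, i.e.\ classical parking functions.

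First I would treat the two rise polynomial specializations. By Proposition \ref{prop:decorating}, $\risepoly_{n,k}(x;q,0)$ is a sum over rise-decorated parking functions with $\area^{-}=0$, weighted by $q^{\dinv}$, while $\risepoly_{n,k}(x;0,q)$ is a sum over rise-decorated parking functions with $\dinv=0$, weighted by $q^{\area^{-}}$. The bijections of \cite{tesler-wilson} and \cite{rw} send each of these restricted families to ordered set partitions of $\{1,\ldots,n\}$ into $k+1$ blocks and carry the surviving statistic ($\dinv$ or $\area^{-}$) to the canonical $q$-statistic on ordered set partitions, so both specializations equal $F_{n,k}(q)$.

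Next I would handle $\valpoly_{n,k}(x;q,0)$, which by Proposition \ref{prop:decorating} is a sum over valley-decorated parking functions weighted by $q^{\dinv^{-}} t^{\area}$. Setting $t=0$ leaves precisely the zero-area paths weighted by $q^{\dinv^{-}}$; Rhoades's minimaj bijection \cite{rhoades-minimaj} sends these to ordered set partitions into $k+1$ blocks, transporting $\dinv^{-}$ to minimaj, which by the equidistribution result of \cite{rhoades-minimaj} coincides with the statistic underlying $F_{n,k}(q)$.

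Finally, for the algebraic side, the Tesler-matrix evaluation of \cite{tesler-wilson} computes $[M_{1^n}]\, \Delta^{\prime}_{e_k} e_n|_{t=0}$ directly as $F_{n,k}(q)$. The specialization at $q=0$, $t=q$ follows from the classical symmetries $\tilde H_\mu(x;q,t) = \tilde H_{\mu^{\prime}}(x;t,q)$ and $B_\mu(q,t) = B_{\mu^{\prime}}(t,q)$: writing $e_n = \sum_\mu c_\mu(q,t)\, \tilde H_\mu$, the expansion coefficients satisfy $c_\mu(q,t) = c_{\mu^{\prime}}(t,q)$, and reindexing $\mu \leftrightarrow \mu^{\prime}$ in the eigenvalue formula yields $\Delta^{\prime}_{e_k} e_n|_{q=0,t=q} = \Delta^{\prime}_{e_k} e_n|_{t=0}$ as symmetric functions in $x$, so the $M_{1^n}$-coefficients are automatically equal. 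The main obstacle is the first half of the argument: the transports of $\dinv$, $\area^{-}$, and $\dinv^{-}$ to the ordered set partition side go through different bijections in different sources, and certifying that all three land on the same polynomial $F_{n,k}(q)$ amounts to assembling a commutative diagram of equidistribution theorems drawn from \cite{rw}, \cite{tesler-wilson}, and \cite{rhoades-minimaj}.
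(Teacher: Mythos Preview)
Your overall architecture matches the paper's: interpret each of the five $M_{1^n}$-coefficients as a generating polynomial over $\osp{n}{k+1}$ with respect to some statistic, then invoke the equidistribution results of \cite{rw} together with the Tesler-matrix computation of \cite{tesler-wilson} for the algebraic side. Your $q\leftrightarrow t$ symmetry argument for $\Delta^{\prime}_{e_k} e_n$ is correct and is a clean way to get the fifth expression from the fourth.

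There is, however, a concrete mix-up in your treatment of $\valpoly_{n,k}(x;q,0)$. Setting $t=0$ does indeed isolate the area-zero valley-decorated parking functions weighted by $q^{\dinv^{-}}$, but the bijection to $\osp{n}{k+1}$ carries $\dinv^{-}$ to the \emph{inv} statistic, not to $\minimaj$. This is exactly Proposition~\ref{prop:omp}\eqref{omp-inv}: on area-zero objects the underlying Dyck path is forced to be $(NE)^{k+1}$, and reading the label sets right to left as blocks converts $\winv$ (equivalently $\dinv^{-}$) into $\inv$. No appeal to Rhoades is needed here, and in fact Rhoades's result concerns $\minimaj$, which is the statistic arising from the \emph{other} specialization $\valpoly_{n,k}(x;0,q)$ --- precisely the expression that is \emph{omitted} from the list in Theorem~\ref{thm:zero}. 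So your proposed route for $\valpoly_{n,k}(x;q,0)$ invokes the wrong bijection, the wrong statistic, and the wrong reference; once you replace that step by the direct $\dinv^{-}\mapsto\inv$ identification, the remaining equidistribution input from \cite{rw} (that $\inv$, $\dinv$, and $\maj$ agree on $\osp{n}{k+1}$) closes the argument exactly as in the paper.

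A smaller point: the bijections taking $\risepoly_{n,k}(x;q,0)$ and $\risepoly_{n,k}(x;0,q)$ to the $\dinv$ and $\maj$ generating functions on $\osp{n}{k+1}$ are not literally in \cite{tesler-wilson} or \cite{rw}; they are constructed in Proposition~\ref{prop:omp} using the leaning-stack and decorated-rise reformulations from Section~\ref{sec:comb}. The references \cite{rw} and \cite{tesler-wilson} supply, respectively, the equidistribution of the three OSP statistics and the evaluation of the $\Delta^{\prime}$ side.
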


It is notable that $\valpoly_{n,k}(x;0,q)$ is not included in the list above; we will explain why this case has proved more difficult than the others at the end of this section. We note that \cite{tesler-wilson} also contains a proof that
\begin{align}
\left. \left\langle \Delta^{\prime}_{e_k} e_n, p_{1^n} \right\rangle \right|_{q=1} &= \left\langle \risepoly_{n,k}(x;1,t), p_{1^n} \right\rangle .
\end{align}

The \emph{ordered set partitions} of order $n$ with $k$ blocks are partitions of the set $\{1,2,\ldots,n\}$ into $k$ subsets (called blocks) with some order on the blocks. We write this set as $\osp{n}{k}$. More generally, given a composition $\alpha$ of length $n$, the ordered multiset partitions $\osp{\alpha}{k}$ are the partitions of the multiset $\{i^{\alpha_i}: 1 \leq i \leq n\}$ into $k$ ordered blocks. In \cite{tesler-wilson}, the third author showed that 
\begin{align}
 \left. \Delta^{\prime}_{e_k} e_n \right|_{M_{1^n}, \, t=0} &=   \left. \Delta^{\prime}_{e_k} e_{n} \right|_{M_{1^n}, \, q=0, \, t=q} = \sum_{\pi \in \osp{n}{k+1}} q^{\inv(\pi)} 
\end{align}
where $\inv(\pi)$ counts the number of pairs $a > b$ such that $a$'s block is strictly to the left of $b$'s block in $\pi$ and $b$ is minimal in its block in $\pi$. For example, $15|23|4$ has two inversions, between the 5 and the 2 and the 5 and the 4. We claim that setting one of $q$ or $t$ equal to zero in our combinatorial interpretations also yields a sum involving ordered partitions. To make this more precise, we define three more statistics on ordered multiset partitions. 

First, given some $\pi \in \osp{\alpha}{k}$ we number $\pi$'s blocks $\pi_1, \pi_2, \ldots, \pi_k$ from left to right. Let $\pi^h_i$ be the $h$th smallest element in $\pi_i$, beginning at $h=0$. Then the \emph{diagonal inversions} of $\pi$, written $\Dinv(\pi)$, are the triples
\begin{align}
\{(h, i, j) : 1 \leq i < j \leq k, \ \pi^h_i > \pi^h_j \} \cup \{(h, i, j): 1 \leq i < j \leq k, \ \pi^h_i < \pi^{h+1}_j \} .
\end{align}
The triples of the first type are \emph{primary diagonal inversions}, and the triples of the second type are \emph{secondary diagonal inversions}. We set $\dinv(\pi)$ to be the cardinality of $\Dinv(\pi)$. 

To define the \emph{major index} of $\pi$, we consider the permutation $\sg = \sg(\pi)$ obtained by writing each block of $\pi$ in decreasing order. Then we recursively form a word $w$ by setting $w_0=0$ and $w_i = w_{i-1} + \chi(\sg_i \text{ is minimal in its block in } \pi)$. Then we set
\begin{align}
\maj(\pi) &= \sum_{i : \ \sg_i > \sg_{i+1} } w_i .
\end{align}

Finally, we define the \emph{minimum major index} of $\pi$ as follows. We begin by writing the elements of $\pi_k$ in increasing order from left to right. Then, recursively for $i = k-1$ to $1$, we choose $r$ to be the largest element in $\pi_i$ that is less than or equal to the leftmost element in $\pi_{i+1}$, as previously recorded. If there is no such $r$, we write $\pi_i$ in increasing order. If there is such an $r$, beginning with $\pi_i$ in increasing order, we cycle its elements until $r$ is the rightmost element in $\pi_i$. We write down $\pi_i$ in this order. We continue this process until we have processed each block of $\pi$. For example, consider the ordered multiset permutation $\pi = 13|23|14|234$. Processing the blocks of $\pi$ from right to left, we obtain $312341234$.  We consider the result as a permutation, which we denote $\tau = \tau(\pi)$, and define
\begin{align}
\minimaj(\pi) = \sum_{i: \ \tau_i  > \tau_{i+1}} i
\end{align}
i.e.\ the major index of the permutation $\tau$. The name $\minimaj$ comes from the fact that $\minimaj(\pi)$ is equal to the minimum major index achieved by any permutation that can be obtained by permuting elements within the blocks of $\pi$. 

\begin{prop}
\label{prop:omp}
\begin{align}
\label{omp-dinv}
\left. \risepoly_{n,k}(x;q,0) \right|_{M_{\alpha}}  &= \sum_{\pi \in \osp{\alpha}{k+1}} q^{\dinv(\pi)} \\ 
\label{omp-maj}
\left. \risepoly_{n,k}(x;0,q) \right|_{M_{\alpha}}  &= \sum_{\pi \in \osp{\alpha}{k+1}} q^{\maj(\pi)} \\ 
\label{omp-inv}
\left. \valpoly_{n,k}(x;q,0) \right|_{M_{\alpha}}  &= \sum_{\pi \in \osp{\alpha}{k+1}} q^{\inv(\pi)} \\
\label{omp-minimaj}
\left. \valpoly_{n,k}(x;0,q) \right|_{M_{\alpha}}  &= \sum_{\pi \in \osp{\alpha}{k+1}} q^{\minimaj(\pi)} .
\end{align}
Since taking the coefficient of $M_{1^n}$ is equivalent to taking the inner product with $p_{1^n}$ for a symmetric function in $\Lambda^{(n)}$, this completes the proof of Theorem \ref{thm:zero}.
\end{prop}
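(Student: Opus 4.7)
The plan is to combine the leaning-stack formulations of Proposition \ref{prop:stacks} with a direct bijection (for the $t=0$ cases) and with results from \cite{rw} and \cite{rhoades-minimaj} (for the $q=0$ cases). Each of the four identities in the proposition arises by setting exactly one of $q$ or $t$ to zero in
\begin{align*}
\risepoly_{n,k}(x;q,t) &= \sum_{P \in \ldyck^{\stack}_{n,k}} q^{\hinv(P)} t^{\area(P)} x^P, \\
\valpoly_{n,k}(x;q,t) &= \sum_{P \in \ldyck^{\stack}_{n,k}} q^{\winv(P)} t^{\area(P)} x^P,
\end{align*}
which restricts the sum to paths with $\area = 0$, $\hinv = 0$, or $\winv = 0$.

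For the $t=0$ identities (\ref{omp-dinv}) and (\ref{omp-inv}), the survivors are exactly the paths that hug their leaning stack. Such paths are in bijection with $\osp{\alpha}{k+1}$: the stack shape together with the strictly-increasing labels within each column encode an OMP, with stack column $c$ corresponding to block $k+2-c$ of the partition. The right-to-left identification is forced by the conventions in $\dinv(\pi)$ and $\inv(\pi)$, as can be verified in a small example. Under this bijection $\height_i$ resets to $0$ at each column boundary and equals the position of $\lab_i$ within its block, so the primary and secondary terms of $\hinv(P)$ translate directly to the primary and secondary terms of $\dinv(\pi)$, giving (\ref{omp-dinv}). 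For (\ref{omp-inv}), every $\areai_i$ vanishes on a hugging path, and the $-(n-k-1)$ correction in the definition of $\winv$ exactly cancels the contribution of same-column pairs, leaving only pairs $(i,j)$ where $\lab_i$ is a block-minimum and $\lab_j > \lab_i$ sits in a strictly earlier block; this count is precisely $\inv(\pi)$.

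For the $q=0$ identities (\ref{omp-maj}) and (\ref{omp-minimaj}), the restrictions $\hinv(P)=0$ and $\winv(P)=0$ do not force the paths to hug the stack; instead $\area$ persists and must match $\maj(\pi)$ or $\minimaj(\pi)$. For (\ref{omp-maj}) I would invoke the bijection of \cite{rw}, which, translated through $\phi_{n,k}$ from the fall-decorated to the stack model, pairs $\hinv = 0$ stack paths (equivalently, $\dinv = 0$ fall-decorated labeled Dyck paths) with OMPs and sends $\area$ to $\maj$. For (\ref{omp-minimaj}) the natural setting is the densely labeled model of Section \ref{ssec:dense}: the composition $\theta_{n,k} \circ \psi_{n,k}$ transports the $\winv = 0$ condition into an analogous condition there, at which point Rhoades' construction in \cite{rhoades-minimaj} identifies the resulting objects with OMPs weighted by $\minimaj$. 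In both $q=0$ cases, the verification reduces to checking that the cited bijection respects area in the desired way.

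The main obstacle is expected to be identity (\ref{omp-minimaj}). Rhoades' statistic $\minimaj(\pi)$ is defined via a recursive cyclic rearrangement inside each block to produce a permutation $\tau(\pi)$ with $\maj(\tau(\pi)) = \minimaj(\pi)$; tracking this cyclic shift through the composed bijection $\theta_{n,k} \circ \psi_{n,k}$ and confirming that it matches exactly the area statistic on a $\winv = 0$ densely labeled path requires setting up a delicate dictionary between the cyclic shifts in Rhoades' construction and the column structure of the path. Once this dictionary is established, the remaining identifications are essentially bookkeeping, but assembling it explicitly is the technical core of the argument.
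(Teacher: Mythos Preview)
Your treatment of the $t=0$ identities \eqref{omp-dinv} and \eqref{omp-inv} is essentially what the paper does: both restrict to area-zero objects (the paper uses the densely labeled model for \eqref{omp-inv} rather than the stack model, but this is cosmetic since $\theta_{n,k}$ identifies them), and your cancellation argument for the $-(n-k-1)$ term is correct.

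For \eqref{omp-maj} you defer to \cite{rw}, but the paper instead gives a short self-contained bijection directly in the \emph{rise-decorated} model (Subsection~\ref{ssec:decorating}), not the stack model. The point is that a labeled Dyck path with $\dinv(P)=0$ must have weakly increasing area sequence, with $\areai_{i+1}>\areai_i$ exactly when $\lab_{i+1}>\lab_i$; reading the labels from top to bottom produces a multiset permutation $\sigma$, and joining adjacent entries at the positions in $R$ yields the blocks of $\pi$. One then checks that $\area^{-}((P,R))=\maj(\pi)$ from the definitions. This is simpler than routing through $\phi_{n,k}$ and an external reference.

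For \eqref{omp-minimaj} there is a genuine misattribution in your plan. What Rhoades proves in \cite{rhoades-minimaj} is the equidistribution statement of Proposition~\ref{prop:minimaj}, i.e.\ that $\minimaj$, $\dinv$, $\maj$, and $\inv$ all have the same distribution on $\osp{\alpha}{k+1}$; he does \emph{not} construct a bijection from $\winv=0$ densely labeled Dyck paths to OMPs. Since \eqref{omp-minimaj} concerns $\valpoly_{n,k}(x;0,q)$ while \eqref{omp-inv} concerns $\valpoly_{n,k}(x;q,0)$, Rhoades' equidistribution cannot be combined with \eqref{omp-inv} to yield \eqref{omp-minimaj}. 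The paper instead constructs the needed bijection $\gamma_{\alpha,k}$ from scratch in Appendix~\ref{app:minimaj}: one processes the runs of $\tau(\pi)$ from right to left, inserting the entries of run $i$ into squares at area exactly $i$, so that $\area(\gamma_{\alpha,k}(\pi))=\sum_i i\cdot(\text{length of run }i)=\maj(\tau)=\minimaj(\pi)$; a separate argument characterizes exactly which densely labeled paths have $\winv=0$ and shows the image is correct. Your final paragraph correctly anticipates that this bijection is the technical core, but it is the paper's own contribution, not Rhoades'.
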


\begin{proof}
To prove \eqref{omp-dinv}, it is easiest to use the interpretation of $\risepoly_{n,k}(x;q,0)$ involving leaning stacks given in Subsection \ref{ssec:stacks}, which gives
\begin{align}
\left. \risepoly_{n,k}(x;q,0) \right|_{M_{\alpha}} &= \sum_{P} q^{\hinv(P)}
\end{align}
where the sum is over $P \in \ldyck^{\stack}_{n,k}$ with $\area(P) = 0$ and $x^P = \prod_{i=1}^{\ell(\alpha)} x_i^{\alpha_i}$. We consider the map from such paths $P$ to ordered multiset partitions $\pi \in \osp{\alpha}{k+1}$ where $\pi_i$ consists of the elements in the $i$th column of $P$, counting from right to left. This is clearly a bijection, and it follows from the definitions that $\hinv(P) = \dinv(\pi)$, proving \eqref{omp-dinv}. 

To prove \eqref{omp-maj}, we consider the interpretation of $\risepoly_{n,k}(x;q,t)$ from Subsection \ref{ssec:decorating} in which we decorated double rises. This allows us to write 
\begin{align}
\left. \risepoly_{n,k}(x;0,q) \right|_{M_{\alpha}} &= \sum_{P} q^{\area(P)}
\end{align}
where the sum is over $(P,R) \in \ldyck^{\rise}_{n,n-k-1}$ with $\dinv(P) = 0$ and  $x^P = \prod_{i=1}^{\ell(\alpha)} x_i^{\alpha_i}$. 
We note that $P$ can only have $\dinv(P) =0 $ if $\areai_i(P)$ is weakly increasing from bottom to top; furthermore, $\areai_{i+1}(P) > \areai_i(P)$ if and only if $\lab_{i+1}(P) > \lab_i(P)$. To form an ordered multiset partition from such a path $P$, we record the labels of $P$ from top to bottom as a multiset permutation $\sg$. Then, for each $i \in R$, we join the corresponding entry of $\sg$ with the entry to its right to form a block. This map gives a bijection to $\osp{\alpha}{k+1}$ and sends $\area$ to $\maj$.

For \eqref{omp-inv}, we consider the interpretation of $\valpoly_{n,k}(x;q,t)$ from Subsection \ref{ssec:dense} involving densely labeled Dyck paths, which implies
\begin{align}
\left. \valpoly_{n,k}(x;q,0) \right|_{M_{\alpha}} &= \sum_{P} q^{\winv(P)}
\end{align}
where the sum is over $P \in \ldyck^{\dense}_{n,k}$ with $\area(P) = 0$ and $x^P =  \prod_{i=1}^{\ell(\alpha)} x_i^{\alpha_i}$. $\area(P) = 0$ implies that the underlying Dyck path of $P$ is the path $(NE)^{k+1}$ that never leaves the diagonal. To form $\pi \in \osp{\alpha}{k}$, we simply make each label set of $P$ from right to left into a block. This is a bijection and it is clear that $\winv(P) = \inv(\pi)$. 

The proof of \eqref{omp-minimaj} is quite technical, so we have placed it in Appendix \ref{app:minimaj}.

\end{proof}

The reason that $\valpoly_{n,k}(x;0,q)$ does not appear in Theorem \ref{thm:zero} is that the $\minimaj$ statistic behaves quite differently. In an earlier preprint, we conjectured the following, which has since been proved by Brendon Rhoades \cite{rhoades-minimaj}.

\begin{prop}
\label{prop:minimaj}
\begin{align}
\sum_{\pi \in \osp{\alpha}{k+1}} q^{\minimaj(\pi)} &= \sum_{\pi \in \osp{\alpha}{k+1}} q^{\dinv(\pi)} 
=  \sum_{\pi \in \osp{\alpha}{k+1}} q^{\maj(\pi)} 
=  \sum_{\pi \in \osp{\alpha}{k+1}} q^{\inv(\pi)} .
\end{align}
\end{prop}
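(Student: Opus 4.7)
My plan is to establish the four equidistributions on ordered multiset partitions via a sequence of bijections, using $\inv$ as the anchor statistic. By Proposition \ref{prop:omp}, three of the four generating functions arise as specializations of $\risepoly_{n,k}(x;q,t)$ or $\valpoly_{n,k}(x;q,t)$ at $t=0$ or $q=0$; where possible, I would try to lift the desired bijection to the level of labeled Dyck paths and transport it back to ordered multiset partitions via the bijections of Section~\ref{sec:comb}.

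First, to prove $\sum_{\pi} q^{\dinv(\pi)} = \sum_{\pi} q^{\inv(\pi)}$, I would work inside $\ldyck^{\stack}_{n,k}$ and $\ldyck^{\dense}_{n,k}$ at $\area = 0$, trying to convert $\hinv$ into $\winv$ via a statistic-preserving rearrangement of labels. When $\area = 0$ the underlying lattice path is determined by the stack, so only the labels can vary, and one can hope to match entries position by position. An alternative would be a direct bijection $\Phi$ on $\osp{\alpha}{k+1}$ that redistributes entries among blocks so that $\dinv$-triples become $\inv$-pairs involving block minima.

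Second, to prove $\sum_{\pi} q^{\maj(\pi)} = \sum_{\pi} q^{\inv(\pi)}$, I would adapt Foata's fundamental transformation. Since $\maj(\pi)$ is defined through the permutation $\sg(\pi)$ (obtained by writing each block of $\pi$ in decreasing order), Foata's bijection converts $\maj(\sg(\pi))$ to $\inv$ of a new permutation, and the task becomes to verify that this new permutation is itself of the form $\sg(\pi')$ for some $\pi' \in \osp{\alpha}{k+1}$ with $\inv(\pi') = \maj(\pi)$. This amounts to tracking which positions are block minima across each step of the transformation, and possibly modifying Foata's procedure to preserve that data.

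Third, to prove $\sum_{\pi} q^{\minimaj(\pi)} = \sum_{\pi} q^{\maj(\pi)}$, I would construct a bijection $\Psi$ on $\osp{\alpha}{k+1}$ that essentially reverses the cycling procedure used to define $\tau(\pi)$: starting from $\tau(\pi)$, one recovers a candidate permutation whose descent positions encode $\minimaj(\pi)$, and the challenge is to identify this permutation with $\sg(\pi')$ for some $\pi' \in \osp{\alpha}{k+1}$. The main obstacle of the entire proof is precisely this last case: the cycling at block $i$ depends on the leftmost entry of block $i+1$ (which itself may already have been cycled), so any bijection must encode this recursive dependency, probably through an auxiliary object such as a sequence of nested pointings or a labeled lattice path that records the cycling history, and then translate it back into a valid ordered multiset partition. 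The careful execution of such an approach constitutes the core of Rhoades' argument in \cite{rhoades-minimaj}.
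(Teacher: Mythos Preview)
The paper does not contain its own proof of this proposition. As stated just before Proposition~\ref{prop:minimaj}, the result was a conjecture in an earlier preprint and was subsequently proved by Rhoades in \cite{rhoades-minimaj}; the equalities among $\dinv$, $\maj$, and $\inv$ (without $\minimaj$) are attributed to \cite{rw}. So there is no in-paper argument to compare against, and your closing sentence deferring to Rhoades for the $\minimaj$ case is, in effect, exactly what the paper itself does.

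That said, your proposal is a plan, not a proof, and each of the three parts has a genuine gap. For $\dinv = \inv$: at $\area = 0$ in $\ldyck^{\stack}_{n,k}$ the underlying path is \emph{not} determined by the stack alone (the path must hug the stack, but different stacks give different column structures), and you have not specified any rule for rearranging labels that would convert $\hinv$-pairs into $\winv$-pairs. For $\maj = \inv$: the statistic $\maj(\pi)$ defined here is not the ordinary major index of $\sigma(\pi)$ but a weighted version using the word $w$ that records block minima; a naive application of Foata's bijection does not respect this weighting, so ``tracking block minima through Foata'' is the entire content of the argument, and you have not indicated how to do it. For $\minimaj = \maj$: you correctly isolate the recursive dependency in the cycling as the main obstruction, but what you write is a description of the difficulty rather than a method for overcoming it.

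In short, your outline is reasonable as a roadmap and correctly identifies where the real work lies, but none of the three bijections is actually constructed. Since the paper itself outsources the proof to \cite{rhoades-minimaj} (and \cite{rw} for the non-$\minimaj$ equalities), a complete write-up would either reproduce those arguments or simply cite them, as the paper does.
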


\section{Results at $t=1/q$}
\label{sec:t=1/q}

In this section, we consider the special case $t=1/q$. As in the Shuffle Conjecture, this case is much more approachable from the symmetric function point of view than the general setting. In particular, it is not difficult to obtain a plethystic formula for $\Delta_{e_k} e_n$ at $t=1/q$. We prove the plethystic formula below and then use it to show that $\Delta_{e_k} e_n$ is Schur positive at $t=1/q$ up to a power of $q$.

\begin{thm}
\label{thm:t=1/q}
For any symmetric function $f \in \Lambda^{(k)}$, 
\begin{align}
\left. \Delta_{f} e_{n} \right|_{t=1/q} =& \frac{f[\qint{n}] e_{n} [ X \qint{k+1} ]}{q^{k(n-1)} \qint{k+1} } .
\end{align}
\end{thm}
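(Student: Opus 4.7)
The plan is to expand $e_n$ in the modified Macdonald basis, apply $\Delta_f$ term-by-term, and specialize at $t = 1/q$, using the fact that at this specialization the coefficient of $\tilde{H}_{\mu}$ in $e_n$ vanishes away from hook shapes. Concretely, I would start from the standard identity
\[
e_n = \sum_{\mu \vdash n} \frac{M B_{\mu} \Pi_{\mu}}{w_{\mu}} \tilde{H}_{\mu},
\]
with $M = (1-q)(1-t)$, $\Pi_{\mu} = \prod_{c \in \mu,\, c \neq (0,0)}\!(1 - q^{a'(c)} t^{\el'(c)})$, and $w_{\mu}$ the usual Macdonald normalization. Applying $\Delta_f$ gives
\[
\Delta_f e_n \,=\, \sum_{\mu \vdash n} \frac{M B_{\mu} \Pi_{\mu}\, f[B_{\mu}]}{w_{\mu}} \tilde{H}_{\mu}.
\]

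The key observation is that $\Pi_{\mu}(q, 1/q) = 0$ for any $\mu$ containing the cell $(1,1)$, since that cell contributes the factor $(1 - q \cdot q^{-1}) = 0$. Thus only hook shapes $\mu = (n-r, 1^r)$ with $0 \leq r \leq n-1$ survive at $t = 1/q$. A direct computation over the cells of such a hook yields
\[
B_{(n-r,1^r)}(q, 1/q) \,=\, \sum_{i=0}^{n-r-1} q^i + \sum_{j=1}^r q^{-j} \,=\, q^{-r}\, \qint{n}.
\]
Since $f$ is homogeneous of degree $k$, the plethystic scaling property (following from $p_j[cY] = c^j p_j[Y]$ extended multiplicatively) gives $f[B_{\mu}(q,1/q)] = q^{-rk} f[\qint{n}]$. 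Factoring out this common scalar reduces the theorem to the identity
\[
\sum_{r=0}^{n-1} q^{-rk}\, \alpha_r(X, q) \,=\, \frac{e_n[X\,\qint{k+1}]}{q^{k(n-1)}\, \qint{k+1}},
\]
where $\alpha_r(X,q) = (M B_{\mu} \Pi_{\mu} \tilde{H}_{\mu}/w_{\mu})\big|_{t = 1/q,\, \mu = (n-r, 1^r)}$ depends on neither $f$ nor $k$.

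To finish, I would expand the right-hand side $e_n[X\,\qint{k+1}]$ using the same Macdonald-basis identity (with $X$ replaced by $X\,\qint{k+1}$) and the plethystic expansion $\tilde{H}_{\mu}[X\,\qint{k+1}] = \sum_{\lambda \vdash n} c_{\lambda}^{\mu}(q,t)\, p_{\lambda}[X]\, p_{\lambda}[\qint{k+1}]$, again invoking the $\Pi_{\mu}$-vanishing to restrict to hooks, and then match the resulting hook sums term by term. The main obstacle is this final matching step: it requires the explicit hook-shape Macdonald formulas and careful bookkeeping of powers of $q$ to produce exactly the normalization $q^{-k(n-1)}/\qint{k+1}$. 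As a useful fallback, note that both sides of the claimed identity already factor as $f[\qint{n}]$ times a universal function of $X$, $q$, and $k$; consequently, verifying the identity for a single convenient choice of $f$ in each degree $k$ (for instance $f = p_1^k$, so that $\Delta_f = \Delta_{p_1}^k$ acts as iterated multiplication by $B_{\mu}$) would suffice to pin down the universal factor.
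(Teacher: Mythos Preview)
Your overall plan---expand $e_n$, reduce to hooks at $t=1/q$, factor out $f[\qint{n}]$---matches the paper's; your hook reduction via $\Pi_\mu(q,1/q)=0$ whenever $\mu$ contains a $2\times 2$ box is a valid alternative to the paper's use of $s_\mu[1-q]=0$ off hooks. The gap is in the final matching step, where neither of your two routes closes. Expanding $e_n[X\qint{k+1}]$ in the Macdonald basis and matching via power sums requires relating $\tilde{H}_\mu[X\qint{k+1};q,1/q]$ to $\tilde{H}_\mu[X;q,1/q]$, which drags in the Macdonald power-sum coefficients with no evident cancellation. And the fallback is circular: once both sides factor as $f[\qint{n}]$ times an $f$-independent quantity, plugging in any particular $f$ and dividing through simply reproduces the same $f$-free identity $\sum_r q^{-rk}\alpha_r = e_n[X\qint{k+1}]/\big(q^{k(n-1)}\qint{k+1}\big)$; it would only help if you had an \emph{independent} evaluation of, say, $\Delta_{p_1}^k e_n|_{t=1/q}$, and you do not supply one.

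The paper closes the gap with one ingredient you are missing: at $t=1/q$ the modified Macdonald polynomial $\tilde{H}_\mu$ is a scalar multiple of $s_\mu[X/(1-q)]$. This makes your $\alpha_r$ explicit, and the entire hook sum becomes the Cauchy expansion $e_n = e_n\!\left[(1-q)\cdot \tfrac{X}{1-q}\right] = \sum_\mu s_{\mu'}[X/(1-q)]\,s_\mu[1-q]$, restricted to hooks where $s_{(n-r,1^r)}[1-q]=(-q)^r(1-q)$. After inserting the eigenvalue factor and regrouping, the coefficient $(-q)^r(1-q)$ becomes $(-q^{k+1})^r(1-q^{k+1})/\big(q^{k(n-1)}\qint{k+1}\big)$, and the \emph{same} Cauchy identity with $q$ replaced by $q^{k+1}$ reassembles the sum as $e_n\!\left[(1-q^{k+1})\cdot\tfrac{X}{1-q}\right] = e_n[X\qint{k+1}]$.
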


\begin{proof}
First we note that
\begin{align}
\tilde{H}_{\mu}[X; q, 1/q] =& \   C s_{\mu}\left[\frac{X}{1-q}\right]
\end{align}
for a constant $C$. This fact can be derived from \cite{macdonald}. We use Cauchy's Formula to write
\begin{align}
e_{n}[X] =& \   e_{n} \left[(1-q)\frac{X}{1-q}\right] = \sum_{\mu \vdash n} s_{\mu^{\prime}}\left[\frac{X}{1-q}\right] s_{\mu}[1-q].
\end{align}
For any monomial $u$, $s_{\mu}[1-u]$ is zero if $\mu$ is not a hook shape and 
\begin{align}
s_{\mu}[1-u] =& \   (-u)^{r}(1-u)
\end{align}
if $\mu = (n-r, 1^{r})$ \cite{macdonald}. Therefore, summing over hook shapes $\mu$, we have
\begin{align}
\label{en}
e_{n}[X] =& \   \sum_{\mu = (n-r, 1^{r})} s_{\mu^{\prime}} \left[\frac{X}{1-q}\right] (-q)^{r}(1-q) .
\end{align}
Next, we note that, for $\mu = (n-r, 1^r)$, $\mu^{\prime} = (r+1, 1^{n-r-1})$ and
\begin{align}
B_{\mu^{\prime}}(q,1/q) =& \   q^{-(n-r-1)} \qint{n} .
\end{align}
Therefore
\begin{align}
\label{eigenvalue}
f[B_{\mu^{\prime}}(q, 1/q)] =& \   q^{-k(n-r-1)}f[\qint{n}].
\end{align}
Combining \eqref{en} with \eqref{eigenvalue}, we see that $\left. \Delta_{f} e_{n}[X] \right|_{t=1/q}$ is equal to
\begin{align}
&\sum_{\mu = (n-r, 1^{r})} q^{-k(n-r-1)}(-q)^{r}(1-q) f[\qint{n}] s_{\mu^{\prime}} \left[\frac{X}{1-q}\right] \\
=& \   \frac{f[\qint{n}]}{q^{k(n-1)} \qint{k+1}}  \sum_{\mu = (n-r, 1^{r})} (-q^{k+1})^{r}(1-q^{k+1}) s_{\mu^{\prime}} \left[ \frac{X}{1-q} \right] .
\end{align}
Applying Cauchy's Formula again, we get 
\begin{align}
\sum_{\mu = (n-r, 1^{r})} (-q^{k+1})^{r}(1-q^{k+1}) s_{\mu^{\prime}} \left[\frac{X}{1-q}\right] =& \   e_{n} \left[ X \qint{k+1} \right] . 
\end{align}
\end{proof}

From Theorem \ref{thm:t=1/q}, it is easy to compute
\begin{align}
\left. \Delta_{e_k} e_{n} \right|_{t=1/q} &= \frac{q^{\binom{k}{2} - k(n-1)}}{\qint{k+1}} \qbinom{n}{k} e_{n}[X \qint{k+1}].
\end{align}
In work in preparation, the third author uses this formula along with combinatorial recursions to prove both versions of the Shuffle Conjecture after setting $t=1/q$ and taking the scalar product with $e_{n-r} h_r$ or $h_{n-r} h_r$ for any nonnegative integer $r$. We can also use Theorem \ref{thm:t=1/q} along with a recent result of Garsia, Leven, Wallach, and Xin to prove a Schur positivity result for our symmetric function at $t=1/q$.

\begin{cor}
\label{cor:schur_positivity}
$\left. q^{k(n-1) - \binom{k}{2}}\Delta_{e_k} e_n \right|_{t=1/q}$ is a Schur positive symmetric polynomial.
\end{cor}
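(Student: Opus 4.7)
The plan is to derive the claim directly from the plethystic formula of Theorem \ref{thm:t=1/q}, combined with a factorization whose two factors are each positive in the appropriate senses, and then to invoke the cited result of Garsia, Leven, Wallach, and Xin to handle the nontrivial factor. First, I would specialize Theorem \ref{thm:t=1/q} to $f = e_k$, which gives
\begin{align*}
\left. \Delta_{e_k} e_{n} \right|_{t=1/q} &= \frac{e_k[\qint{n}]\, e_{n}[X \qint{k+1}]}{q^{k(n-1)}\, \qint{k+1}}.
\end{align*}
Using the standard identity $e_k[\qint{n}] = q^{\binom{k}{2}} \qbinom{n}{k}$, this rearranges to
\begin{align*}
q^{k(n-1)-\binom{k}{2}} \left. \Delta_{e_k} e_n \right|_{t=1/q} &= \qbinom{n}{k} \cdot \frac{e_{n}[X \qint{k+1}]}{\qint{k+1}}.
\end{align*}

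Next, I would observe that the Gaussian binomial $\qbinom{n}{k}$ is a polynomial in $q$ with nonnegative integer coefficients. Since the product of such a polynomial in $q$ with any Schur positive symmetric polynomial over $\mathbb{Q}(q)$ remains Schur positive (the Schur expansion of the product just scales each coefficient by a polynomial in $q$ with nonnegative coefficients), the problem reduces to showing that
\begin{align*}
\frac{1}{\qint{k+1}}\, e_{n}[X \qint{k+1}]
\end{align*}
is a Schur positive symmetric polynomial.

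For this final step, I would appeal to the work of Garsia, Leven, Wallach, and Xin \cite{rational-t=1/q}, in which the rational Shuffle Conjecture is established at $t = 1/q$. Their results give an explicit positive Schur expansion (or equivalently a positive combinatorial formula in terms of parking function objects) for exactly this expression, which is the $(k+1,n)$-case of the rational Shuffle quantity at $t = 1/q$. Substituting this expansion back and multiplying by the manifestly $q$-positive factor $\qbinom{n}{k}$ completes the proof.

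The main obstacle is conceptual rather than computational: all of the work is contained in matching our specialization to the form of the Schur-positive expression in \cite{rational-t=1/q}. If the statement there is phrased in terms of $\nabla$-like operators on a rational slope, one needs to verify that the normalization $\frac{1}{[k+1]_q} e_n[X[k+1]_q]$ indeed corresponds to a $(k+1,n)$ or $(n,k+1)$ rational Shuffle expression at $t=1/q$; once that identification is made, the rest of the argument is a routine bookkeeping of powers of $q$ and of the positivity of $\qbinom{n}{k}$.
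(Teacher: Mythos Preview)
Your reduction has a genuine gap in the non-coprime case. The result in \cite{rational-t=1/q} does \emph{not} assert that $\frac{1}{\qint{k+1}} e_n[X\qint{k+1}]$ is Schur positive for arbitrary $k$ and $n$; what Theorem~2.1 there gives is Schur positivity of
\[
\frac{\qint{d}}{\qint{k+1}}\, e_n\bigl[X\qint{k+1}\bigr], \qquad d = \gcd(k+1,n).
\]
When $d>1$ your proposed factor $\frac{1}{\qint{k+1}} e_n[X\qint{k+1}]$ is not even a polynomial in $q$. For instance, take $n=2$, $k=1$: then $e_2[X(1+q)] = (1+q+q^2)\,s_{11} + q\,s_2$, and $(1+q+q^2)/(1+q)$ is not a polynomial. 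So the ``rational Shuffle at $t=1/q$'' identification you hope for simply does not exist outside the coprime situation, and your final step fails.

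The paper's proof confronts exactly this issue. It factors the expression as
\[
\frac{1}{\qint{d}}\qbinom{n}{k} \;\cdot\; \frac{\qint{d}}{\qint{k+1}}\, e_n\bigl[X\qint{k+1}\bigr],
\]
so that the second factor is Schur positive by \cite{rational-t=1/q}. The remaining work is then to show that $\frac{1}{\qint{d}}\qbinom{n}{k}\in\mathbb{N}[q]$: the paper uses the $q$-Lucas theorem to prove that each cyclotomic factor $\Phi_p(q)$ of $\qint{d}$ divides $\qbinom{n}{k}$, and then invokes unimodality of $\qbinom{n}{k}$ (via Proposition~2.4 of \cite{rational-t=1/q}) to conclude nonnegativity of the quotient. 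That divisibility argument is the missing ingredient in your proposal; once you restrict to $\gcd(k+1,n)=1$ your argument is fine and coincides with the paper's.
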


\begin{proof}
Let $d = \gcd(k+1,n)$. Then Theorem 2.1 in \cite{rational-t=1/q} implies that
\begin{align}
\frac{\qint{d}}{\qint{k+1}} e_{n} \left[ X \qint{k+1} \right]
\end{align}
is a Schur positive symmetric polynomial. By Theorem \ref{thm:t=1/q}, it is enough to show that $\frac{1}{\qint{d}} \qbinom{n}{k} \in \mathbb{N}[q]$. Furthermore, Proposition 2.4 in \cite{rational-t=1/q} implies that if $\frac{1}{\qint{d}} \qbinom{n}{k}$ is a polynomial then it must have nonnegative coefficients (since $\qbinom{n}{k}$ is known to be a unimodal positive polynomial). Therefore we only need to show that $\frac{1}{\qint{d}} \qbinom{n}{k}$ is a polynomial. 

To accomplish this, we will use the $q$-Lucas Theorem, apparently first proved in \cite{q-lucas} and given a nice combinatorial proof in \cite{sagan-congruence}. To state the $q$-Lucas Theorem, given integers $n$, $k$, and $p$, we divide $n$ and $k$ by $p$ to obtain $n = n_1 p + n_0$ and $k = k_1 p + k_0$. Then 
\begin{align}
\qbinom{n}{k} \equiv \binom{n_1}{k_1} \qbinom{n_0}{k_0} \pmod{ \Phi_p(q) } .
\end{align}
where $\Phi_p(q)$ is $p$th cyclotomic polynomial. Consider any $p$ such that $\Phi_p(q)$ divides $\qint{d}$. If we can show that all such $\Phi_p(q)$ divide $\qbinom{n}{k}$, we are done. Since $p$ divides $d$ and $d$ divides both $n$ and $k+1$, $p$ divides $n$ but it does not divide $k$. This means that $n_0 = 0$ and $k_0 > 0$. By the $q$-Lucas Theorem,
\begin{align}
\qbinom{n}{k} &\equiv \binom{n_1}{k_1} \qbinom{n_0}{k_0} \equiv 0 \pmod{ \Phi_p(q)} 
\end{align} 
so $\Phi_p(q)$ divides $\qbinom{n}{k}$. 
\end{proof}

\section{Proof of the Rise Version at $k=1$}
\label{sec:k=1}
In this section, we prove the following special case of the Rise Version of the Delta Conjecture.

\begin{thm}
\label{thm:k=1}
For any positive integer $n$,
\begin{align}
\Delta_{e_{1}} e_n &= \risepoly_{n,0}(x;q,t) + \risepoly_{n,1}(x;q,t) \\
&= \sum_{m=0}^{\lfloor n/2 \rfloor} s_{2^m, 1^{n-2m}} \sum_{p=m}^{n-m} \qtint{p} .
\end{align}
This verifies \eqref{rise-intro} from Conjecture \ref{conj:delta} for $k=1$. 
\end{thm}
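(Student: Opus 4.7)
The plan is to verify both sides of the claimed equality have the same Schur expansion $\sum_m s_{(2^m, 1^{n-2m})} \sum_{p=m}^{n-m} \qtint{p}$, computing the combinatorial and the symmetric function sides independently.

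First I would pin down $\risepoly_{n,0}(x;q,t)$ by hand. The coefficient of $z^{n-1}$ in \eqref{rise-intro} is nonzero only when $|\rise(P)| \geq n-1$, which forces $P = N^n E^n$ with strictly increasing labels along its single column. For this unique path one has $\area(P) = \binom{n}{2}$, $\dinv(P) = 0$, and the coefficient of $z^{n-1}$ in $\prod_{i=2}^{n}(1+z/t^{i-1})$ is $t^{-\binom{n}{2}}$, giving $\risepoly_{n,0}(x;q,t) = e_n(x)$ outright.

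For $\risepoly_{n,1}(x;q,t)$, the supporting Dyck paths are either $N^n E^n$ (decorated at $n-2$ of its $n-1$ rises) or $N^{a_1} E^{b_1} N^{a_2} E^{b_2}$ with $1 \leq b_1 \leq a_1 \leq n-1$ (fully decorated). For each fixed underlying path, I would use the decorated formulation of Subsection~\ref{ssec:decorating} to write the inner sum over labelings as an LLT polynomial times an explicit monomial in $t$. Because these underlying paths differ from the staircase in at most one ``bump,'' the associated LLT polynomials are of vertical-strip type, with explicit Schur expansions available via the Haglund--Haiman--Loehr combinatorial formula (or via sign-reversing involutions of the type recently exploited by Carlsson and Mellit). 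Summing over the two families yields a Schur expansion of $\risepoly_{n,0} + \risepoly_{n,1}$ supported on near-hook shapes $(2^m, 1^{n-2m})$.

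On the symmetric function side, I would apply the eigenvalue identity $\Delta_{e_1}\tilde H_\mu = B_\mu(q,t)\tilde H_\mu$ together with a reciprocity identity converting the resulting $\mu$-indexed sum into a Schur-indexed expression. The reciprocity I have in mind is of Cauchy type: expand $e_n$ in the modified Macdonald basis, then use a plethystic substitution (e.g., $e_n[X] \mapsto e_n[X(1-q)(1-t)/((1-q)(1-t))]$ together with a hook-summing identity) to localize the computation onto hook-shaped $\tilde H_\mu$, which contribute only to the two-column shapes $s_{(2^m, 1^{n-2m})}$.

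The main obstacle will be the final matching: showing that both the LLT Schur coefficients and the Macdonald-derived Schur coefficients equal the $(q,t)$-symmetric polynomial $\sum_{p=m}^{n-m} \qtint{p}$. Since the LLT side most naturally produces $q$-weighted sums with the $t$-weighting coming from $\area^{-}$, recovering the symmetric form will likely require either a direct $(q,t)$-symmetry argument for $\risepoly_{n,1}$ (which the leaning-stacks interpretation of Subsection~\ref{ssec:stacks} makes plausible in this low-complexity regime), or a careful telescoping identity relating $q$-binomial sums to the $\qtint{p}$ form.
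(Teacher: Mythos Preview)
Your overall strategy matches the paper's: compute the combinatorial and symmetric sides separately and show each equals $\sum_m s_{(2^m,1^{n-2m})}\sum_{p=m}^{n-m}\qtint{p}$. On the combinatorial side your enumeration is correct, and after it one is summing two-column LLT polynomials, which is exactly what the paper does via the leaning-stack interpretation (the bijection $\phi_{n,1}$ identifies your decorated-path indexing with theirs). The paper makes this concrete by invoking the Carr\'e--Leclerc/van Leeuwen result (Lemma~\ref{lemma:yam}) that the Schur coefficients of a two-column LLT polynomial are read off from Yamanouchi fillings, and then classifying all such fillings via an explicit ``$XY$ diagram'' case analysis. Your appeal to the HHL formula or Carlsson--Mellit machinery would be overkill here; the Yamanouchi-word lemma gives the coefficients directly, and the matching to $\sum_p\qtint{p}$ drops out of the case split with no telescoping or auxiliary $(q,t)$-symmetry argument needed.

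The symmetric-function side, however, has a genuine gap. The expansion of $e_n$ in modified Macdonald polynomials is supported on \emph{all} partitions of $n$ at generic $(q,t)$, not just hooks, and no Cauchy-type plethystic manipulation will localize $\Delta_{e_1}e_n$ onto hook-shaped $\tilde H_\mu$; the hook reduction you may be remembering is special to $t=1/q$ (Section~\ref{sec:t=1/q}). Nor do hook-shaped $\tilde H_\mu$ contribute only to two-column Schur functions. What the paper uses instead is the reciprocity identity (Lemma~\ref{lemma:reciprocity})
\[
\langle \Delta_{e_{d-1}} e_n,\, f\rangle \;=\; \langle \Delta_{\omega f}\, e_d,\, s_d\rangle,
\]
with $d=2$ and $f=s_\lambda$. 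This swaps the roles of $n$ and $d$, so the entire computation lives in $\Lambda^{(2)}$, where there are only the two Macdonald polynomials $\tilde H_{2}$ and $\tilde H_{1,1}$. The Schur coefficient then collapses to $(s_{\lambda'}[1+t]-s_{\lambda'}[1+q])/(t-q)$, which vanishes unless $\lambda'$ has at most two rows and otherwise equals $\sum_{p=m}^{n-m}\qtint{p}$ on the nose. This reciprocity is the missing key lemma; without it the symmetric side does not simplify in the way your plan anticipates.
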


We deal with the symmetric function component of Theorem \ref{thm:k=1} in Subsection \ref{ssec:k=1-sym} and the combinatorial component in Subsection \ref{ssec:k=1-comb}. 

\subsection{The symmetric side}
\label{ssec:k=1-sym}

In this subsection, we prove the ``symmetric side'' of Theorem \ref{thm:k=1}, restated below.

\begin{prop}
\label{prop:k=1-sym}
For any positive integer $n$,
\begin{align}
\Delta_{e_1} e_n &= \sum_{m=0}^{\lfloor n/2 \rfloor} s_{2^m, 1^{n-2m}} \sum_{p=m}^{n-m} \qtint{p} .
\end{align}
\end{prop}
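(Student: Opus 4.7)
The plan is to prove this symmetric-function identity by working in the modified Macdonald basis. Because $\Delta_{e_1}$ is diagonal on $\{\tilde{H}_\mu\}$ with eigenvalue $e_1[B_\mu(q,t)] = B_\mu(q,t)$, I would start from the standard expansion of $e_n$ in the Macdonald basis,
\begin{align*}
e_n \;=\; \sum_{\mu \vdash n}\, c_\mu(q,t)\,\tilde{H}_\mu,
\end{align*}
with coefficients $c_\mu(q,t) = M\,B_\mu(q,t)\,\Pi_\mu(q,t)/w_\mu(q,t)$ in the standard diagonal-harmonics notation (with $M=(1-q)(1-t)$ and $\Pi_\mu,w_\mu$ the usual normalization factors). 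Applying $\Delta_{e_1}$ immediately gives
\begin{align*}
\Delta_{e_1} e_n \;=\; \sum_{\mu \vdash n}\, B_\mu(q,t)\, c_\mu(q,t)\,\tilde{H}_\mu.
\end{align*}

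My next step is to convert this Macdonald expansion into the Schur expansion asserted by the proposition. Since the shapes $(2^m,1^{n-2m})$ are precisely the conjugates of the two-row shapes $(n-m,m)$, I would apply $\omega$ and compute the scalar products $\langle \omega\,\Delta_{e_1} e_n,\, s_{(n-m,m)}\rangle$. The modified $(q,t)$-Kostka numbers $\tilde{K}_{(n-m,m),\mu}(q,t)$ for two-row indexing shapes admit explicit formulas (via the Haglund--Haiman--Loehr filling formula restricted to tableaux with short first row, or via the two-row Pieri rule), so each partition-wise scalar product is in principle computable. A preliminary symmetry check should also rule out contributions to $\langle \Delta_{e_1}e_n, s_\lambda\rangle$ for $\lambda$ outside the hook-conjugate family $\{(2^m,1^{n-2m})\}$.

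The main obstacle I anticipate is assembling the resulting sum of Macdonald-normalization ratios into the clean expression $\sum_{p=m}^{n-m}[p]_{q,t}$. My plan is to handle this by induction on $n$: derive a recursion for $\Delta_{e_1} e_n$ from the Pieri rule for $e_1 \cdot e_n$ combined with a plethystic manipulation that relates multiplication by $e_1$ to the $\Delta_{e_1}$ eigenvalue, and then verify that the right-hand side satisfies the same recursion. The base case $n=1$ is immediate, since both sides equal $s_1$. As a fallback if induction does not close cleanly, one can match Schur coefficients partition by partition using the two-row Kostka formula and recognize the resulting polynomial in $q,t$ as the telescoping $(q,t)$-integer sum; this route is more calculational but avoids needing to pin down the correct compatibility between $\Delta_{e_1}$ and the Pieri rule.
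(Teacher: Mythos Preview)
Your write-up is a plan, not a proof: none of the key computations are actually carried out. Expanding $e_n$ in the Macdonald basis and multiplying by $B_\mu$ is fine, but the substance lies entirely in collapsing the sum
\[
\sum_{\mu\vdash n} B_\mu(q,t)\,c_\mu(q,t)\,\tilde K_{\lambda,\mu}(q,t)
\]
to the claimed closed form, and here you offer only sketches. The inductive step you propose is not well defined: there is no simple compatibility between the Pieri rule for multiplication by $e_1$ and the eigenoperator $\Delta_{e_1}$, since $\Delta_{e_1}$ acts by $B_\mu$ on $\tilde H_\mu$ and the Macdonald Pieri coefficients for $e_1\cdot\tilde H_\mu$ are nontrivial rational functions. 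Your fallback of matching Schur coefficients ``partition by partition'' would require summing the products $B_\mu\,c_\mu\,\tilde K_{\lambda,\mu}$ over all $\mu\vdash n$, and you have not indicated how those normalization ratios would telescope. Nor have you explained the ``preliminary symmetry check'' that kills $s_\lambda$ for $\lambda$ with more than two columns.

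The missing idea is that one can trade the hard computation in degree $n$ for a trivial one in degree $2$. The paper invokes the reciprocity identity
\[
\langle \Delta_{e_{d-1}} e_n,\, f\rangle \;=\; \langle \Delta_{\omega f}\, e_d,\, s_d\rangle,
\]
so with $d=2$ and $f=s_\lambda$ one only needs to expand $e_2$ in the Macdonald basis (two terms), apply $\Delta_{s_{\lambda'}}$, and read off the coefficient of $s_2$. This gives
\[
\langle \Delta_{e_1} e_n,\, s_\lambda\rangle \;=\; \frac{s_{\lambda'}[1+t]-s_{\lambda'}[1+q]}{t-q},
\]
which visibly vanishes unless $\lambda'$ has at most two rows and otherwise equals $\sum_{p=m}^{n-m}[p]_{q,t}$ via the principal specialization of $s_{n-m,m}$. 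If you want to salvage your direct approach, you would need an independent mechanism that forces only two Macdonald eigenvalues to survive; without that, the $n$-term sum does not obviously simplify.
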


Our main tool will be the following reciprocity rule for the operator $\Delta$, which was proved by the first author as Corollary 2 in \cite{schroder}. 

\begin{lemma}[Corollary 2 in \cite{schroder}]
\label{lemma:reciprocity}
For positive integers $d, n$ and any symmetric function $f \in \Lambda^{(n)}$,
\begin{align}
\left\langle \Delta_{e_{d-1}} e_n, f \right\rangle &= \left\langle \Delta_{\omega f} e_d, s_d \right \rangle.
\end{align}
\end{lemma}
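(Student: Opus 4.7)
Since $\Delta_g$ is an eigenoperator with $\Delta_g \tilde H_\mu = g[B_\mu(q,t)]\tilde H_\mu$, my plan is to expand both $e_n$ and $e_d$ in the modified Macdonald basis and match the two inner products. First I would apply the standard Garsia--Haiman identity
\begin{align*}
e_m = \sum_{\mu \vdash m}\frac{MB_\mu \Pi_\mu}{w_\mu}\,\tilde H_\mu,
\end{align*}
where $M = (1-q)(1-t)$, $\Pi_\mu = \prod_{c \ne (0,0)}(1-q^{a^{\prime}(c)}t^{\el^{\prime}(c)})$, and $w_\mu$ is the usual Macdonald norming constant. Then the left-hand side becomes $\sum_{\mu \vdash n}\frac{MB_\mu \Pi_\mu\,e_{d-1}[B_\mu]}{w_\mu}\langle \tilde H_\mu, f\rangle$, and the right-hand side becomes $\sum_{\nu \vdash d}\frac{MB_\nu \Pi_\nu\,(\omega f)[B_\nu]}{w_\nu}\langle \tilde H_\nu, s_d\rangle$.

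By linearity in $f$ I would reduce to the case $f = s_\lambda$ for $\lambda \vdash n$, so that $\langle \tilde H_\mu, s_\lambda\rangle = \tilde K_{\lambda,\mu}(q,t)$, $(\omega f)[B_\nu] = s_{\lambda^{\prime}}[B_\nu]$, and $\langle \tilde H_\nu, s_d\rangle = \tilde K_{(d),\nu}(q,t)$ is a one-row modified Kostka evaluation. The lemma then reduces to the plethystic reciprocity
\begin{align*}
\sum_{\mu \vdash n}\frac{MB_\mu \Pi_\mu\,e_{d-1}[B_\mu]\,\tilde K_{\lambda,\mu}(q,t)}{w_\mu} = \sum_{\nu \vdash d}\frac{MB_\nu \Pi_\nu\,s_{\lambda^{\prime}}[B_\nu]\,\tilde K_{(d),\nu}(q,t)}{w_\nu},
\end{align*}
exchanging a sum over partitions of $n$ with a sum over partitions of $d$. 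My plan to establish this is to recognize both sides as specializations of a single two-alphabet generating function built from the Macdonald Cauchy kernel $\sum_\mu \tilde H_\mu[X]\tilde H_\mu[Y]/w_\mu$, together with the expansion $e_k[B_\mu] = [z^k]\prod_{c \in \mu}(1+zq^{a^{\prime}(c)}t^{\el^{\prime}(c)})$ and Haiman's explicit plethystic evaluations of $\tilde H_\mu$ on hook alphabets.

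The main obstacle is that the two summations run over partitions of different sizes, so this is a genuine cross-degree identity rather than a coefficient match; aligning the weights $MB_\mu \Pi_\mu/w_\mu$ on the $n$-side with $MB_\nu \Pi_\nu/w_\nu$ on the $d$-side requires the full apparatus of plethystic manipulation. One concrete route is to multiply by $z^{d-1}$ and sum over $d \ge 1$, so that $e_{d-1}[B_\mu]$ packages into a plethystic exponential, after which Macdonald--Koornwinder type reciprocity for $\tilde H_\mu$ matches the two resulting kernels; the appearance of $\omega$ on the right is then explained by the plethystic identity relating $\omega g$ to the evaluation of $g$ on a sign-twisted alphabet.
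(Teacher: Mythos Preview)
The paper does not actually prove Lemma~\ref{lemma:reciprocity}: it is quoted verbatim as Corollary~2 of Haglund's Schr\"oder paper \cite{schroder} and used as a black box. So there is no ``paper's own proof'' to match against here.

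Your outline is in the right spirit---expand $e_n$ and $e_d$ in the $\tilde H_\mu$ basis and exploit that $\Delta_g$ acts diagonally---but it stops precisely at the step that carries all the content. After the Macdonald expansion and the simplification $\langle \tilde H_\nu, s_d\rangle = 1$ for all $\nu\vdash d$, the identity you must establish is
\[
\sum_{\mu\vdash n}\frac{MB_\mu\Pi_\mu\,e_{d-1}[B_\mu]}{w_\mu}\,\langle\tilde H_\mu,f\rangle
\;=\;
\sum_{\nu\vdash d}\frac{MB_\nu\Pi_\nu\,(\omega f)[B_\nu]}{w_\nu},
\]
and your proposal to ``sum over $d$, package $e_{d-1}[B_\mu]$ into a plethystic exponential, and invoke Macdonald--Koornwinder reciprocity'' is a plan, not a proof. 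You have not written down which reciprocity statement you mean, how the $\omega$ arises from it, or why the auxiliary weights $MB_\mu\Pi_\mu/w_\mu$ on the two sides line up. The actual argument in \cite{schroder} goes through the Garsia--Haiman--Tesler machinery (in particular the identity $\Pi_\mu\tilde H_\mu[MB_\nu+1]/w_\mu$ symmetric in $\mu,\nu$, or an equivalent form of Macdonald reciprocity) and is short once that tool is in hand; your sketch gestures toward the same circle of ideas but does not isolate the specific symmetric-in-$\mu,\nu$ kernel that makes the cross-degree swap work. As written this is an outline with a gap at the decisive step rather than a different-but-complete proof.
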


We set $d=2$ and $f = s_{\lambda}$ for $\lambda \vdash n$, since taking the scalar product of a symmetric function with $s_{\lambda}$ yields the coefficient of $s_{\lambda}$ in the Schur expansion of that symmetric function. Lemma \ref{lemma:reciprocity} implies that 
\begin{align}
\left\langle \Delta_{e_{1}} e_n, s_{\lambda} \right\rangle &= \left\langle \Delta_{s_{\lambda^{\prime}} } e_2, s_2 \right \rangle.
\end{align}
We can compute the right-hand side by hand. First, we expand $e_2$ into the modified Macdonald polynomial basis:
\begin{align}
e_2 &= \frac{1}{t-q} \tilde{H}_{1,1} - \frac{1}{t-q} \tilde{H}_{2}.
\end{align}
Then we apply the operator $\Delta_{s_{\lambda^{\prime}}}$.
\begin{align}
\Delta_{s_{\lambda}} e_2 &= \frac{s_{\lambda^{\prime}}[1+t]}{t-q} \tilde{H}_{1,1} - \frac{s_{\lambda^{\prime}}[1+q]}{t-q} \tilde{H}_{2} .
\end{align}
Now we expand this expression into the Schur basis and take the coefficient of $s_{2}$, yielding
\begin{align}
\left\langle \Delta_{s_{\lambda^{\prime}}} e_2, s_2 \right \rangle &= \frac{s_{\lambda^{\prime}}[1+t] - s_{\lambda^{\prime}}[1+q]}{t-q} .
\end{align}
It is already clear that the above expression is a polynomial in $q$ and $t$. Moreover, for any monomial $u$ the principal specialization $s_{\lambda^{\prime}}[1+u]$ is equal to the sum $\sum_{T} u^{\# \text { 2's in $T$}}$ over all semi-standard tableaux $T$ of shape $\lambda^{\prime}$ filled with $1$'s and $2$'s. This sum is zero if $\lambda^{\prime}$ has more than two rows, so we can restrict our attention to $\lambda^{\prime} = (n-m, m)$ for some integer $0 \leq m \leq \lfloor n/2 \rfloor$. For such a tableaux $T$ of shape $(n-m,m)$, it is clear that the first $m$ entries in the first row of $T$ must be 1's and all entries in the second row of $T$ must be 2's. Of the remaining $n-2m$ entries, we are free to choose an integer $0 \leq i \leq n-2m$ such that the left $i$ entries are 1's and the right $n-2m-i$ entries are 2's. Hence
\begin{align}
s_{n-m,m}[1+u] &= \sum_{p=m}^{n-m} u^p 
\end{align}
Since $(n-m,m)^{\prime} = (2^m, 1^{n-2m})$, we have
\begin{align}
\left\langle \Delta_{e_1} e_n, s_{2^m, 1^{n-2m}} \right\rangle &= \frac{\sum_{p=m}^{n-m} t^p - q^p}{t-q} = \sum_{p=m}^{n-m} \qtint{p}
\end{align}
which proves Proposition \ref{prop:k=1-sym}. 

In theory, this method can be used to compute $\Delta_{e_k} e_n$ for any fixed value of $k$. For example, $\left\langle \Delta_{e_2} e_n, s_{\lambda} \right\rangle$ equals
\begin{align}
\frac{(t-q^2)s_{\lambda^{\prime}}[1+t+t^2] - (q+t+1)(t-q) s_{\lambda^{\prime}}[1+q+t] + (t^2-q) s_{\lambda^{\prime}}[1+q+q^2]}{(t-q)(t^2-q)(t-q^2)} 
\end{align}
which is clearly a polynomial in $q$ and $t$. Unfortunately, it is not clear why the resulting expression should be a \emph{positive} polynomial in $q$ and $t$; furthermore, this problem only gets more difficult as $k$ grows. 

\subsection{The combinatorial side}
\label{ssec:k=1-comb}

In this subsection, we prove the following proposition, completing the proof of Theorem \ref{thm:k=1}.

\begin{prop}
\label{prop:k=1-comb}
For any positive integer $n$,
\begin{align}
\risepoly_{n,0}(x;q,t) + \risepoly_{n,1}(x;q,t) &= \sum_{m=0}^{\lfloor n/2 \rfloor} s_{2^m, 1^{n-2m}} \sum_{p=m}^{n-m} \qtint{p} .
\end{align}
\end{prop}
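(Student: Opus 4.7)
My plan is to evaluate the combinatorial left-hand side directly by classifying which labeled Dyck paths contribute. Using the decorated form (Proposition 3.1),
\[
\risepoly_{n,0}(x;q,t) + \risepoly_{n,1}(x;q,t) = \sum_{(P, R)} q^{\dinv(P)} t^{\area^{-}(P, R)} x^P,
\]
summed over $P \in \ldyck_n$ and $R \subseteq \rise(P)$ with $|R| \in \{n-2, n-1\}$. Because $|R| \leq |\rise(P)|$ and $|\rise(P)|$ equals $n$ minus the number of vertical runs in $D(P)$, only Dyck paths with at most two vertical runs contribute: the staircase $D_0 := N^n E^n$ (with $|\rise| = n-1$) and the two-run paths $D_{a,b} := N^a E^b N^{n-a} E^{n-b}$ for $1 \leq b \leq a \leq n-1$ (with $|\rise| = n-2$).

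For $D(P) = D_0$ the labels must satisfy $\lab_1 < \lab_2 < \cdots < \lab_n$, forcing $\dinv(P) = 0$, and a direct case analysis on whether $|R| = n-1$ or $|R| = n-2$ (in which case one rise $i \in \{2, \ldots, n\}$ is left undecorated, contributing $t^{\areai_i} = t^{i-1}$) yields the clean contribution $s_{1^n}\bigl(1 + t + \cdots + t^{n-1}\bigr)$. For $D(P) = D_{a,b}$, the decoration is forced to be $R = \rise(P)$, the undecorated rows are exactly $\{1, a+1\}$, and so $\area^{-}(P, R) = \areai_1 + \areai_{a+1} = a - b$. Thus the total contribution of two-run paths reduces to $\sum_{1 \leq b \leq a \leq n-1} t^{a-b}\, L_{a,b}(x;q)$, where
\[
L_{a,b}(x;q) := \sum_{\text{labelings } P \text{ of } D_{a,b}} q^{\dinv(P)} x^P
\]
is a two-column LLT polynomial for vertical strips of heights $a$ and $n-a$ at horizontal offset $b$. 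Since any partition of $n$ whose conjugate has at most two parts is of the form $(2^m, 1^{n-2m})$, the Schur support of $L_{a,b}$ is automatically $\{s_{(2^m, 1^{n-2m})} : 0 \le m \le \min(a, n-a)\}$, and the core computational step is to identify
\[
L_{a,b}(x;q) = \sum_m \gamma_m(a, b; q)\, s_{(2^m, 1^{n-2m})}
\]
in closed form, ideally as a power of $q$ times a $q$-binomial coefficient, using the standard LLT attacking-pair analysis between the two columns.

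Having the $\gamma_m(a, b; q)$ in hand, the final step is to verify
\[
[m = 0]\cdot(1 + t + \cdots + t^{n-1}) \;+\; \sum_{1 \leq b \leq a \leq n-1} t^{a-b}\,\gamma_m(a, b; q) \;=\; \sum_{p=m}^{n-m}[p]_{q,t}.
\]
I expect the main obstacle to be this last step: either extracting a sufficiently explicit formula for $\gamma_m(a, b; q)$ directly from the two-column LLT dinv-statistic, or cleanly collapsing the double sum over $(a, b)$. My best guess is that reindexing by $p = a$ (with the $t$-weight tracked via $a - b$) causes the inner sum over $b$ to telescope through a standard $q$-binomial identity, yielding exactly $\sum_{p=m}^{n-m}[p]_{q,t}$ after combining with the $D_0$ contribution. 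If the direct LLT manipulation becomes unwieldy, a fallback strategy is to build an explicit bijection between the decorated labeled Dyck paths contributing to the coefficient of $s_{(2^m, 1^{n-2m})}$ and pairs consisting of a standard Young tableau of that shape together with an integer $p \in \{m, m+1, \ldots, n-m\}$ equipped with a $(q,t)$-weight realizing $[p]_{q, t}$.
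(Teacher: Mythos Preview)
Your reduction is correct and lands on essentially the same core idea as the paper: both arguments reduce to two-column LLT polynomials and then extract Schur coefficients. The paper passes through the leaning-stack model (Subsection~3.2), writing $\risepoly_{n,1}(x;q,t)=\sum_{S,D} t^{\area(D)}\llt_{S,D}(x;q)$ with $S\in\stack_{n,1}$; your parametrization by Dyck paths $D_0$ and $D_{a,b}$ with the $t$-weight $t^{a-b}$ is the same sum pulled back through the bijection $\phi_{n,1}$.

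Where the two diverge is precisely at the step you flag as the obstacle. The paper does not compute each $\gamma_m(a,b;q)$ and then sum; instead it invokes the Carr\'e--Leclerc / van Leeuwen result (Lemma~6.1) that the coefficient of $s_\lambda$ in a two-column LLT polynomial is $\sum_P q^{\hinv(P)}$ over labelings with Yamanouchi reading word and content $\lambda$, and then encodes \emph{all} such Yamanouchi labelings (across all stacks $S$ and paths $D$ simultaneously) as ``$XY$ diagrams.'' These diagrams fall into two explicit shapes (Type~I and Type~II), from which one reads off directly that the coefficient of $t^{j}s_{2^m,1^{n-2m}}$ in $\risepoly_{n,1}$ is $\sum_{i=\max(0,m-j-1)}^{n-m-j-1} q^i$. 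Your hoped-for telescoping ``reindex by $p=a$ and sum over $b$'' is the same count in disguise, but the appearance of $\max(0,m-j-1)$ in the paper's formula signals that the sum is genuinely piecewise: the Type~II diagrams supply the low powers of $q$ only when $j\ge 1$ and $m\ge 1$, and they have no clean counterpart as a single ``$b$ telescopes'' identity. So your plan would work, but carrying it out would amount to rediscovering the $XY$-diagram classification in your $(a,b)$ coordinates; citing the Yamanouchi lemma and organizing the count across all $(a,b)$ at once, rather than one $L_{a,b}$ at a time, is what makes the computation close.
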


First, we note that $\risepoly_{n,0}(x;q,t) = s_{1^n}$, which accounts for the $m=p=0$ term above. We will need to work harder to expand $\risepoly_{n,1}(x;q,t)$. Recall the interpretation for $\risepoly_{n,k}(x;q,t)$ given in terms of labeled Dyck paths and leaning stacks in Subsection \ref{ssec:stacks}:
\begin{align}
\risepoly_{n,k}(x;q,t) &= \sum_{P \in \ldyck^{\stack}_{n,k}} q^{\hinv(P)} t^{\area(P)} x^P .
\end{align}
We note that this interpretation is closely related to the LLT polynomials of \cite{llt}. Namely, we can refine the sum on the right-hand side by fixing a leaning stack $S$ and then a Dyck path $D \in \dyck(S)$ and considering all ways of labeling the Dyck path $D$. Thus
\begin{align}
\risepoly_{n,k}(x;q,t) &= \sum_{S \in \stack_{n,k}} \sum_{D \in \dyck(S)} t^{\area(D)} \sum_{P \in \ldyck(S) :  \,D(P) = D} q^{\hinv(P)} x^P \\
\label{hag-symmetry}
&= \sum_{S \in \stack_{n,k}} \sum_{D \in \dyck(S)} t^{\area(D)} \llt_{S,D}(x;q) 
\end{align}
where we have defined
\begin{align}
\llt_{S,D}(x;q) &=  \sum_{P \in \ldyck(S) :  \, D(P) = D} q^{\hinv(P)} x^P.
\end{align}
We call this the \emph{LLT polynomial} with respect to $S$ and $D$, since these are special cases of the polynomials introduced in \cite{llt}. We can relate our versions of LLT polynomials more precisely to the notation for LLT polynomials appearing in \cite{hhl} as follows. Say that the north steps of $D$ appear in $d$ different columns and that the bottom row $i$ in the $j$th column (from right to left, beginning with $j=1$) has $\height_i = c_j$. Consider the tuple of skew diagrams $\nu = (\nu^{(1)}, \ldots, \nu^{(d)})$ where $\nu^{(j)}$ has number of squares equal to the number of north steps in the $j$th column of $D$ with the content of the bottom square equal to $c_j$. Then $\llt_{S,D}(x;q) = G_{\nu}(x;q)$, where the latter appears as Definition 3.2 of \cite{hhl}.

There are many benefits of this connection between $\risepoly_{n,k}(x;q,t)$ and LLT polynomials. The first is that LLT polynomials are known to be symmetric; this fact, along with \eqref{hag-symmetry}, implies that $\risepoly_{n,k}(x;q,t)$ is symmetric. On the other hand, we are still unable to prove that $\valpoly_{n,k}(x;q,t)$ is symmetric. More pertinent to our current case, when $D$ has two columns, much is known about the LLT polynomial $\llt_{S,D}(x;q)$. In the remainder of this subsection, we leverage this information to complete the proof of Theorem \ref{thm:k=1}. 

We use the notation that the \emph{reading word} of a labeled Dyck path $P \in \ldyck^{\stack}_{n,k}$, written $\word(P)$, is obtained by reading its labels from maximum $\height_i$ value down to $\height_i = 0$ from right to left. For example, the reading word of the $P \in \ldyck^{\stack}_{6,2}$ pictured in Figure \ref{fig:diagram} is $541643$.  We say that a word whose entries are positive integers is \emph{Yamanouchi} if each of its suffixes has more $i+1$'s than $i$'s for every positive integer $i$. 

\begin{lemma}[Carr\'e and Leclerc \cite{carre-leclerc}, van Leeuwen \cite{van-leeuwen}]
\label{lemma:yam}
For any $S \in \stack_{n,1}$ and $D \in \ldyck(S)$, the coefficient of $s_{\lambda}$ in the Schur expansion of  $\llt_{S,D}(x;q)$ is equal to the sum
\begin{align}
\sum_{P} q^{\hinv(P)}
\end{align}
over all $P \in \ldyck(S)$ with $D(P) = D$ such that $x^P = \prod_{i=1}^{\ell(\lambda)} x_i^{\lambda_i}$ and $\word(P)$ is Yamanouchi.
\end{lemma}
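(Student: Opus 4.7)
The plan is to deduce Lemma~\ref{lemma:yam} from the classical Schur-expansion theorem for two-row (equivalently, $2$-tuple) LLT polynomials due to Carré--Leclerc and van Leeuwen, after carefully matching conventions to the notation of this paper.

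First, I would identify $\llt_{S,D}(x;q)$ with a $2$-tuple LLT polynomial in the sense of \cite{hhl}. Because $S\in\stack_{n,1}$, the ambient Dyck paths end at $(2,n)$, so any $D\in\dyck(S)$ has north steps in at most two columns. Let $c_1,c_2$ denote the heights (values of $\height_i$) of the bottom rows of the right and left columns of $D$ as prescribed in the paragraph following \eqref{hag-symmetry}; then the recipe given there associates to $(S,D)$ a tuple $\nu=(\nu^{(1)},\nu^{(2)})$ of one-column skew diagrams whose bottom contents are $c_1,c_2$. Thus $\llt_{S,D}(x;q)=G_\nu(x;q)$, and the issue becomes: how does $G_\nu(x;q)$ expand into Schur functions when $\nu$ is a pair of one-column diagrams?

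Second, I would invoke the theorem of Carré--Leclerc \cite{carre-leclerc}, reformulated by van Leeuwen \cite{van-leeuwen}: for any tuple $\nu$ of at most two ribbon shapes, the Schur expansion of $G_\nu(x;q)$ is given by
\begin{align*}
G_\nu(x;q) \;=\; \sum_\lambda s_\lambda(x)\sum_{T} q^{\operatorname{coinv}(T)},
\end{align*}
where $T$ ranges over $2$-tuple semistandard tableaux of shape $\nu$ and content $\lambda$ whose reading word is Yamanouchi, and $\operatorname{coinv}(T)$ is the standard coinversion (co-spin) statistic on $2$-tuple tableaux. Pulling this back through the bijection of the first step turns $T$ into a labeled Dyck path $P\in\ldyck(S)$ with $D(P)=D$ and $x^P=\prod_i x_i^{\lambda_i}$, so the combinatorial content of the lemma is immediate modulo two convention checks.

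The main obstacle, and the bulk of the work, is precisely to verify those two compatibilities. The first is that the reading word of $P$ defined in the paper (labels read from the maximum value of $\height_i$ downward, right-to-left within each height level) coincides with the standard content-decreasing, right-to-left reading word on the two skew diagrams $\nu^{(1)},\nu^{(2)}$; this follows because the height $\height_i$ is, by construction, the content of the cell occupied by the label $\lab_i(P)$ in the associated ribbon tuple, and the ``right-to-left within each height'' tiebreak matches the convention of reading $\nu^{(1)}$ before $\nu^{(2)}$ among cells of equal content. The second is that $\hinv(P)$ equals $\operatorname{coinv}(T)$: the primary inversions counted by $\hinv$ (pairs with $\height_i=\height_j$ and $\lab_i<\lab_j$) correspond exactly to equal-content attacking pairs that are \emph{not} column-strict inversions, and the secondary inversions ($\height_i=\height_j+1$, $\lab_i>\lab_j$) correspond to the adjacent-content attacking pairs that are not inversions, which is the standard $\hhl$ definition. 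Once these identifications are laid out, the lemma follows.

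Finally, I would note that an entirely self-contained proof is also possible by exhibiting a Bender--Knuth-type involution on non-Yamanouchi labelings that preserves both $\hinv$ and the underlying Dyck path; this can serve as a sanity check but essentially reproduces the Carré--Leclerc crystal argument, so I would prefer the reduction strategy above.
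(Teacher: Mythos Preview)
The paper does not actually prove Lemma~\ref{lemma:yam}; it is stated as a result of Carr\'e--Leclerc and van Leeuwen and then used directly. The only thing the paper does in this direction is the paragraph just before the lemma, where it identifies $\llt_{S,D}(x;q)$ with the HHL polynomial $G_\nu(x;q)$ for a specific pair $\nu=(\nu^{(1)},\nu^{(2)})$ of one-column skew shapes with prescribed contents. Your proposal is precisely the unpacking of that citation: match $(S,D)$ with the $2$-tuple $\nu$, invoke the Yamanouchi Schur-expansion theorem for $2$-ribbon/domino LLT polynomials, and verify that the reading word and the $q$-statistic agree under this identification. So your approach is correct and is exactly what the paper leaves implicit.

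One small caution on terminology: the paper asserts $\llt_{S,D}=G_\nu$ with $G_\nu$ as in \cite{hhl}, where the exponent of $q$ is an \emph{inversion} statistic, not a coinversion. Your description (``attacking pairs that are not column-strict inversions'') is phrased in the coinversion language; whether this literally matches $\hinv$ depends on which cell you declare ``earlier'' in an attacking pair, and different sources differ. When you write this up, it is worth tracing one explicit example (say the $P$ in Figure~\ref{fig:diagram}) through both the paper's $\hinv$ and the $G_\nu$ statistic to pin down the sign of the convention, rather than relying on the name ``coinv.'' The substance of the argument is unaffected.
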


For any such $P$, each integer can be used as a label at most twice. Thus the only Schur functions appearing in the expansion of $\llt_{S,D}(x;q)$ are of the form $s_{2^m, 1^{n-2m}}$ for some integer $0 \leq m \leq \lfloor n/2 \rfloor$. Furthermore, we can uniquely represent a labeled Dyck path $P$ that satisfies the conditions in Lemma \ref{lemma:yam} by filling a certain two-column array with $X$'s and $Y$'s according to the following procedure. For each height that occurs in $P$ from 0 up to the maximum height, consider the two columns of $P$. If the left column of $P$ contains a label at that height, place a square into the left column of the array. If we have already come across the value of the label while creating our array, we place a $Y$ in the new square; otherwise, we place an $X$. Then we do the same for the right column. We continue until all heights have been processed. We call this the \emph{XY diagram} of $P$.  

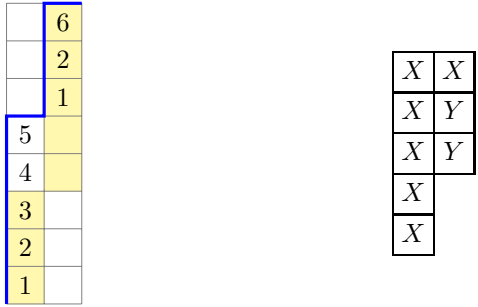
\begin{figure}
\begin{minipage}{0.4\linewidth}
\centering
    \begin{tikzpicture}
\filldraw[yellow!40!white] (0,0) rectangle (0.5,1.5);
\filldraw[yellow!40!white] (0.5,1.5) rectangle (1,4);
\draw[step=0.5cm, gray, very thin] (-0.001,0) grid (1,4);
\draw[blue, very thick] (0,0) -- (0,2.5) -- (0.5,2.5) -- (0.5,4) -- (1,4);
\node at (0.25,0.25) {1};
\node at (0.25,0.75) {2};
\node at (0.25,1.25) {3};
\node at (0.25,1.75) {4};
\node at (0.25,2.25) {5};
\node at (0.75,2.75) {1};
\node at (0.75,3.25) {2};
\node at (0.75,3.75) {6};
\end{tikzpicture}
\end{minipage}
\begin{minipage}{0.4\linewidth}
\centering
\begin{ytableau}
X & X \\
X & Y \\
X & Y \\
X & \none \\
X & \none 
\end{ytableau}
\end{minipage}

\caption{To the left we have drawn a two-column labeled Dyck path whose word is Yamanouchi with its leaning stack shaded yellow. To the right we have drawn the corresponding $XY$ diagram. }
\label{fig:diagram}
\end{figure}

Since each label appears at most twice in $P$, this process is well-defined. Furthermore, it is invertible; to obtain the original labeled Dyck path $P$, we scan the $XY$ diagram from bottom to top and left to right. For each $X$ or $Y$, we place a label in the corresponding column at the corresponding height that counts the number of times (including the current letter) that we have observed the current letter so far. 

It is clear that all $XY$ diagrams have two columns, that the left column may extend below the right column (but not vice versa), and that the lower left square of a diagram always contains an $X$. The crux of the proof of Proposition \ref{prop:k=1-comb} is that we can use the Yamanouchi restriction on $\word(P)$ to completely classify the possible $XY$ diagrams. We note that $\word(P)$ is Yamanouchi if and only if, reading the diagram from bottom to top and left to right, we have always seen at least as many $X$'s as $Y$'s. Furthermore, the labels of $P$ are increasing up columns if and only if there are no $Y$'s on top of $X$'s. Together with the Yamanouchi condition, this implies that $Y$'s always occur in the right column.

These conditions are enough to allow us to classify the possible $XY$ diagrams. From bottom to top, every diagram begins with $a \geq 0$ rows consisting of only a left square which contains an $X$. Then it has $b \geq 0$ rows which have two squares where the left square contains an $X$ and the right square contains a $Y$. From this point on, the diagram can have one of two types. Type I $XY$ diagrams have a sequence of $c \geq 0$ rows with two squares, both of which contain $X$'s, followed by a sequence of $d \geq 0$ rows with a single square containing an $X$. The final $d$ rows must either consist entirely of left squares or of right squares. In Type II $XY$ diagrams, the $b$ $XY$ rows are followed by $c^{\prime}$ rows with only a right square containing a $Y$. (For Type II diagrams, we must have $b \geq 1$.) Here, $c^{\prime}$ is an integer satisfying $1 \leq c^{\prime} \leq a$. Finally, a Type II diagram has $d^{\prime} \geq 0$ rows with only an $X$ in the right square.

\begin{figure}
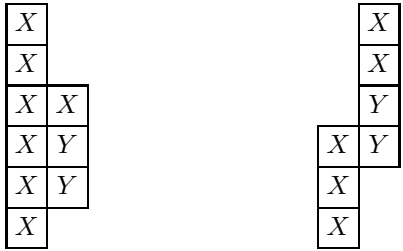

\begin{ytableau}
X & \none \\
X & \none \\
X & X \\
X & Y \\
X & Y \\
X & \none 
\end{ytableau}
\hspace{80pt}
\begin{ytableau}
\none & X \\
\none & X \\
\none & Y \\
X & Y \\
X & \none \\
X & \none 
\end{ytableau}
\caption{A Type I diagram on the left and a Type II diagram on the right. }
\label{fig:diagram-types}
\end{figure}

We would like to recover the area and $\hinv$ of the original labeled Dyck path $P$ from its $XY$ diagram. It is not hard to see that $\area(P)$ is equal to $a$, the number of rows at the bottom of the diagram containing only an $X$. The $\hinv$ of a diagram is equal to the number of pairs of left and right squares such that either
\begin{itemize} 
\item the left square appears immediately northwest of the right square, or
\item the two squares are in the same row and both contain $X$'s.
\end{itemize}

Now we can use the characterization given above to find the coefficient of \\ $s_{2^m, 1^{n-2m}}$ in $\risepoly_{n,1}(x;q,t)$ for any $0 \leq m \leq \lfloor n/2 \rfloor$. Since there are always at least as many $X$'s as $Y$'s in a diagram, we restrict our attention to diagrams with $m$ $Y$'s and $n-m$ $X$'s. Clearly the area of such a diagram may range between 0 and $n-m-1$, which corroborates the formula in Proposition \ref{prop:k=1-comb}. More precisely, we fix the area to be some value $0 \leq j \leq n-m-1$. If we can show that there is exactly one diagram with area $j$, $n-m$ $X$'s, and $m$ Y's with $\hinv = i$ for each $\max(0,m-j-1) \leq i \leq n-m-j-1$, then we have completed the proof of Proposition \ref{prop:k=1-comb}. 

First, we consider the possible Type I diagrams. We know that such a diagram must begin with $j$ rows consisting only of $X$'s in the left square followed by $m$ rows consisting of an $X$ in the left square and a $Y$ in the right square. Temporarily assuming $m \geq 1$, we have already accumulated $m-1$ $\hinv$.  We must place $n-2m-j$ more $X$'s. There are exactly $n-2m-j+1$ ways to accomplish this task. Namely, we choose any integer $0 \leq r \leq n-2m-j$. We repeatedly place an $X$ in the left square, then the right square, then the next left square up, and so on, placing $r$ $X$'s this way. After this, we stack the remaining $X$'s above the last of the $r$ $X$'s we had just placed. (If $r=0$, we place every $X$ in a stack above the highest $Y$.) We have created every Type I diagram with area $j$, $n-m$ $X$'s and $m$ $Y$'s. Furthermore, the resulting diagram has $\hinv = m-1+r$, so have contributed
\begin{align}
\label{contrib1}
\sum_{i=m-1}^{n-m-j-1} q^i 
\end{align}
to the coefficient of $t^j s_{2^m, 1^{n-2m}}$. If $m = 0$, the same logic shows that we have contributed
\begin{align}
\label{contrib2}
\sum_{i=0}^{n-j-1} q^i 
\end{align}
to the coefficient of $t^j s_{1^n}$.

Now we consider the Type II diagrams with area $j$, $n-m$ $X$'s, and $m$ $Y$'s. Such a diagram must begin with $j$ rows of just an $X$ in the left square, followed by $1 \leq b \leq m-1$ rows of an $X$ and a $Y$, contributing $b-1$ $\hinv$. Then the rest of the diagram is determined, as it must have $m-b$ rows that just have a $Y$ on the right followed by $n-m-j-b$ rows consisting of an $X$ on the right. Recall from the characterization of Type II diagrams that we must have $1 \leq m-b \leq j$, so actually $\max(1, m-j) \leq b \leq m-1$. This yields a contribution of 
\begin{align}
\label{contrib3}
\sum_{i=\max(0, m-j-1)}^{m-2} q^i
\end{align}
to the coefficient of $t^j s_{2^m, 1^{n-2m}}$. Gathering \eqref{contrib1}, \eqref{contrib2}, and \eqref{contrib3}, the coefficient of $t^j s_{2^m, 1^{n-2m}}$ in $\rise_{n,1}(x;q,t)$ is
\begin{align}
\sum_{i=\max(0,m-j-1)}^{n-m-j-1} q^i .
\end{align}
This concludes the proof of Proposition \ref{prop:k=1-comb}. 

\section{Extensions}
\label{sec:extensions}

\subsection{4-Variable Catalan polynomials}
\label{ssec:cat}

By the theory of shuffles, as described in Chapter 6 of \cite{haglund-book}, the Delta Conjecture implies the following conjecture.
\begin{align}
\left\langle \Delta^{\prime}_{e_k} e_n, e_n \right\rangle &=  \left. \sum_{D \in \dyck_{n}} q^{\dinv(D)} t^{\area(D)} \prod_{\areai_i(D) > \areai_{i-1}(D)} \left( 1 + z / t^{\areai_i(D)} \right)  \right|_{z^{n-k-1}}  \\
&= \left. \sum_{D \in \dyck_{n}} q^{\dinv(D)} t^{\area(D)} \prod_{i \in \val(D)} \left(1 + z / q^{\dinvi_{i}(D) + 1} \right)  \right|_{z^{n-k-1}} .
\end{align}
This conjecture is a generalization of the $q,t$-Catalan theorem proved by Garsia and the first author \cite{qtcatalan}.

Given the combinatorial interpretations in the Delta Conjecture, it is natural to wonder if we can combine them in a way that includes both products. Unfortunately, the polynomial
\begin{align}
\sum_{P \in \ldyck_{n}} q^{\dinv(P)} t^{\area(P)} \prod_{i \in \val(P)} \left( 1 + z / q^{\dinvi_i(P) + 1} \right) \prod_{\areai_i(P) > \areai_{i-1}(P)} \left( 1 + w / t^{\areai_i(P)} \right) x^P 
\end{align}
is not symmetric. However, we do seem to obtain an interesting polynomial in the Catalan case. We set
\begin{align}
\label{cat-def}
\cat_{n}(q,t,z,w) =& \sum_{D \in \dyck_{n}} q^{\dinv(D)} t^{\area(D)} \prod_{i \in \val(D)} \left( 1 + z / q^{\dinvi_i(D) + 1} \right) \\
\nonumber
&\times \prod_{\areai_i(D) > \areai_{i-1}(D)} \left( 1 + w / t^{\areai_i(D)} \right) .
\end{align}
As in the Delta Conjecture, we have a second (conjecturally equivalent) combinatorially defined polynomial. Given a labeled Dyck path $P$, let $h$ be the maximum area of any row in $P$. The \emph{reading order} processes the labels with area $h$ from right to left, then the labels with area $h-1$ from right to left, and so on until it has processed all labels.
We set $\bi_i(D)$ to be the number of diagonal inversions between the $i$th label in reading order and labels that precede it in reading order. We define
\begin{align}
\label{catmod-def}
\catmod_{n}(q,t,z,w) =& \sum_{D \in \dyck_{n}} q^{\dinv(D)} t^{\area(D)} \prod_{\bi_i(D) > \bi_{i-1}(D)} \left( 1 + z / q^{\bi_i(D)} \right) \\
\nonumber
&\times \prod_{\areai_i(D) > \areai_{i-1}(D)} \left( 1 + w / t^{\areai_i(D)} \right)  .
\end{align}
We note that $\cat_{n}(1,1,0,0) = \catmod_{n}(1,1,0,0) = \frac{1}{n+1} \binom{2n}{n}$, the usual Catalan number. We also have $\cat_n(1,1,1,1) = \catmod_n(1,1,1,1) = \frac{2^{n-1}}{n+1} \binom{2n}{n}$.  

Furthermore, we observe that $\bi_i(D) > \bi_{i-1}(D)$ if and only if the row $i$ contains a ``peak,'' a north step followed immediately by an east step, that is not the first peak in reading order. The number of these peaks for any Dyck path is equal to the number of valleys of the path. The ``zeta map'' or ``sweep map'' is a bijection $\dyck_n \to \dyck_n$ that interchanges rises and valleys and sends the joint distribution of $(\dinv, \area)$ to $(\area, \operatorname{bounce})$  \cite{haglund-book}. Hence, an application of the zeta map proves the following symmetries:
\begin{align}
\cat_n(1,1,z,w) &= \cat_n(1,1,w,z) = \catmod_n(1,1,z,w) = \catmod_n(1,1,w,z) \\
\cat_n(q,1,0,w) &= \cat_n(1,q,w,0)=\catmod_n(q,1,0,w) = \catmod_n(1,q,w,0).
\end{align}
We conjecture that the polynomials $\cat_n(q,t,z,w)$ and $\catmod_n(q,t,z,w)$ are equal and that they are connected to the delta operators. Since the first draft of this paper appeared, Mike Zabrocki proved all conjectures related to $\catmod_n(q,t,z,w)$; we describe Zabrocki's result in more detail at the end of this subsection.

\begin{conj}[4-Variable Catalan Conjecture]
\label{conj:cat}
\begin{align}
\label{cat1}
\left. \cat_{n}(q,t,z,w) \right|_{z^k w^{\el}} &= \left. \catmod_{n}(q,t,z,w) \right|_{z^k w^{\el}} \\
\label{cat2}
	&= \left\langle \Delta_{h_k } \nabla e_{n-k}, s_{\el+1,1^{n-k-\el-1}} \right\rangle  \\
\label{cat3}
	&= \left\langle \Delta_{h_k } \Delta^{\prime}_{e_{n-k-\el-1}} e_{n-k}, e_{n-k} \right\rangle .
\end{align}
Furthermore, each of these expressions is $k,\el$-symmetric.
\end{conj}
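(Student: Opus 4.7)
The conjecture splits into four assertions: the combinatorial identity $\cat_n = \catmod_n$ coefficient by coefficient in $z,w$; its agreement with the symmetric-function pairing $\langle \Delta_{h_k}\nabla e_{n-k}, s_{\ell+1,1^{n-k-\ell-1}}\rangle$; the equivalence of that pairing with $\langle \Delta_{h_k}\Delta'_{e_{n-k-\ell-1}} e_{n-k}, e_{n-k}\rangle$; and a hidden $k \leftrightarrow \ell$ symmetry. My plan is to attack these in order, turning the problem progressively from combinatorics into symmetric-function manipulations.

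First I would prove $\cat_n(q,t,z,w) = \catmod_n(q,t,z,w)$ by refining the zeta (sweep) map. The classical map sends $(\dinv,\area)$ to $(\area,\mathrm{bounce})$ and, as the text notes, exchanges contractible valleys with the reading-order ``peaks'' that drive the $\catmod$ product. Both $\cat_n$ and $\catmod_n$ already weight the $w$-factor by the same double-rise/area data, so I would first verify by induction on the number of valleys, using a local move on a single valley as in the sweep-map proofs of \cite{haglund-book}, that a suitable refinement of zeta preserves the $\areai_i$ values at all rises simultaneously with the valley/peak correspondence. The check at $w = 0$ reduces to the known valley-version Catalan identity, and the check at $z = 0$ is tautological, so the induction has nontrivial initial data.

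Second, I would identify the coefficient of $z^k w^\ell$ in $\cat_n$ with the scalar product $\langle \Delta_{h_k}\nabla e_{n-k}, s_{\ell+1,1^{n-k-\ell-1}}\rangle$ by specializing the Rise and Valley versions of the Delta Conjecture to the $e_n$-coefficient. Taking $\langle\,\cdot\,,e_n\rangle$ collapses the labeled sum to the unlabeled Dyck-path sum (this is the standard shuffle manipulation reviewed in Chapter~6 of \cite{haglund-book}), and combining the two specializations produces the bivariate polynomial in $z,w$ on one side, and on the other side the operator $\Delta_{h_k}$ acting on $\nabla e_{n-k}$. The equivalence (c) between the hook-Schur pairing and the $e_{n-k}$-pairing then follows from Haglund's reciprocity, Lemma~\ref{lemma:reciprocity}: applying it with $d = n-k$ and $f = s_{\ell+1, 1^{n-k-\ell-1}}$ converts the pairing of $\nabla e_{n-k}$ against a hook Schur into the pairing of $\Delta'_{e_{n-k-\ell-1}} e_{n-k}$ against $e_{n-k}$, while the outer $\Delta_{h_k}$ commutes through because it is self-adjoint on the modified Macdonald basis with respect to the relevant scalar product.

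Third, the $k \leftrightarrow \ell$ symmetry is the deepest claim and the principal obstacle. I would expand the pairing on the modified Macdonald basis, obtaining a sum over $\mu \vdash n-k$ of $h_k[B_\mu]\, e_{n-k-\ell-1}[B_\mu - 1]$ times an $e_{n-k}$-Macdonald kernel, and seek a plethystic identity rewriting $h_k[B_\mu]\, e_{n-k-\ell-1}[B_\mu - 1]$ in a form manifestly symmetric under swapping the two indices; a second application of reciprocity or the Koornwinder--Macdonald duality for $\Delta$-operators is the natural tool. The main obstacle overall is this third step together with the fine control required of the zeta map in step one; step two reduces, once the shuffle machinery is applied, to bookkeeping, and the reciprocity move in step three (the passage from hook Schurs to $e_{n-k}$) is formal once the self-adjointness of $\Delta_{h_k}$ is invoked.
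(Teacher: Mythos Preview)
This statement is a \emph{conjecture}; the paper only proves one piece of it, namely the equality of \eqref{cat2} and \eqref{cat3}, via Proposition~\ref{prop:cat-sym}. The equality of $\catmod_n$ with \eqref{cat2}--\eqref{cat3} is attributed to Zabrocki, while $\cat_n=\catmod_n$ and the full $k,\el$-symmetry are left open. So your steps one, two, and four are attempts to prove open problems; in particular, step two invokes the Delta Conjecture itself, which is the main open conjecture of the paper, and step one proposes a refinement of the zeta map that is not known to exist.

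For the one part the paper does prove, \eqref{cat2}=\eqref{cat3}, your route via Lemma~\ref{lemma:reciprocity} and a self-adjointness claim has a genuine error: $\Delta_{h_k}$ is \emph{not} self-adjoint for the Hall scalar product, because the modified Macdonald polynomials $\tilde{H}_\mu$ are not orthogonal for that product. Lemma~\ref{lemma:reciprocity} also does not apply directly: it concerns $\langle \Delta_{e_{d-1}} e_n, f\rangle$, not $\langle \Gamma\,\nabla e_m, f\rangle$ with an outer eigenoperator $\Gamma$. The paper instead proceeds by expanding on the Macdonald basis and using three scalar-product identities: $\langle \tilde{H}_\mu, s_{k+1,1^{m-k-1}}\rangle = e_{m-k-1}[B_\mu - 1]$, $\langle \tilde{H}_\mu, s_m\rangle = 1$, and $\langle \tilde{H}_\mu, e_m\rangle = T_\mu$. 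The first lets one rewrite the hook-Schur pairing as the $s_m$-pairing with an extra $\Delta'_{e_{m-k-1}}$ inserted; the latter two give $\langle \nabla g, s_m\rangle = \langle g, e_m\rangle$ for any $g$ of degree $m$, which removes the $\nabla$ and replaces $s_m$ by $e_m$. No adjointness is needed, and the argument works uniformly for any eigenoperator $\Gamma$ in place of $\Delta_{h_k}$.
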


Later in this subsection we will show that \eqref{cat2} and \eqref{cat3} are equal.
We note that the equality of the right-hand sides of \eqref{cat1} and \eqref{cat2} would follow from the Rise Version of the Delta Conjecture. This is because taking the $z$ term in the product for $\catmod_n(q,t,z,w)$ corresponds to turning peaks into diagonal steps and using the $\dinv$ statistic for Schr\"oder paths \cite{schroder-conjecture, schroder}. It is also known that taking the inner product with a Schur function of hook shape yields the same Schr\"oder paths \cite{schroder-conjecture, schroder}. 

In fact, we can refine Conjecture \ref{conj:cat} based on how often the Dyck path returns to the diagonal. We define
\begin{align}
\cat_{n, r}(q, t, z, w) =&  \sum_{\substack{D \in \dyck_{n} \\ \areai_i(D) = 0 \text { $r$ times}}} q^{\dinv(D)} t^{\area(D)} \prod_{i \in \val(D)} \left( 1 + z / q^{\dinvi_i(D) + 1} \right) \\
\nonumber
&\times \prod_{\areai_i(D) > \areai_{i-1}(D)} \left( 1 + w / t^{\areai_i(D)} \right) \\
\catmod_{n, r}(q, t, z, w) =& \sum_{\substack{D \in \dyck_{n} \\ \areai_i(D) = 0 \text { $r$ times}}} q^{\dinv(D)} t^{\area(D)} \prod_{\bi_i(D) > \bi_{i-1}(D)} \left( 1 + z / q^{\bi_i(D)} \right) \\
\nonumber
&\times \prod_{\areai_i(D) > \areai_{i-1}(D)} \left( 1 + w / t^{\areai_i(D)} \right)  .
\end{align}

\begin{conj}[Touchpoint 4-Variable Catalan Conjecture]
\label{conj:touchpoint}
For integers $n \geq k$, $\el$, $r \geq 0$, we have
\begin{align}
\label{touchpoint1}
\left. \cat_{n,r}(q,t,z,w) \right|_{z^k w^{\el}} &= \left. \catmod_{n,r}(q,t,z,w) \right|_{z^k w^{\el}} \\
\label{touchpoint2}
&= \left\langle \Delta_{h_{\el}} \nabla E_{n-\el, r}, s_{k+1, 1^{n-k-\el-1}} \right\rangle \\
\label{touchpoint3}
&= \left\langle \Delta_{h_\el} \Delta^{\prime}_{e_{n-k-\el-1}} E_{n-\el, r}, e_{n-\el} \right\rangle .
\end{align}
where the polynomials $E_{n, r}$ are defined in \cite{schroder}.
\end{conj}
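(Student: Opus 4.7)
My plan is to prove the Touchpoint 4-Variable Catalan Conjecture by splitting it into three assertions: the combinatorial identity \eqref{touchpoint1}, the symmetric function identity \eqref{touchpoint2} $=$ \eqref{touchpoint3}, and the bridge \eqref{touchpoint1} $=$ \eqref{touchpoint2}. I would address them in that order of increasing difficulty, using the arguments that establish Conjecture \ref{conj:cat} as a template but tracking the touchpoint statistic throughout.

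For the symmetric function identity \eqref{touchpoint2} $=$ \eqref{touchpoint3}, the strategy is to invoke a touchpoint-refined Schr\"oder-path identity. The unrefined statement, that $\langle \nabla e_n, s_{k+1, 1^{n-k-1}} \rangle$ and $\langle \Delta^{\prime}_{e_{n-k-1}} e_n, e_n \rangle$ both equal the $(\dinv, \area)$-generating function of Schr\"oder paths with $k$ diagonals, follows from Haglund's analysis in \cite{schroder}. Because $e_n = \sum_r E_{n,r}$ and the Schr\"oder-path enumeration refines by touchpoint compatibly with this decomposition, one obtains the refined identity $\langle \nabla E_{n-\el,r}, s_{k+1,1^{n-k-\el-1}} \rangle = \langle \Delta^{\prime}_{e_{n-k-\el-1}} E_{n-\el,r}, e_{n-\el} \rangle$. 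Applying $\Delta_{h_\el}$ to both sides, using that it is an eigenoperator on the $\tilde H_\mu$-basis so it commutes with the Hall inner product in the appropriate sense, then yields \eqref{touchpoint2} $=$ \eqref{touchpoint3}.

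For the combinatorial identity \eqref{touchpoint1}, I would argue path by path. Since the $w$-product and the weight $q^{\dinv(D)} t^{\area(D)}$ are identical on both sides and the touchpoint restriction is common, it suffices to show, for each $D \in \dyck_n$ with $r$ touchpoints,
\begin{align}
\prod_{i \in \val(D)} \left(1 + z/q^{\dinvi_i(D)+1}\right) = \prod_{i \, : \, \bi_i(D) > \bi_{i-1}(D)} \left(1 + z/q^{\bi_i(D)}\right).
\end{align}
Both products have degree in $z$ equal to the number of valleys of $D$, so the work is to match the two multisets of $q$-powers appearing as exponents. A promising route is induction on $n$, isolating the contribution of a single outer touchpoint-to-touchpoint excursion and peeling it off on both sides; the touchpoint count $r$ is preserved automatically since it does not enter either $z$-decoration.

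The main obstacle is the bridge \eqref{touchpoint1} $=$ \eqref{touchpoint2}, which is a touchpoint-refined, Catalan-case specialization of the Delta Conjecture itself. My plan is to adapt Zabrocki's recent resolution of the unrefined $\catmod_n$ conjecture (mentioned at the end of the subsection) to the touchpoint setting. Specifically, I would apply Zabrocki's plethystic rewriting of $\Delta_{h_\el} \nabla e_{n-\el}$, substitute $e_{n-\el} = \sum_r E_{n-\el, r}$, and verify that the resulting decomposition aligns term-by-term with the filtration of Dyck paths by touchpoint count. The hard part will be establishing this compatibility: neither a touchpoint refinement of the Delta Conjecture nor the explicit action of $\Delta_{h_\el}$ on the polynomials $E_{n,r}$ is currently available in a form that makes the matching automatic, so new symmetric-function identities for $\Delta_{h_\el} E_{n,r}$ may have to be developed, perhaps via the Carlsson--Mellit Dyck path algebra adapted to the touchpoint filtration.
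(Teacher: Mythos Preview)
The first thing to note is that Conjecture~\ref{conj:touchpoint} is a \emph{conjecture}: the paper does not claim a proof of it.  What the paper does establish is only the equality of the two symmetric-function expressions \eqref{touchpoint2} and \eqref{touchpoint3}; this is Proposition~\ref{prop:cat-sym}.  In addition the paper records that Zabrocki's proof of the Compositional 4-Variable Catalan Conjecture yields the equality of the $\catmod_{n,r}$ side of \eqref{touchpoint1} with \eqref{touchpoint2} and \eqref{touchpoint3}.  The remaining equality in \eqref{touchpoint1}, namely $\cat_{n,r}=\catmod_{n,r}$, is explicitly listed as open.  So any proposal that purports to prove the full conjecture is going well beyond the paper.

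On \eqref{touchpoint2}$=$\eqref{touchpoint3}: your route through Schr\"oder paths is unnecessarily indirect, and the step ``apply $\Delta_{h_\el}$ to both sides \ldots\ it commutes with the Hall inner product'' is not well-formed.  The paper's argument is a two-line Macdonald-basis computation: for $\mu\vdash m$ one has $\langle \tilde H_\mu,\,s_{k+1,1^{m-k-1}}\rangle=e_{m-k-1}[B_\mu-1]$ and $\langle \tilde H_\mu,\,e_m\rangle=T_\mu$, while $\langle \tilde H_\mu,\,s_m\rangle=1$.  Expanding any $f\in\Lambda^{(m)}$ in the $\tilde H$-basis and using these three facts immediately gives $\langle \Gamma\nabla f,\,s_{k+1,1^{m-k-1}}\rangle=\langle \Gamma\Delta'_{e_{m-k-1}}f,\,e_m\rangle$ for every Macdonald eigenoperator $\Gamma$; no combinatorial input is required.

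On \eqref{touchpoint1}: your path-by-path reduction is not valid.  The two $z$-products are \emph{not} equal for each Dyck path separately.  For the path with area sequence $(0,1,0,1,1)$ one has $\val(D)=\{3,5\}$ with $\dinvi_3(D)=\dinvi_5(D)=0$, so the $\cat$ side contributes $(1+z/q)^2$; on the $\catmod$ side the reading-order $b$-sequence is $0,1,2,1,1$, giving rises at positions $2$ and $3$ and hence the product $(1+z/q)(1+z/q^2)$.  These disagree already at the $z^1$ coefficient.  Thus the identity, if true, is a genuine cancellation phenomenon across Dyck paths, and the paper is right to leave it open.

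Your plan for the bridge is in line with what the paper reports: Zabrocki's compositional result does supply the $\catmod_{n,r}$ side of the conjecture, essentially because summing $\cpoly_\alpha$ over compositions of a fixed length $r$ recovers $E_{n-\el,r}$.  But note this does not touch the $\cat_{n,r}$ side.
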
 

Finally, we can refine part of Conjecture \ref{conj:touchpoint} based on exactly where the rows with $\areai_i = 0$ occur. Given a Dyck path $D$ in the sum for $\catmod_{n, m}(q, t, z, w)$, we know there are $r$ rows $i_1, i_2, \ldots, i_r$ such that $\areai_{h}(D) = 0$ if and only if $h$ is equal to some $i_j$. We decorate $D$ by placing a star next to (the second north step of) each double rise that corresponds to a power of $w$ that we select from the product
\begin{align}
\prod_{\areai_i(D) > \areai_{i-1}(D)} \left( 1 + w / t^{\areai_i(D)} \right) .
\end{align}
Then we form a composition $\alpha$ of $n - \el$ by setting
\begin{align}
\alpha_j = i_{j+1} - i_{j} - \text{\# of stars between rows $i_{j}$ and $i_{j+1}$} .
\end{align}
where $i_{r+1}$ is set equal to $n$. Set $\catmod_{n, \alpha}(q,t,z)$ to be the sum of the form of \eqref{catmod-def} over all Dyck paths decorated in this manner associated to composition $\alpha$. (We remove the variable $w$ from the notation because its power must equal $n - |\alpha|$.) The following conjecture is a refinement of Conjecture \ref{conj:touchpoint}, a fact which follows from work in \cite{compositional-shuffle}.

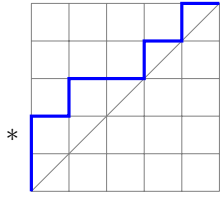
\begin{figure}
\centering
\begin{tikzpicture}
\draw[step=0.5cm, gray, very thin] (0.999,0) grid (3.5,2.5);
\draw[ gray, very thin] (1,0) -- (3.5,2.5);
\draw[blue, very thick] (1,0) -- (1,1) -- (1.5,1) -- (1.5,1.5) -- (2.5,1.5) -- (2.5, 2) -- (3, 2) -- (3,2.5) -- (3.5,2.5);
\node at (0.75, 0.75) {$\ast$};
\end{tikzpicture}
\caption{A decorated Dyck path with touch composition $\alpha = (2,1,1)$.}
\label{fig:decorated-dyck}
\end{figure}

\begin{conj}[Compositional 4-Variable Catalan Conjecture]
\label{conj:comp-cat}
For integers $n > k, \el \geq 0$ and a composition $\alpha \vDash n-\el$, we have
\begin{align}
\left. \catmod_{n, \alpha}(q,t,z) \right|_{z^k}
&= \left\langle \Delta_{h_{\el}} \nabla \cpoly_{\alpha}, s_{k+1, 1^{n-k-\el-1}} \right\rangle \\
&= \left\langle \Delta_{h_{\el}} \Delta^{\prime}_{e_{n-k-\el-1}} \cpoly_{\alpha}, e_{n-\el} \right\rangle .
\end{align}
where $\cpoly_{\alpha}$ is a certain symmetric function with coefficients in $\mathbb{Q}(q)$ which is defined in  \cite{compositional-shuffle}.
\end{conj}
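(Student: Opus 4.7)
The plan is to prove the Compositional 4-Variable Catalan Conjecture in three stages: first establish the equivalence of the two symmetric function expressions, then use the compositional Shuffle Theorem to introduce the $z$-decoration, and finally incorporate $\Delta_{h_\el}$ as the $w$-decoration on double rises.

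\emph{Stage 1: Equivalence of the two symmetric function sides.} I would first establish the general identity
\[
\langle \nabla G, s_{k+1, 1^{m-k-1}} \rangle \;=\; \langle \Delta'_{e_{m-k-1}} G, e_m \rangle
\]
for any $G \in \Lambda^{(m)}$, then apply it to $G = \Delta_{h_\el} \cpoly_\alpha$ (noting that $\nabla$ and $\Delta_{h_\el}$ commute as they are simultaneously diagonal in the Macdonald basis). The identity itself can be derived by expanding $s_{k+1, 1^{m-k-1}} = h_{k+1} e_{m-k-1} - h_{k+2} e_{m-k-2}$ via Pieri and comparing eigenvalue expansions of $\nabla = \Delta_{e_m}$ versus $\Delta'_{e_{m-k-1}}$ on the modified Macdonald basis; the argument mirrors the hook-by-hook calculation in Subsection \ref{ssec:k=1-sym}, with an extra telescoping that collapses to the desired equality. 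Alternatively, this is a compositional strengthening of the classical equivalence between Haglund's $q,t$-Schr\"oder theorem and the hook-Catalan specializations of the Delta Conjecture.

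\emph{Stage 2: Passing to parking functions and $z$-decorations.} Apply the compositional Shuffle Theorem of Haglund, Morse, and Zabrocki (now a theorem of Carlsson and Mellit) to write
\[
\nabla \cpoly_\alpha \;=\; \sum_{P} q^{\dinv(P)} t^{\area(P)} F_{\iDes(P)},
\]
where the sum is over word parking functions $P$ of order $n-\el$ with touch composition $\alpha$. Specializing to the hook Schur function $s_{k+1, 1^{n-k-\el-1}}$ and invoking a compositional refinement of Haglund's $q,t$-Schr\"oder theorem collapses the parking functions to unlabeled Dyck paths with $k$ marked peaks (equivalently, $k$ east-steps converted to Schr\"oder diagonal steps), producing precisely the $\prod (1 + z/q^{\bi_i(D)})$-type $z$-decoration used in $\catmod_{n,\alpha}(q,t,z)$.

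\emph{Stage 3: The $\Delta_{h_\el}$-operator as double-rise decoration.} The final stage is to show that applying $\Delta_{h_\el}$ produces the $w$-decoration on $\el$ double rises with weight $w/t^{\areai_i(D)}$, while respecting the touch composition $\alpha$. For the non-compositional case this has been carried out by Zabrocki, whose plethystic and LLT-based techniques should serve as a blueprint. I would verify that Zabrocki's argument passes to the compositional refinement, either directly via his formulas or by combining it with the compositional recursion coming from the Dyck path algebra of Carlsson-Mellit.

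The main obstacle is Stage 3. Even granted Zabrocki's non-compositional result, fixing the touch composition is delicate because the double-rise decorations can be placed in rows whose area interacts nontrivially with the positions where the path returns to the diagonal. One must show that the combinatorial description of $\Delta_{h_\el}$ does not shuffle the touchpoints, so that the decorated Dyck paths counted by $\catmod_{n,\alpha}(q,t,z)|_{z^k}$ are in bijection with those produced by the symmetric function side. I expect a clean proof to come from a recursive approach: establish a compositional recursion satisfied by both sides on the statistic $\alpha$, reducing to the base cases of single-touchpoint paths handled by Stage 2 combined with the known $k=0$ or $\el=0$ specializations.
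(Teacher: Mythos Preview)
The statement is labeled a \emph{conjecture} in the paper, and the paper does not prove it in full. What the paper \emph{does} prove is the second equality---that the two symmetric function expressions agree---as Proposition~\ref{prop:cat-sym}; the first equality (combinatorics $=$ symmetric function) is explicitly attributed to subsequent work of Zabrocki. So only your Stage~1 can be compared against anything in the paper; your Stages~2 and~3 go beyond what the paper attempts, and indeed your Stage~3 ultimately bottoms out in ``invoke Zabrocki,'' which is exactly what the paper does.

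On Stage~1 specifically, your route (Pieri-expand the hook Schur function as $h_{k+1}e_{m-k-1}-h_{k+2}e_{m-k-2}$ and telescope eigenvalues) is workable but considerably more laborious than the paper's argument. The paper's proof of Proposition~\ref{prop:cat-sym} is essentially two lines: it uses the classical identity (Macdonald, p.~362)
\[
\langle \tilde H_\mu, s_{k+1,1^{m-k-1}}\rangle \;=\; e_{m-k-1}[B_\mu(q,t)-1],
\]
together with $\langle \tilde H_\mu, s_m\rangle = 1$, to rewrite $\langle \Gamma\nabla f, s_{k+1,1^{m-k-1}}\rangle$ as $\langle \Gamma\Delta'_{e_{m-k-1}}\nabla f, s_m\rangle$, and then observes this equals $\langle \Gamma\Delta'_{e_{m-k-1}} f, e_m\rangle$. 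No Pieri expansion or telescoping is needed; the hook-Schur coefficient of $\tilde H_\mu$ \emph{is} already the eigenvalue of $\Delta'_{e_{m-k-1}}$. Your approach would reprove this Macdonald identity by hand, which is fine but redundant. Also note the paper's version is stated for a general eigenoperator $\Gamma$, so the commutation of $\nabla$ with $\Delta_{h_\el}$ that you invoke is subsumed rather than used separately.
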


All of this is quite different in the case of the Delta Conjecture, where replacing $e_n$ with $E_{n, m}$ or $\cpoly_{\alpha}$ does not necessarily yield a symmetric function whose coefficients are polynomials in $q$ and $t$. We note that the two symmetric function components in each of the above conjectures are equal. 

\begin{prop}
\label{prop:cat-sym}
For integers $m \geq k > 0$, a symmetric function $f \in \Lambda^{(m)}$, and any operator $\Gamma$ defined by $\Gamma \tilde{H}_{\mu} = g_{\mu} \tilde{H}_{\mu}$ for some $g_{\mu} \in \mathbb{Q}(q,t)$, we have
\begin{align}
\label{cat-sym}
\left\langle \Gamma \nabla f, s_{k+1, 1^{m-k-1}} \right\rangle &= \left\langle \Gamma \Delta^{\prime}_{e_{m-k-1}} f, e_{m} \right\rangle . 
\end{align} 
\end{prop}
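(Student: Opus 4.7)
The plan is to diagonalize everything in the modified Macdonald basis. By linearity in $f$, we may assume $f = \tilde H_\mu$ for some $\mu \vdash m$. Since $\Gamma$, $\nabla$, and $\Delta'_{e_{m-k-1}}$ all act on $\tilde H_\mu$ by the scalars $g_\mu$, $T_\mu$, and $e_{m-k-1}[B_\mu - 1]$ respectively, the asserted identity \eqref{cat-sym} reduces to showing
\begin{align}
T_\mu \, \langle \tilde H_\mu, s_{k+1, 1^{m-k-1}} \rangle = e_{m-k-1}[B_\mu - 1] \, \langle \tilde H_\mu, e_m \rangle
\end{align}
for each $\mu \vdash m$. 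So the whole problem collapses to evaluating two particular modified Kostka-Macdonald coefficients.

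The main technical step is to prove the hook evaluation
\begin{align}
\tilde K_{(m-r, 1^r), \mu}(q,t) = e_r[B_\mu - 1] \qquad (0 \le r \le m-1),
\end{align}
which I would derive by computing $\tilde H_\mu[1-z]$ in two ways. On the one hand, the classical plethystic identity $\tilde H_\mu[1-z] = \prod_{c \in \mu}(1 - z\, T_c)$ (where $T_c = q^{a'(c)}t^{\ell'(c)}$) together with $T_{(0,0)} = 1$ gives
\begin{align}
\frac{\tilde H_\mu[1-z]}{1-z} = \prod_{c \ne (0,0)}(1 - z\, T_c) = \sum_{r \ge 0}(-z)^r e_r[B_\mu - 1].
\end{align}
On the other hand, expanding $\tilde H_\mu = \sum_{\lambda \vdash m} \tilde K_{\lambda,\mu} s_\lambda$ and using the well-known facts that $s_\lambda[1-z] = 0$ unless $\lambda$ is a hook and $s_{(a, 1^b)}[1-z] = (-z)^b(1-z)$ yields
\begin{align}
\frac{\tilde H_\mu[1-z]}{1-z} = \sum_{r=0}^{m-1} (-z)^r \tilde K_{(m-r, 1^r), \mu}(q,t).
\end{align}
Matching coefficients of $(-z)^r$ gives the hook evaluation.

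To finish, specialize this formula twice. Taking $r = m-k-1$ gives $\langle \tilde H_\mu, s_{k+1, 1^{m-k-1}} \rangle = e_{m-k-1}[B_\mu - 1]$. Taking $r = m-1$ gives $\langle \tilde H_\mu, e_m \rangle = e_{m-1}[B_\mu - 1] = \prod_{c \ne (0,0)} T_c = T_\mu$, using $T_{(0,0)} = 1$. Substituting into the reduced identity, both sides become $g_\mu \, T_\mu \, e_{m-k-1}[B_\mu - 1]$, so they agree. I do not anticipate a serious obstacle; the plethystic identity for $\tilde H_\mu[1-z]$ and the hook-Schur principal specialization $s_{(a,1^b)}[1-z]$ are both standard, and everything else is bookkeeping. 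The one thing worth flagging is that the degenerate case $k=m$ must be handled by the convention that both $\Delta'_{e_{-1}}$ and $s_{m+1, 1^{-1}}$ are zero, making the identity trivially $0 = 0$.
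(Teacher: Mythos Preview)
Your proof is correct and follows essentially the same route as the paper's. Both arguments reduce to the identities $\langle \tilde H_\mu, s_{k+1,1^{m-k-1}}\rangle = e_{m-k-1}[B_\mu - 1]$ and $\langle \tilde H_\mu, e_m\rangle = T_\mu$; the paper simply cites Macdonald's book for the hook evaluation, whereas you derive it from the specialization $\tilde H_\mu[1-z] = \prod_{c\in\mu}(1 - z\,q^{a'(c)}t^{\ell'(c)})$, which is a standard and valid way to obtain it.
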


\begin{proof}
The scalar product of ${\tilde H}_{\mu}$ with $s_{k+1,1^{m-k-1}}$ is  $e_{m-k-1} [B_{\mu}(q,t)-1]$, as proved on p.\ 362 of \cite{macdonald}. Since the scalar product of ${\tilde H}_{\mu}$ with $s_m$ is 1 for all $\mu \vdash m$, the left-hand side of our statement can be written as
\begin{align}
\langle \Gamma \Delta ^{\prime}_{e_{m-k-1}} \nabla f, s_m \rangle .
\end{align}
which can also be expressed as $\langle \Gamma \Delta ^{\prime} f, e_m \rangle$.
\end{proof}

Setting $m = n- \el$ and $\Gamma = \Delta_{h_\el}$ shows the desired equality of symmetric functions in the various conjectures in this section. 

Mike Zabrocki has recently proved Conjecture \ref{conj:comp-cat} \cite{zabrocki-catalan}. Zabrocki's result implies that the right-hand side of \eqref{touchpoint1} equals \eqref{touchpoint2} and \eqref{touchpoint3} and that the right-hand side of \eqref{cat1} equals \eqref{cat2} and \eqref{cat3}. The other equalities in our conjectures are still open.

\subsection{Conjectures for $\Delta_{h_\el} \Delta^{\prime}_{e_{n-k-1}} e_n$ and $\Delta_{h_\el} \nabla E_{n, r}$}

In this subsection we give combinatorial conjectures for the symmetric functions $\Delta_{h_\el} \Delta^{\prime}_{e_{n-k-1}} e_n$ and $\Delta_{h_\el} \nabla E_{n, r}$. We begin by defining our objects, which we call \emph{partially labeled Dyck paths} and denote $\pldyck_{n, \el}$. Given positive integers $n$ and $\el$, each element of $\pldyck_{n, \el}$ is a Dyck path of order $n+\el$ such that $n$ of its north steps are labeled with positive integers according to the following rules:
\begin{itemize}
\item if two labels share a column, the lower label is strictly smaller than the upper label (the usual rule for labeled Dyck paths), and
\item all of the north steps that do not receive a label are \emph{valleys}, i.e.\ they are north steps preceded by east steps.
\end{itemize}
In particular, the first north step must receive a nonzero label. Note that these objects cannot all be obtained by inserting ``empty'' valleys into labeled Dyck paths of order $n$, since this process forces a relationship between two labels separated by an empty valley (and we do not insist on any such relationship). We have drawn an example object in Figure \ref{fig:blank}.

\begin{figure}
\begin{center}
\begin{tikzpicture}
\draw[step=0.5cm, gray, very thin] (0,0) grid (4,4);
\draw[ gray, very thin] (0,0) -- (4,4);
\draw[blue, very thick] (0,0) -- (0,1) -- (0.5,1) -- (0.5,1.5) -- (1.5,1.5) -- (1.5, 2) -- (2, 2) -- (2,3.5) -- (3,3.5) -- (3,4) -- (4,4);

\node at (0.25,0.25) {2};
\node at (0.25,0.75) {4};
\node at (1.75,1.75) {3};
\node at (2.25,2.75) {1};
\node at (2.25,3.25) {5};
\node at (3.25,3.75) {6};

\end{tikzpicture}
\end{center}
\caption{An example $P \in \pldyck_{6, 2}$ with $\area(P) = 6$ and $\dinv(P) = 6$. The dinv occur in row pairs $(1, 4)$, $(2, 4)$, $(2, 5)$, $(2, 8)$, $(3, 6)$, and $(6, 8)$.}
\label{fig:blank}
\end{figure}
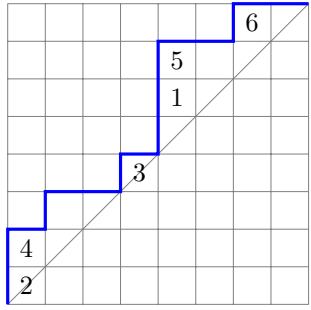

Given $P \in \pldyck_{n, \el}$, we will define statistics $\area(P)$ and $\dinv(P)$. $\area(P)$ is simply the area of the underlying Dyck path. To define $\dinv$, we simply label all the unlabeled north steps with 0's and then compute dinv as usual. That is, if $a$ is the vector that gives the area of each row and $\el_i$ is the label in row $i$ (which contains 0's in the formerly unlabeled rows), then we count the number of pairs $i < j$ such that
\begin{itemize}
\item $a_i = a_j$ and $\el_i < \el_j$, or
\item $a_i = a_j + 1$ and $\el_i > \el_j$.
\end{itemize}
Now we can make our first conjecture.

\begin{conj}
\label{conj:eh}
\begin{align}
\Delta_{h_\el} \Delta^{\prime}_{e_{n-k-1}} e_n &= \left. \sum_{P \in \pldyck_{n, \el}} q^{\dinv(P)} t^{\area(P)} x^P \prod_{i : a_{i}(P) > a_{i-1}(P)} \left(1 + z t^{-a_i(P)} \right) \right|_{z^k} .
\end{align}
\end{conj}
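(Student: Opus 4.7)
The plan is to reduce Conjecture \ref{conj:eh} to the Rise Version of the Delta Conjecture (the $\ell=0$ case) by interpreting $\Delta_{h_{\ell}}$ combinatorially as an ``unlabeled-valley insertion.'' The starting point is the plethystic identity
\begin{align}
h_{\ell}[B_{\mu}(q,t)] &= \sum_{S} \prod_{c \in S} q^{a^{\prime}(c)} t^{\el^{\prime}(c)},
\end{align}
where $S$ ranges over size-$\ell$ multisubsets of the cells of $\mu$. Combined with the Macdonald expansion of $\Delta^{\prime}_{e_{n-k-1}} e_n$, this exhibits $\Delta_{h_{\ell}}\Delta^{\prime}_{e_{n-k-1}} e_n$ as a weighted sum over $(\text{data counted by the Delta Conjecture}) \times (\text{multisets of } \ell \text{ cells})$. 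The goal is to match the multiset factor with the $\ell$ unlabeled valleys of a partially labeled Dyck path.

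Concretely, I would first construct a weight-preserving bijection
$\pldyck_{n,\ell} \longrightarrow \ldyck_n \times M_{\ell}$,
where $M_{\ell}$ is an auxiliary set of ``cell data'' built so that the product of cell weights reproduces $h_{\ell}[B_{\mu}(q,t)]$ after the Macdonald expansion. The natural candidate reads a path $P \in \pldyck_{n,\ell}$ from bottom to top, and for each unlabeled valley in row $i$ records the pair $(\areai_i(P), \dinvi_i(P))$, then contracts that valley by deleting the preceding east step. Because unlabeled north steps are forced to be valleys, this contraction always produces a valid fully labeled path. The decorations $\prod_{i:\,a_i>a_{i-1}}(1+zt^{-a_i(P)})$ survive the contraction intact, so the coefficient of $z^k$ is preserved. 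The second step would then invoke the Rise Version of the Delta Conjecture term-by-term in the Schur/Macdonald expansion.

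The main obstacle is the $\dinv$ bookkeeping: $\dinv$ depends globally on the entire area vector and all labels, so contracting an unlabeled valley in row $i$ can alter secondary diagonal inversions involving rows $i-1$ and $i+1$ in a nontrivial way. Ensuring that the recorded weight $q^{\dinvi_i(P)} t^{\areai_i(P)}$ exactly equals $q^{a^{\prime}(c)} t^{\el^{\prime}(c)}$ for a well-defined cell $c \in \mu$ is delicate, particularly because the correspondence between Dyck-path combinatorics and cells of $\mu$ is not direct but mediated by the Macdonald basis. A natural preliminary sanity check would be to verify the $q=0$, $t=0$, $q=t=1$, and $t=1/q$ specializations (adapting Theorems \ref{thm:zero} and \ref{thm:t=1/q}), and to prove the $\ell=0$, $k=1$ case using the LLT argument of Section \ref{sec:k=1}.

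A more robust alternative, which I expect would ultimately be necessary for the general case, is to extend the Carlsson--Mellit Dyck path algebra \cite{carlsson-mellit} by an operator $d_{\ell}$ realizing $\Delta_{h_{\ell}}$ on the symmetric-function side and inserting $\ell$ unlabeled valleys on the combinatorial side. Verifying that $d_{\ell}$ satisfies commutation relations compatible with the existing Carlsson--Mellit generators, and that it implements the eigenvalue $h_{\ell}[B_{\mu}(q,t)]$, would yield Conjecture \ref{conj:eh} simultaneously with a proof of the Rise Version of the Delta Conjecture. Establishing these relations (in particular that $d_{\ell}$ intertwines correctly with the plus/minus and $d_{+}$ operators of Carlsson--Mellit) would be the heart of the argument.
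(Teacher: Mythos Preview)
The paper does not contain a proof of this statement: it is explicitly stated as a \emph{conjecture} (Conjecture~\ref{conj:eh}) and left open. There is therefore no ``paper's own proof'' to compare against. What you have written is not a proof but a sketch of two possible research programs, and you correctly flag the key obstructions yourself.

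On the substance of your first approach: the reduction you propose cannot work as stated. You want to factor each term of the combinatorial side as (a term from the Rise Version) $\times$ (a cell weight $q^{a'(c)}t^{\ell'(c)}$), and then match the product of cell weights with $h_{\ell}[B_{\mu}(q,t)]$ after expanding into the Macdonald basis. But the Rise Version of the Delta Conjecture does not give a bijection between labeled Dyck paths and anything indexed by partitions $\mu$; it equates two symmetric functions, one defined as a sum over paths and the other as a sum over $\mu$. There is no known term-by-term correspondence that would let you attach a specific partition $\mu$ (and hence a specific cell set) to a given labeled Dyck path $P$. So the phrase ``invoke the Rise Version of the Delta Conjecture term-by-term in the Schur/Macdonald expansion'' hides exactly the step that does not exist. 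Even granting the Delta Conjecture as a black box, your contraction bijection would at best show that the combinatorial side of Conjecture~\ref{conj:eh} equals $\sum_{P \in \ldyck_n}(\cdots)$ times some generating function for the recorded valley data, and there is no reason this extra factor should equal $h_{\ell}[B_{\mu}]$ on the nose, since $\mu$ never enters the path side.

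Your second approach---building an operator $d_{\ell}$ in a Carlsson--Mellit-style algebra that realizes $\Delta_{h_{\ell}}$---is the more honest description of what a proof would require, and is in the spirit of how the Delta Conjecture itself was eventually attacked. But as you acknowledge, this is a program, not a proof: the commutation relations you would need are not established anywhere, and identifying them is the entire problem.
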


In the case where $k=0$, we can make a more refined conjecture involving the returns of $P$ to the diagonal. We say that $\touch(P)$ is equal to the number of rows $i$ with $a_i = 0$ that are not unlabeled valleys. For example, the object in Figure \ref{fig:blank} has $\touch(P) =  2$. 

\begin{conj}
\label{conj:eh-touch}
\begin{align}
\Delta_{h_\el} \nabla E_{n, r} &= \sum_{\substack{P \in \pldyck_{n, \el} \\ \touch(P) = r}} q^{\dinv(P)} t^{\area(P)} x^P .
\end{align}
\end{conj}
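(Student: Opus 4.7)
The plan is to reduce the conjecture to the compositional Shuffle Theorem of Carlsson and Mellit and then lift it through an analysis of how $\Delta_{h_\el}$ acts on touchpoint-graded parking-function generating functions. First, the $\el = 0$ case is exactly the compositional Shuffle Theorem: since $\Delta_{h_0} = \mathrm{id}$ and $\pldyck_{n,0} = \ldyck_n$ (no unlabeled valleys are allowed when $\el=0$), the statement becomes
$$\nabla E_{n,r} = \sum_{P \in \ldyck_n,\; \touch(P) = r} q^{\dinv(P)} t^{\area(P)} x^P,$$
which is precisely what Carlsson and Mellit establish in \cite{carlsson-mellit}. I would take this as the base case.

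For the inductive step I would introduce an \emph{unlabeled valley insertion} map: given $P' \in \pldyck_{n,\el-1}$, locate each position where a new unlabeled valley (a north step preceded by an east step and carrying no label) can legally be inserted without changing $\touch$; compute the effect of each such insertion on $\dinv$ and $\area$. The task is to package the total weighted contribution, summed over insertion sites, as an explicit operator on symmetric functions agreeing with $\Delta_{h_\el} \Delta_{h_{\el-1}}^{-1}$ applied to $\nabla E_{n,r}$. This would close the induction. Alternatively, one can work entirely within the plethystic operator algebra generated by Carlsson--Mellit's $D_+, D_-, T_i$: since $h_\el[B_\mu(q,t)]$ is a sum over multisets of $\el$ cells of $\mu$, and since cells of $\mu$ correspond combinatorially to building blocks in their recursive construction of parking functions, $\Delta_{h_\el}$ ought to admit a direct combinatorial action which inserts $\el$ ``ghost cars'' that manifest as the unlabeled valleys of $\pldyck_{n,\el}$.

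The main obstacle is that $\Delta_{h_\el}$ does not factor as a product of simpler $\Delta$-operators with positive combinatorial action: expanding $h_\el$ in the $e_\lambda$ or $p_\lambda$ basis introduces cancellations that obscure positivity, and the identity $\Delta_{h_\el} \ne \Delta_{h_1}^\el$ rules out the most naive iteration. A plausible workaround is to first prove Conjecture \ref{conj:eh}, whose rise-factor product meshes naturally with the LLT framework developed in Section \ref{sec:k=1} and with the leaning-stack interpretation of Subsection \ref{ssec:stacks}; then the compositional refinement given by Conjecture \ref{conj:eh-touch} should follow by tracking the decomposition $e_n = \sum_r E_{n,r}$ through the proof, using the fact that $E_{n,r}$ isolates Dyck paths with exactly $r$ returns to the diagonal and that unlabeled-valley insertion preserves this touch structure by construction. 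Verifying that the touchpoint grading is respected at the symmetric function level is expected to be the subtlest point of the argument.
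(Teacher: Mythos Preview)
The paper does not prove this statement: it is stated as Conjecture~\ref{conj:eh-touch} and left open. There is therefore no proof in the paper to compare your proposal against.

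Viewed on its own, your proposal is a research outline rather than a proof. The base case $\el=0$ is indeed the (compositional) Shuffle Theorem of Carlsson--Mellit, and the observation that $\pldyck_{n,0}=\ldyck_n$ with $\touch$ reducing to the ordinary return count is correct. But the inductive step is only \emph{described}, not carried out: you say the task is to package the total weight of all unlabeled-valley insertions as an operator equal to $\Delta_{h_\el}\Delta_{h_{\el-1}}^{-1}$ on $\nabla E_{n,r}$, and then stop. That identity is exactly the content of the conjecture and is not known; you yourself flag that $\Delta_{h_\el}$ does not factor through iterates of $\Delta_{h_1}$ or through any operator with an evident positive combinatorial action, so the naive induction has no mechanism to close. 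The alternative route you suggest---prove Conjecture~\ref{conj:eh} first and then refine by touchpoints---simply relocates the difficulty to another open conjecture in the same paper; nothing in Section~\ref{sec:k=1} or the leaning-stack material of Subsection~\ref{ssec:stacks} gives a handle on $\Delta_{h_\el}$ for general~$\el$.

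In short: your $\el=0$ reduction is sound, but everything beyond it is a wish list of lemmas whose truth is equivalent to the conjecture itself.
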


At this point, it is unclear how to refine these conjectures to allow for specific touch compositions.

It is worth noting that there is another dinv statistic that can replace the above definition. This dinv is more clearly related to the conjectures earlier in this section.
Given an entry $P \in \pldyck_{n,\el}$, we form a vector $d^{\prime}(P)$ defined by
\begin{itemize}
\item if row $i$ is empty, $d^{\prime}_i = -1$, else
\item $d^{\prime}$ is equal to the number of $j>i$ such that either 
\begin{itemize}
\item $a_i = a_j$ and row $j$ is empty,
\item $a_i = a_j + 1$ and row $j$ is empty, or
\item row $j$ is not empty and the two rows form a dinv pair by the usual definition for labeled Dyck paths.
\end{itemize}
\end{itemize}
Let $\dinv^{\prime}$ be the sum of the entries in the vector $d^{\prime}$. Computations suggest that we can replace $\dinv$ with $\dinv^{\prime}$ in the above conjectures. Then taking the scalar product with $e_n$ recovers special cases of the Catalan conjectures in Subsection \ref{ssec:cat}.

%

\section{Open problems}
\label{sec:final}

In this section, we describe a few open problems related to the Delta Conjecture. These accompany the problems of studying the $\minimaj$ statistic, mentioned at the end of Section \ref{sec:osp}, and of proving the extended conjectures described in Section \ref{sec:extensions}.

\subsection{Schr\"oder paths and 1,2-labeled Dyck paths}
In \cite{wilson-thesis}, the third author develops recursions for the polynomials  $\langle \risepoly_{n,k}(x;q,t), f \rangle$ and $\langle \valpoly_{n,k}(x;q,t), f \rangle$ for symmetric functions $f$ of the form $h_d e_{n-d}$ or $h_d h_{n-d}$. He then uses these recursions along with various results about $q$-binomial coefficients to resolve the Delta Conjecture in the case where we take the scalar product with $f$ on both sides and set $t=1/q$. It would be nice to remove the $t=1/q$ condition for this result, since this would yield a complete analog of the first author's results on the Shuffle Conjecture in \cite{schroder}. The main obstruction at this point is the symmetric function side.

\begin{prob}
Find recursions for the polynomials $\langle \Delta_{e_k} e_n, h_d e_{n-d} \rangle$ and \\ $\langle \Delta_{e_k} e_n, h_d h_{n-d} \rangle$ along the lines of the recursions obtained for the $k=n$ case in \cite{schroder}.
\end{prob}

\subsection{Symmetry of $\valpoly_{n,k}(x;q,t)$}
We mentioned in Subsection \ref{ssec:k=1-sym} that, due to the connection with LLT polynomials, we know that $\risepoly_{n,k}(x;q,t)$ is symmetric.  On the other hand, we have no such result for $\valpoly_{n,k}(x;q,t)$. In fact, we have observed that restricting the definition of $\valpoly_{n,k}(x;q,t)$ to labelings of a fixed Dyck path does \emph{not} always yield a symmetric function, which is in stark contrast to the $\risepoly_{n,k}(x;q,t)$ case. This implies that the following problem could be quite difficult. 

\begin{prob}
Prove $\valpoly_{n,k}(x;q,t)$ is a symmetric function, possibly by connecting it to (generalized?) LLT polynomials.
\end{prob}

The only partial results we have in this direction are that $\valpoly_{n,k}(x;q,0)$ (due to Proposition \ref{prop:omp}) and $\valpoly_{n,k}(x;1,t) = \risepoly_{n,k}(x;1,t)$ are symmetric. 

\subsection{Generalizations}
There are various ways one could generalize the Shuffle Conjecture. For example, we could replace $e_{k}$ by a general elementary symmetric function $e_{\lambda}$ or even a general symmetric function $f$. This would generalize the Fuss extension of the Shuffle Conjecture \cite{shuffle}. Alternatively, one could replace $e_n$ with $p_n$, which would hopefully have some relationship to the set of all lattice paths from $(0,0)$ to $(n,n)$, as developed for the Shuffle Conjecture in \cite{square}. We would also like to develop a concrete connection between the delta operator and the Rational Shuffle Conjectures of \cite{rational-shuffle} apart from the $t=1/q$ result mentioned in Section \ref{sec:t=1/q}. Finally, it would be quite interesting if one could find an extension of the module of diagonal harmonics with Frobenius characteristic equal to $\Delta_{e_k} e_n$. Brendon Rhoades, Mark Shimozono, and the first author are currently exploring a promising module for the one variable case (i.e.\ at $t=0$) which also generalizes the classical module of coinvariants of the symmetric group.

\subsection{Towards a proof of the Delta Conjecture}
Finally, we would be remiss if we did not mention the recent preprint of Carlsson and Mellit \cite{carlsson-mellit} which contains a proof of the Compositional Shuffle Conjecture, and therefore the $k=n-1$ case of the Delta Conjecture. It is quite possible that their proof could be adjusted to prove the Delta Conjecture, although it seems like this adjustment must be nontrivial. We are investigating ways to generalize the key recursions in \cite{carlsson-mellit} in order to apply them to the Rise Version of the Delta Conjecture. At the very least, this is a promising development towards a proof of our Delta Conjecture.


\appendix
\section{Completing the proof of Proposition \ref{prop:omp} }
\label{app:minimaj}

In this appendix, we prove the following statement, which appears as \eqref{omp-minimaj} in Proposition \ref{prop:omp}:
\begin{align}
\left. \valpoly_{n,k}(x;0,q) \right|_{M_{\alpha}}  &= \sum_{\pi \in \osp{\alpha}{k+1}} q^{\minimaj(\pi)} .
\end{align}
We will define a map
\begin{align}
\gamma_{\alpha, k} : \osp{\alpha}{k+1} \to \{ P \in \ldyck^{\dense}_{n,k} : \winv(P) = 0, x^P = \prod_{i=1}^{\ell(\alpha)} x_i^{\alpha_i} \} .
\end{align}
Then we will prove that this map is a bijection which satisfies $\area(\gamma_{\alpha,k}(\pi)) = \minimaj(\pi)$. Given $\pi \in \osp{\alpha}{k+1}$, we consider the permutation $\tau = \tau(\pi)$ as in the definition of $\minimaj$. Let $T$ be the positions of $\tau$ of the entries which are minimal in their blocks in $\pi$. We define the \emph{runs} of $\tau$ to be its maximal, contiguous, weakly increasing sequences. For convenience, we label the runs from right to left, saying that the rightmost run is the $0$th run. Say that $\tau$ has $s$ runs, and define positive integers $n = r_0 > r_1 > \ldots > r_s = 0$ such that the $i$th run of $\tau$ is equal to $\tau_{r_{i+1}+1}\ldots\tau_{r_{i}}$. Define $b^{i}_{1} < \ldots < b^{i}_{p_i}$ to be the positions of entries in the $i$th run of $\tau$ which are the leftmost entries in blocks which are entirely contained in the $i$th run of $\tau$. Finally, for each $i<s-1$, set $b^{i}_{0}$ to be the position of the leftmost entry in $\tau$ which shares a block with $\tau_{r_{s-i}}$. 

For example, set $\pi  = 13|23|14|234$ with $\tau = 312341234$. We decorate $\tau$ with bars after its minimal elements to obtain $31|23|41|234$. $\tau$ has 3 runs with $r_3 = 0$, $r_2=1$, $r_1=5$, and $r_4=9$. Using dashes to separate the runs, we get $3 - 1|23|4 -1|234$. We compute $b^0_1 = 7$, $b^0_0 = 5$, $b^1_1 = 3$ and $b^1_0 = 1$. Since the leftmost run does not contain any blocks, there are no $b^2_j$'s. 

We define $\gamma_{\alpha,k}(\pi)$ as follows. For $i = 0$ to $s-1$, we will insert the elements of the $i$th run of $\tau$ such that their rows in $P$ each have area equal to $i$. After each $i$, we will obtain a \emph{partial densely labeled Dyck path} $P^{(i+1)}$, which is densely labeled Dyck path whose set of labels does not necessarily form a composition. We begin with the empty densely labeled Dyck path $P^{(0)}$. To create $P^{(1)}$, we begin with the Dyck path $(NE)^{p_0}$. We label the squares from top to bottom with the sets $\tau_{b^{0}_{1}}\ldots\tau_{b^{0}_{2}-1}$, $\tau_{b^{0}_{2}}\ldots\tau_{b^{0}_{3}-1}$, \ldots, $\tau_{b^{0}_{p_{0}}}\ldots\tau_{n}$. Now we insert the entries $\tau_{b^0_0}\ldots\tau_{b^0_{1}-1}$ in a slightly more complicated fashion. We find the maximum entry in the northernmost square which is less than $\tau_{b^0_0}$; call this element $c$. By the definition of $\tau$, such a $c$ must exist. We insert a north step and then an east step immediately after the north step adjacent to this northernmost square. The new north square receives the label $\tau_{b^0_0}\ldots\tau_{r_1}$. The new east square's label contains $\tau_{r_1+1}\ldots\tau_{b^0_{1}-1}$ along with the entries in $c$'s square which are greater than $c$. In other words, we move these entries from $c$'s square to the new east square. The result is $P^{(1)}$. We can check $\winv(P^{(1)}) = 0$.

For greater values of $i$, we ``repeat'' this process as follows. We repeatedly insert $\tau_{b^i_{j}}\ldots\tau_{b^i_{j+1}-1}$ for $j = p_i$ down to $1$ just above the last east step added above. We leave the labels $\tau_{r_{i}+1}\ldots\tau_{b^{i-1}_{1}-1}$ in their east square and push the labels that were originally in $c$'s square that are greater than $c$ so that they are always in the highest east square with area equal to $i-1$. Then we find the maximum entry $c$ in the northernmost square with area $i$ such that $c < \tau_{b^i_0}$ and add new north and east squares as described above. The only remaining case to consider is if there is no $b^i_0$; then the new east squares label is just the entries in $c$'s square which are greater than $c$. We produce an example in Figure \ref{fig:gamma-map}.

\begin{figure}
\begin{tikzpicture}
\draw[step=0.6cm, gray, very thin] (-0.001,0) grid (0.6,0.6);
\draw[gray, very thin] (0,0) -- (0.6, 0.6);
\draw[blue, very thick] (0,0) -- (0,0.6) -- (0.6,0.6);
\node at (0.3, 0.3) {234};

\draw[step=0.6cm, gray, very thin] (1.199,0) grid (2.4, 1.2);
\draw[gray, very thin] (1.2,0) -- (2.4,1.2);
\draw[blue, very thick] (1.2,0) -- (1.2,1.2) -- (2.4,1.2);
\node at (1.5,0.3) {23};
\node at (1.5,0.9) {4};
\node at (2.1,0.9) {14};

\draw[step=0.6cm, gray, very thin] (2.999,0) grid (4.8,1.8);
\draw[gray, very thin] (3,0) -- (4.8,1.8);
\draw[blue, very thick] (3,0) -- (3,1.2) -- (3.6,1.2) -- (3.6,1.8) -- (4.8,1.8);
\node at (3.3,0.3) {23};
\node at (3.3,0.9) {4};
\node at (3.9,0.9) {1};
\node at (3.9,1.5) {23};
\node at (4.5,1.5) {4};

\draw[step=0.6cm, gray, very thin] (5.399,0) grid (7.8,2.4);
\draw[gray, very thin] (5.4,0) -- (7.8,2.4);
\draw[blue, very thick] (5.4,0) -- (5.4,1.2) -- (6,1.2) -- (6,2.4) -- (7.8,2.4);
\node at (5.7,0.3) {23};
\node at (5.7,0.9) {4};
\node at (6.3,0.9) {1};
\node at (6.3,1.5) {2};
\node at (6.3,2.1) {3};
\node at (6.9,2.1) {13};
\node at (7.5,2.1) {4};

\end{tikzpicture}
\caption{We compute $\phi_{(2,2,3,2),3}(13|23|14|234)$. From left to right, we depict $P^{(1)}$, $P^{(2)}$, $P^{(3)}$, and finally $P^{(4)}$.}
\label{fig:gamma-map}
\end{figure}
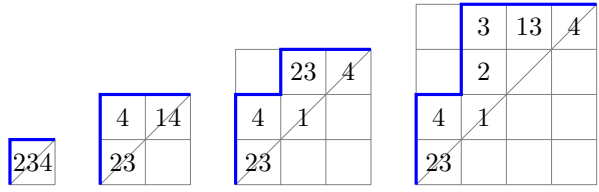

We note that, at each step, we have introduced zero $\winv$, so $\gamma_{\alpha,k}$ indeed maps to the paths
\begin{align}
\{ P \in \ldyck^{\dense}_{n,k} : \winv(P) = 0, x^P = \prod_{i=1}^{\ell(\alpha)} x_i^{\alpha_i} \} .
\end{align}
To see that $\gamma_{\alpha,k}$ is injective, we construct its inverse. We begin with the squares at maximum area in $P$. We remove them from top to bottom, using their labels to construct the blocks in the leftmost run in $\tau$. When we only have one square remaining at that area, we remove that square and form a block that consists of the labels in that square along with the smaller labels in the east square just to the right of that square (if there are any such labels). Then we move the larger labels into the north square below the square we just removed. We continue at the next largest area until all squares have been removed.

Next, we claim that $\gamma_{\alpha,k}$ is surjective. Carefully inspecting the image of $\gamma_{\alpha,k}$, we note that it contains any $P \in \ldyck^{\dense}_{n,k}$ with $\winv(P) = 0$ with the additional condition that every nonempty east square occurs either adjacent to the lowest two north squares with a given area or to the right of the uppermost north square at a given area. Essentially, if we see something of the form
\begin{center}
\begin{tikzpicture}
\draw[step=0.5cm, gray, very thin] (-0.001,0) grid (2,2);
\draw[gray, very thin] (0,0) -- (2,2);
\draw[blue, very thick] (0,0) -- (0,1) -- (0.5,1) -- (0.5,1.5) -- (1,1.5) -- (1,2) -- (2,2);
\node at (0.25, 0.25) {$A$};
\node at (0.25, 0.75) {$B$};
\node at (0.75, 0.75) {$C$};
\node at (0.75, 1.25) {$D$};
\node at (1.25, 1.25) {$E$};
\node at (1.25, 1.75) {$F$};
\node at (1.75, 1.75) {$G$};
\end{tikzpicture}
\end{center}
then we must have $E = \emptyset$. It only remains to show that this condition is necessary in order to have $\winv(P)=0$. If $E \not = \emptyset$, it contains some element $e$. For $f = \min(F)$ and $d = \min(D)$, we must have $e < f \leq d$, since we have zero total $\winv$, so $e < d$. In order to have zero $\winv$, $e$ cannot be involved in any more diagonal inversions. However, either $e > a = \min(A)$ or $e \leq a < b = \min(B)$, so $e$ is involved in at least one more diagonal inversion, meaning that the total $\winv$ cannot be zero. Thus we must have $E = \emptyset$. 

Finally, we need to show that $\area(\gamma_{\alpha,k}(\pi)) = \minimaj(\pi)$. By definition, $\minimaj(\pi) = \maj(\tau)$, which is equivalent to the sum 
\begin{align}
\sum_{i=0}^{s-1} i (\# \text { of elements in run $i$ in $\tau$}).
\end{align}
Since each element of the $i$th run in $\tau$ is placed in a square with area $i$ in $\gamma_{\alpha,k}(\pi)$, we have $\area(\gamma_{\alpha,k}(\pi)) = \minimaj(\pi)$.

\bibliographystyle{alpha}
\bibliography{statistics}
\label{sec:biblio}

\end{document}